\newtheorem{defi}{Definition}[section]
\newtheorem{thm}{Theorem}[section]
\newtheorem{lem}{Lemma}[section]
\newtheorem{cor}{Corollary}[section]
\newtheorem{rem}{Remark}[chapter]
\renewcommand{\sectionmark}[1]{\markright{\thesection.\ #1}}
\begin{document}
\newpage
\thispagestyle{empty}
\pagenumbering{roman}
\titlepage

\begin{titlepage}
 \begin{center}
\LARGE
\textbf{ INJECTIVITY IN HIGHER ORDER COMPLEX DOMAINS} 
\end{center}
\normalsize
$$\textbf{A Dissertation Submitted to  the  University of Delhi}$$
$$\textbf{in Partial Fulfilment of the Requirements}$$
$$\textbf{for the Award of the Degree of}$$
\large
$$\textbf{MASTER OF PHILOSOPHY}$$
\normalsize
$$\textbf{IN}$$
\large
$$\textbf{MATHEMATICS}$$
\large
\\
$$\textbf{By}$$
\large
$$\textbf{NAVEEN GUPTA}$$
\\
\vskip6em
\normalsize
$$\textbf{DEPARTMENT\,OF\,MATHEMATICS\,}$$
$$\textbf{UNIVERSITY\,OF\,\,DELHI}$$
$$\textbf{DELHI-110007}$$
$$\textbf{July 2014}$$
\end{titlepage}

\thispagestyle{empty}

\begin{center}{\textbf{DECLARATION}}
\end{center}

\vskip2em
\fontsize{13}{23}
\selectfont
\quad This dissertation entitled ``INJECTIVITY IN HIGHER ORDER COMPLEX DOMAINS'',  a critical survey of the work done by various authors in this area, has been prepared by me under the supervision of Dr. Sanjay Kumar, Associate Professor, Department of Mathematics, Deen Dayal Upadhyaya College, University of Delhi for the submission
to the University of Delhi as partial fulfilment for the award of the degree of \textbf{Master of Philosophy in Mathematics}.

I, hereby also declare that, to the best of my knowledge, the work presented in this dissertation, either in part or full, has not been submitted to any other University or elsewhere for the award of any degree
 or diploma.
\vskip3em
\noindent
\rightline{\textbf{(\,Naveen Gupta\,) \,}}
 \vskip9em
\noindent\baselineskip16pt
\textbf{Dr. Sanjay Kumar}            \hspace{5cm} \textbf{\,Prof. Ajay Kumar }\\
Supervisor  \& Associate Professor  \hspace{3cm}              Head \\
Department of Mathematics   \hspace{3.9cm}         Department of Mathematics \\
Deen Dayal Upadhyaya College \hspace{3.3cm}        University of Delhi\\
University of Delhi    \hspace{5.6cm}                        Delhi-110007                                                                                                                           

\baselineskip24pt
\newpage
\vskip4em
\thispagestyle{empty}

\begin{center}{\textbf{\Large{ACKNOWLEDGEMENTS}}}
\end{center}

\fontsize{12}{20}
\selectfont

\vskip1em
\quad It is a great pleasure for me to express intense gratitude to my M.Phil. dissertation advisor Dr. Sanjay Kumar for his keen interest in my topic of dissertation,
 guidance, encouragement and patience throughout the writing of the dissertation. Despite of his  busy schedule, he has given me enough time. I am indeed indebted to him.
 
I am delighted to take the opportunity to express my sincere thanks to Prof. Ajay Kumar, Head, Department of Mathematics, University of Delhi, for providing the necessary facilities without any delay in work. 

I would like to thank my parents for their constant support. I give my heartfelt thanks to my friends and seniors  for their timely help and the moral support they provided me during my dissertation work. I am specially thankful to Gopal Datt, Kushal Lalwani and Shelly Verma for their help and support. I am thankful to UGC for providing me fellowship.

I would like to thank  the staff of Central Science Library for being cooperative.
Finally, I thank the office staff of Department of Mathematics for being so helpful. 

\vskip3em \fontsize{12}{20} \selectfont \noindent
\;\;\;\;\;\;\hfill{\textbf{ (\,Naveen Gupta\,) \,}} \eject
\fontsize{12}{19.5}
\selectfont
\newpage
\newpage
\vskip4em
\thispagestyle{empty}

\begin{center}{\textbf{\Large{Preface}}}
\end{center}

\fontsize{12}{20}
\selectfont

\vskip1em
The aim of the dissertation is to study the \emph{injective} aspect of functions defined on $n$-dimensional complex domain. There are basically two topics discussed in the dissertation. First one is relation of injectivity with Jacobian and another one is about injectivity of Poisson integral. Both the problems are discussed for different classes of functions.

It is a well known result of complex analysis of several variables that \emph``{An analytic function is locally injective in a neighbourhood of a point if and only if its Jacobian does not vanish at that point}". By a complex valued harmonic function, we mean a complex valued function whose real and imaginary parts are harmonic. In 1936 ~\cite{lewy}, H. Lewy proved above mentioned result for complex valued harmonic functions. However, Lewy's theorem fails for harmonic functions on $n$-dimensional complex domains. In 1974, J. C. Wood disproved it in higher dimensions by giving a counterexample in his thesis ~\cite{wood1}. So, we study the same problem for a class of functions which is bigger than that of analytic functions but smaller than that of harmonic functions, namely, pluriharmonic functions. In 1976 ~\cite{naser}, M. Naser proved that Lewy's theorem holds true for $2$-dimensional complex domains for pluriharmonic functions.
Interestingly, for $n>2$ this problem is open for pluriharmonic functions.

By the Poisson integral formula, a real valued continuous function on a circle can be extended to a harmonic function. For a continuous complex valued function on circle, using Poisson integral formula for its real and imaginary parts, it can be extended to a complex valued harmonic function. Tibor Rad\v{o} posed a question about injectivity of this Poisson integral. In 1926, Hellmuth Kneser ~\cite{kneser} supplied a proof of this result. In 1945, ~\cite{choquet} Gustave Choquet, unaware of the work done by Rad\'{o} and Kneser discovered this result with a different proof. This theorem fails in higher dimensions. In 1996 ~\cite{laug}, R. S. Laugesen provided a counterexample for this.

The dissertation is divided into five chapters followed by bibliography at the end. Chapters 2 to 5 are embodiment of our study in the present dissertation. The chapterwise organization of the dissertation is as under.

\vspace*{.4cm}\textbf{Chapter 2} entitled  `` \textit{Introduction to several variable complex analysis}" provides an introduction to the theory of several variables. Some results of several variables are discussed with proof. A map on $n$-dimensional complex domain is said to be separately analytic if it is analytic in each variable separately. It is quite fascinating that a function which is seperatley analytic is analytic jointly, which is the content of Hartog's theorem. When dealing with several variables, study of Hartog's theorem on separate analyticity becomes necessary. So we have included rigorous proof of Hartog's theorem, following the treatment of H\"{o}rmander ~\cite{hor} and Paul Garret ~\cite{paul}. We conclude this chapter by giving some similarities and differences between one variable and several variable complex theory.

\vspace*{.4cm}In \textbf{Section 2.1}, we have included introduction to several variable complex theory. A series of lemmas are proved to finally prove Hartog's theorem on separate analyticity. Analogous of Cauchy's integral formula and some other results of one variable complex theory for functions of several variable are included.

\vspace*{.4cm}In \textbf{Section 2.2}, we have explained the similarities and differences between one variable complex theory and several variable complex theory.  

\textbf{Chapter 3} entitled `` \textit{Invertibility and Jacobian}", discusses relation of Jacobian of a function with its injectivity.  We begin the chapter by giving formal definition of Jacobian of a funciton and then relating it with derivative of the funciton.

\vspace*{.4cm}In \textbf{Section 3.1}, we have discussed relation of injectivity with Jacobian for the one dimensional case. We have discussed proof of Lewy's theorem given in ~\cite{lewy}.

\vspace*{.4cm}In \textbf{Section 3.2}, we have discussed work done by M. Naser ~\cite{naser}. In fact, Lewy's theorem comes out as a corollary to Naser's work.

\textbf{Chapter 4} entitled `` \textit{Injectivity of harmonic extensions}" discusses injectivity of Poisson integral. In this chapter some properties of Possion integral are discussed and Rad\'{o}-Kneser-Choquet theorem is proved.

\textbf{Chapter 5} entitled `` \textit{Counterexamples in higher dimensions}" includes counterexamples for results mentioned in chapters 3 and 4. R. S. Laugesen's counterexample is explained thoroughly. This example is also termed as ` tennis ball example', as the construction given in the example comes out to be as that of a tennis ball. J. C. Wood ~\cite{wood1}, in 1974 included a counterexample to Jacobian problem in his thesis unaware of the fact that such an example was not known at that time. In 1991, when told by P. Duren, he presented this counterexample in his paper ~\cite{wood}.\\

\tableofcontents
\newpage



 \normalsize
 \newpage
\newpage
\pagenumbering{arabic}
\thispagestyle{plain}
\chapter{Some preliminary concepts of theory of one variable}
\section{Complex Analysis}
By complex space, denoted by $\mathbb{C}$, we mean the space given by $\{a+ib:a,b\in{\mathbb{R}}\}$. Complex analysis is the study of functions defined on $\mathbb{C}$. Although $\mathbb{C}$ can be related with $\mathbb{R}^2$ under the mapping $(a+ib)\to (a,b)$, yet functions on $\mathbb{C}$ behaves totally different from function on the euclidean space $\mathbb{R}^2$. We will see all this in this section. Let us begin by recalling some of the definitions and introducing terminologies.
\begin{defi}
A subset $\Omega \subseteq \mathbb{C}$ is called a region if it is open and connected. Domain is another commonly used term for a region.
\end{defi}
\begin{defi} 
Let $\Omega \subset \mathbb{C}$ be an open set. A function $f:\Omega \to \mathbb{C}$ is said to be differentiable at a point $z_0\in {\Omega}$ if \\
$$\lim_{h\to 0}\frac{f(z_0+h)-f(z_0)}{h}$$ exists.
We say $f$ is differentiable on $\Omega$ if it is differentiable at each point of $\Omega$. We say $f$ is continuously differentiable if its derivative, denoted by $f'$, is continuous. 
\end{defi}
For a complex valued function $f$, we will denote real and imaginary parts of $f$ by $\Re f$ and $\Im f$ respectively. We say a function $f$ is of class $C^n$ if $f$ is $n$ times differentiable and the $n$th derivative is continuous.
\begin{defi}
A function $f$ is said to be injective if no two distinct elements has the same image under $f$. An injective or one-one function is also called univalent function.
\end{defi}
\begin{defi}
A function $f:\Omega \to \mathbb{C}$ is said to be analytic if it is continuously differentiable on $\Omega$.
\end{defi}
Let $f:\Omega \to \mathbb{C}$ be analytic.
Let us denote by the real and imaginary parts of $f$ by $u$ and $v$ respectively,
   $$u(x,y)=\Re f(x+i y)\ \mbox{and} \ 
v(x,y)=\Im f(x+i y).$$
At a point $z=x+i y\in{\Omega}$, $f$ is differentiable,
$\mbox{that is}$  $$\lim_{h\to 0}\frac{f(z_0+h)-f(z_0)}{h}$$ exists.
First assume that $h\to 0$ through real axis,
so we have $$\frac{f(z_0+h)-f(z_0)}{h}=\frac{u(x+h,y)-u(x,y)}{h} +i \frac{v(x+h,y)-v(x,y)}{h}.$$
Letting $h\to 0$, we get
$$f'(z)=u_x(x,y)+i v_x(x,y),$$ where $u_x$ and $v_x$ denote partial differentiation of $u$ with respect to x.\\
Now, assume $h\to 0$ through imaginary axis,
therefore we have $$\frac{f(z_0+h)-f(z_0)}{h}=(-i)\frac{u(x,y+h)-u(x,y)}{h}+\frac{v(x,y+h)-v(x,y)}{h}.$$
Letting $h\to 0$, we get
$$f'(z)=-i u_y(x,y)+v_y(x,y).$$
Comparing real and imaginary parts from both the equations,we get
$$u_x=v_y\ \mbox{and} \  u_y=-v_x,$$
these equations are called Cauchy Riemann equations.
Further differentiating, we get-
$$u_{xx}=v_{yx}\ \mbox{and} \ u_{yy}=-v_{xy},$$
which implies that  $$u_{xx}+u_{yy}=0.$$
A function $u$ satisfying this equation is called harmonic. Thus $u$ is harmonic.
In a similar way, $v$ is also harmonic.
We observe that real and imaginary parts of an analytic function are harmonic and satisfy Cauchy Riemann equations.
\begin{rem} Converse of the above statement is also true, which is stated as:
\end{rem}
\begin{thm}
Let $u,v:\Omega \subseteq \mathbb{C}\to \mathbb{C}$ be functions having continuous first order partial derivatives. Then $f:\Omega \to \mathbb{C}$ defined as 
$$f(z)=u(z)+i v(z)$$ is analytic if and only if $u,v$ satisfy Cauchy Riemann equations.
\end{thm}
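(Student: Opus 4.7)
The forward implication has essentially been established in the discussion immediately preceding the theorem: assuming $f$ is analytic, we derived the Cauchy--Riemann equations by computing the limit defining $f'(z)$ along the real and imaginary axes and equating real and imaginary parts. So my proof would open by simply pointing back to that calculation.

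The substance is the converse. The plan is to fix $z_0 = x_0 + i y_0 \in \Omega$ and show that the complex difference quotient of $f$ at $z_0$ has a limit. The key tool I would invoke is the standard real-analysis fact that a function of two real variables with continuous first-order partials on an open set is (totally) differentiable there, in the sense that
$$u(x_0+h_1, y_0+h_2) - u(x_0,y_0) = u_x(x_0,y_0)\, h_1 + u_y(x_0,y_0)\, h_2 + \varepsilon_1(h)\,|h|,$$
with $\varepsilon_1(h) \to 0$ as $h = h_1+ih_2 \to 0$, and similarly for $v$ with an error $\varepsilon_2(h)$. Adding the expansion for $u$ to $i$ times the expansion for $v$ gives
$$f(z_0+h) - f(z_0) = (u_x + i v_x)\, h_1 + (u_y + i v_y)\, h_2 + \varepsilon(h)\,|h|,$$
where $\varepsilon = \varepsilon_1 + i \varepsilon_2 \to 0$ and all partials are evaluated at $(x_0,y_0)$.

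Now I would use the Cauchy--Riemann equations to collapse the two real increments $h_1, h_2$ into the single complex increment $h$. Indeed, $u_y = -v_x$ and $v_y = u_x$ give $u_y + i v_y = -v_x + i u_x = i(u_x + i v_x)$, so the expansion becomes
$$f(z_0+h) - f(z_0) = (u_x + i v_x)(h_1 + i h_2) + \varepsilon(h)\,|h| = (u_x + i v_x)\, h + \varepsilon(h)\,|h|.$$
Dividing by $h$ and letting $h \to 0$, the error term $\varepsilon(h)\,|h|/h$ vanishes because $|\,|h|/h\,| = 1$ while $\varepsilon(h) \to 0$, so the complex derivative exists with $f'(z_0) = u_x(x_0,y_0) + i v_x(x_0,y_0)$. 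Continuity of $f'$ on $\Omega$ follows from the assumed continuity of $u_x$ and $v_x$, so $f$ is analytic in the sense of the paper.

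The only genuinely non-trivial point is the appeal to total real differentiability from continuous partials; everything else is algebraic manipulation of the Cauchy--Riemann relations. I would state that fact explicitly and, if a self-contained argument is wanted, sketch the standard proof that writes the increment as a sum of two one-variable increments and applies the Mean Value Theorem together with continuity of $u_x, u_y$ (and similarly for $v$) to extract the linear part plus a $o(|h|)$ remainder.
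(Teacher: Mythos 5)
Your proof is correct. Note that the paper itself supplies no proof of this theorem: the forward implication is carried out in the discussion immediately preceding the statement (exactly as you observe), and the converse is left to the cited references (Ahlfors, Conway). Your argument for the converse is the standard one found there --- total differentiability of $u$ and $v$ from continuity of the first-order partials, then the Cauchy--Riemann relations to convert the real-linear increment $(u_x+iv_x)h_1+(u_y+iv_y)h_2$ into the complex-linear increment $(u_x+iv_x)h$ --- and you correctly close the loop with the paper's definition of ``analytic'' (continuously differentiable) by noting that $f'=u_x+iv_x$ inherits continuity from the hypotheses. The one step you rightly flag as non-trivial, differentiability from continuous partials via the Mean Value Theorem, is exactly the point that must not be skipped, since mere existence of the partials would not suffice.
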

Such $u$ and $v$ are called harmonic conjugates. It is worth mentioning that if $\Omega $ happens to be whole of $\mathbb{C}$ or some open disk, then for any $u$ harmonic on $\Omega$ there is a harmonic function $v$ on $\Omega$ such that $f=u+i v$ is analytic.
\begin{defi}
Consider power series $\sum a_n(z-a)^n$. The radius of convergence of this power series is defined as 
$$\frac{1}{R}=\lim \sup |a_n|^{\frac{1}{n}}.$$
\end{defi}
We now state some of the results of function theory of one complex variable, which we will be using frequently in the further chapters. We will omit the proofs of the results stated here. For proof one can see ~\cite{ahlfors1} and ~\cite{conway1}.
\begin{thm}
Let $f$ be analytic in $B(a,R)$, then $$f(z)=\sum_{n=0}^{\infty}a_n(z-a)^n \  \mbox{for} \  |z-a|<R,$$ where $\displaystyle{a_n=\frac{1}{n!}f^{(n)}(a)}$ and this series has radius of convergence greater than equal to $R$.
\end{thm}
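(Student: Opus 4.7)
The plan is to derive the Taylor expansion directly from Cauchy's integral formula. First I would fix an arbitrary $r$ with $|z-a|<r<R$ and write
\[
f(z)=\frac{1}{2\pi i}\oint_{|\zeta-a|=r}\frac{f(\zeta)}{\zeta-z}\,d\zeta,
\]
which is available since $f$ is analytic on the closed disc $\overline{B(a,r)}\subset B(a,R)$. The goal is then to expand the Cauchy kernel $\frac{1}{\zeta-z}$ as a geometric series in powers of $(z-a)$ and justify termwise integration.

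Next I would rewrite the kernel as
\[
\frac{1}{\zeta-z}=\frac{1}{(\zeta-a)\left(1-\tfrac{z-a}{\zeta-a}\right)}=\sum_{n=0}^{\infty}\frac{(z-a)^n}{(\zeta-a)^{n+1}}.
\]
Because $|z-a|/|\zeta-a|=|z-a|/r<1$ and $\zeta$ ranges over a compact set, the series converges uniformly in $\zeta$ on the contour $|\zeta-a|=r$. Since $f$ is continuous and hence bounded on this circle, multiplication by $f(\zeta)$ preserves uniform convergence, so we may interchange the sum and the contour integral to obtain
\[
f(z)=\sum_{n=0}^{\infty}a_n(z-a)^n,\qquad a_n=\frac{1}{2\pi i}\oint_{|\zeta-a|=r}\frac{f(\zeta)}{(\zeta-a)^{n+1}}\,d\zeta.
\]
I would then invoke the Cauchy integral formula for derivatives, which identifies the contour integral with $f^{(n)}(a)/n!$, giving the stated formula for $a_n$. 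A small subtlety is noting that $a_n$ is independent of the auxiliary radius $r$ chosen, which follows either from the coefficient formula $f^{(n)}(a)/n!$ or from the deformation invariance of the integral.

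For the radius of convergence statement, I would observe that the representation $f(z)=\sum a_n(z-a)^n$ has been established for every $z$ with $|z-a|<r$, and $r$ may be chosen arbitrarily close to $R$. Hence the series converges throughout $B(a,R)$, which by the Cauchy–Hadamard formula from the preceding definition forces $1/\limsup|a_n|^{1/n}\ge R$. The main obstacle is the uniform-convergence justification for swapping $\sum$ and $\oint$; I would handle it by an explicit tail estimate using $|f(\zeta)|\le M$ on the circle and the geometric bound $|(z-a)/(\zeta-a)|\le\rho<1$, so that the $N$-th remainder is dominated by $M\rho^{N+1}/(1-\rho)$ uniformly in $\zeta$, which is standard but is the one analytic step that is not purely formal.
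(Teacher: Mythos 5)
Your argument is correct and is the standard one. Note that the paper itself states this theorem without proof, deferring to Ahlfors and Conway, so there is no in-text proof to compare against; but your route --- Cauchy's integral formula on a circle of radius $r<R$, uniform geometric-series expansion of the kernel $\frac{1}{\zeta-z}=\sum_{n\ge 0}\frac{(z-a)^n}{(\zeta-a)^{n+1}}$, termwise integration justified by the tail bound $M\rho^{N+1}/(1-\rho)$, and identification of the coefficients via the derivative formula --- is exactly the technique the dissertation deploys later in its proof of Osgood's lemma in Chapter 2, so your proposal is fully consistent with the paper's methods. Your handling of the radius-of-convergence claim (convergence on all of $B(a,R)$ forces the Cauchy--Hadamard radius to be at least $R$) is also correct.
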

\begin{thm}
If $f:\Omega \to \mathbb{C}$ is analytic. Then $f$ is infinitely differentiable.
\end{thm}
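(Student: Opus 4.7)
The plan is to reduce the statement to the corresponding fact about power series, invoking the theorem immediately preceding (which asserts that an analytic function admits a convergent power series expansion on any open ball contained in its domain). Once we know $f$ has a local power series representation, infinite differentiability becomes an assertion about formal power series and their radii of convergence.

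More concretely, I would proceed as follows. Fix an arbitrary point $a \in \Omega$; since $\Omega$ is open, pick $R>0$ with $B(a,R) \subseteq \Omega$. Apply the preceding theorem to obtain
\[
f(z) = \sum_{n=0}^{\infty} a_n (z-a)^n, \qquad |z-a|<R,
\]
with radius of convergence at least $R$. The core technical step is then to show that a convergent power series may be differentiated term by term inside its disk of convergence, producing a new power series with the same radius of convergence. The radius statement follows from the identity $\limsup_n |n a_n|^{1/n} = \limsup_n |a_n|^{1/n}$ (since $n^{1/n} \to 1$), and the legitimacy of termwise differentiation follows from the uniform convergence of the series on every closed disk strictly smaller than $B(a,R)$, via a standard $\varepsilon/3$ argument or by directly estimating the difference quotient against the termwise-differentiated series using the binomial expansion.

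Having established that one differentiation yields another convergent power series on $B(a,R)$, a simple induction shows that $f$ is $k$-times (continuously) differentiable on $B(a,R)$ for every $k \in \mathbb{N}$, with
\[
f^{(k)}(z) = \sum_{n=k}^{\infty} n(n-1)\cdots(n-k+1)\, a_n (z-a)^{n-k}.
\]
Since $a \in \Omega$ was arbitrary and infinite differentiability is a local property, this gives $f \in C^{\infty}(\Omega)$.

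The only genuine obstacle is the termwise differentiation step, which is really a real-analysis statement about uniform convergence dressed up in complex notation; everything else is bookkeeping. I would not expect to need any complex-analytic machinery beyond the already-quoted power series expansion theorem. (An alternative route would be to derive a Cauchy-type integral formula for the derivative and differentiate under the integral sign, but given the tools assembled in the excerpt, the power series approach is the most direct.)
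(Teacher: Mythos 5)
The paper does not actually supply a proof of this statement: it is listed among the one-variable results that are ``stated \ldots without proof,'' with the reader referred to Ahlfors and Conway. So there is no in-paper argument to compare against, and your proposal has to be judged on its own terms. On those terms it is correct and is one of the two standard routes: reduce to the local power series representation and then prove that a power series may be differentiated term by term inside its disk of convergence, the derived series having the same radius because $n^{1/n}\to 1$. The induction and the localization at the end are exactly right. The alternative you mention in passing --- establishing the Cauchy integral formula for $f'$ and differentiating under the integral sign --- is the route most texts (including the cited ones) take when ``analytic'' is defined as ``continuously differentiable,'' since the power series expansion theorem is itself usually derived from the integral formula; your approach simply takes that expansion theorem as the given starting point, which is legitimate here because the paper states it immediately beforehand.

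One small point worth flagging: the expansion theorem as stated in the paper already writes the coefficients as $a_n=\frac{1}{n!}f^{(n)}(a)$, which, read literally, presupposes the infinite differentiability you are trying to prove. To avoid the appearance of circularity you should invoke only the \emph{existence} of a convergent expansion $f(z)=\sum_n a_n(z-a)^n$ with some coefficients $a_n$ (e.g.\ given by Cauchy integrals), prove infinite differentiability by termwise differentiation as you describe, and only then identify $a_n$ with $f^{(n)}(a)/n!$ by evaluating the derived series at $z=a$. This is a presentational repair, not a gap in the mathematics.
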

\begin{rem}
This is one of the most interesting aspects of functions analytic on domains of $\mathbb{C}$. In $\mathbb{R}^n\ (n\geq 2)$, such a result does not hold. In fact, a function which is once differentiable may not even be twice differentiable. But in case of a function defined on a domain of $\mathbb{C}$, if $f$ is once differentiable, it becomes infinitely differentiable.
\end{rem}
\begin{thm}
If $f:\Omega \to \mathbb{C}$ is analytic and not identically zero. Then the set
$\displaystyle{\{z:f(z)=0\}}$ does not have a limit point in $\Omega$.
\end{thm}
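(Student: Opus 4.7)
The plan is to argue by contradiction, combining the local power series representation (from the preceding theorem) with the connectedness of $\Omega$. Suppose the zero set $Z=\{z\in\Omega:f(z)=0\}$ has a limit point $a\in\Omega$. By continuity, $f(a)=0$. Pick $R>0$ so that $B(a,R)\subseteq\Omega$, and expand
$$f(z)=\sum_{n=0}^{\infty}a_n(z-a)^n,\qquad |z-a|<R,$$
using the earlier power series theorem, where $a_n=f^{(n)}(a)/n!$.

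The key local step is to show that every coefficient $a_n$ must vanish. If not, let $k$ be the smallest index with $a_k\neq 0$, and write $f(z)=(z-a)^k g(z)$ where $g(z)=\sum_{n\geq 0}a_{n+k}(z-a)^n$. Since $g$ is a convergent power series on $B(a,R)$, it is continuous there, and $g(a)=a_k\neq 0$, so $g$ is nonzero in some smaller disk $B(a,\rho)$. Then $f$ has no zero other than $a$ in $B(a,\rho)$, contradicting the assumption that $a$ is a limit point of $Z$. Hence $a_n=0$ for every $n$, which forces $f\equiv 0$ on $B(a,R)$.

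To upgrade this local vanishing to global vanishing, I would invoke connectedness of $\Omega$. Consider the set
$$A=\{z\in\Omega:f^{(n)}(z)=0\ \text{for every}\ n\geq 0\}.$$
The argument above shows $a\in A$, so $A$ is nonempty. $A$ is closed in $\Omega$ as the intersection of the closed sets $(f^{(n)})^{-1}(0)$, which makes sense because each derivative is continuous (indeed, analytic functions are infinitely differentiable by the preceding theorem). $A$ is open because at any $b\in A$ the power series expansion of $f$ around $b$ has all coefficients zero, so $f$ vanishes on a disk around $b$, and on that disk all derivatives also vanish. Since $\Omega$ is connected and $A$ is a nonempty clopen subset, $A=\Omega$, hence $f\equiv 0$ on $\Omega$, contradicting the hypothesis.

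The main obstacle is not any single computation but rather the careful separation of the local and global parts of the argument: one must resist the temptation to conclude $f\equiv 0$ on $\Omega$ from vanishing on one disk, and instead use the clopen trick together with the fact that analyticity (and hence the power series representation) is available at every point of $\Omega$, not just near $a$.
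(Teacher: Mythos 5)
Your proof is correct. The paper states this theorem without proof, deferring to Ahlfors and Conway, and your argument --- showing every Taylor coefficient at a limit point of the zero set must vanish (via the factorization $f(z)=(z-a)^k g(z)$ with $g(a)\neq 0$), then globalizing with the clopen set $A=\{z\in\Omega:f^{(n)}(z)=0\ \text{for all}\ n\}$ and connectedness --- is precisely the standard proof found in those references, with both the local and global steps handled completely.
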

\begin{cor}
Zeroes of a non-constant analytic function are isolated.
\end{cor}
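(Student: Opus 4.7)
The plan is to derive this corollary directly from the immediately preceding theorem, which asserts that the zero set of a non-identically-zero analytic function on $\Omega$ has no limit point in $\Omega$. Since the corollary hypothesizes a non-constant analytic function, the first trivial observation to record is that such a function is in particular not identically zero, so the previous theorem is applicable. The whole argument is then just a translation between the topological notion of ``no limit point'' and the local notion of ``isolated zero''.

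Concretely, I would argue by contrapositive (or equivalently by contradiction). Let $f:\Omega\to\mathbb{C}$ be non-constant and analytic, and let $z_0\in\Omega$ be a zero of $f$. Suppose towards contradiction that $z_0$ is not an isolated zero. Then in every neighbourhood $B(z_0,\varepsilon)\subseteq\Omega$ there exists some $z\neq z_0$ with $f(z)=0$. Choosing $\varepsilon=1/n$ for each positive integer $n$ produces a sequence $\{z_n\}$ of zeros of $f$ with $z_n\neq z_0$ and $z_n\to z_0$. Hence $z_0$ is a limit point of the set $\{z\in\Omega : f(z)=0\}$ lying in $\Omega$, contradicting the preceding theorem.

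The entire argument is really bookkeeping; there is no serious obstacle. The only subtlety worth flagging is the distinction between ``non-constant'' (the hypothesis of the corollary) and ``not identically zero'' (the hypothesis of the cited theorem), but the first clearly implies the second, so nothing is lost. No additional machinery beyond the preceding theorem, together with the definition of a limit point of a subset of $\mathbb{C}$, is required.
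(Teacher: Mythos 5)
Your proposal is correct and is exactly the intended derivation: the paper states this corollary immediately after the theorem on the zero set having no limit point and omits the proof (referring to Ahlfors and Conway), so your argument simply supplies the routine translation between ``no limit point in $\Omega$'' and ``every zero is isolated''. The observation that non-constant implies not identically zero is the only hypothesis-matching step needed, and you handle it correctly.
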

\begin{rem}
This result fails to hold in several complex variable theorey. We will see this in next chapter.
\end{rem}
\begin{defi}
A curve $\gamma$ in $\mathbb{C}$ is a continuous function
 $$\gamma :[a,b]\to \mathbb{C}.$$
$\gamma$ is said to be closed if $\gamma (a)=\gamma (b)$.
For a curve $\gamma$, image set of $\gamma$ denoted by, $\{\gamma\}$, is called trace of $\gamma$.
\end{defi}
\begin{defi}
For a function $\gamma:[a,b]\to \mathbb{C}$, variation of $\gamma$ with respect to a partition $P=\{a=t_0<t_1\ldots , t_n=b\}$ of $[a,b]$ is defined as 
$$v (\gamma;P)=\sum_{k=1}^{n}|\gamma (t_k)-\gamma (t_{k-1})|.$$
The total variation of $\gamma$ is defined as
$$V(\gamma)=\sup \{v(\gamma ;P): \mbox{P is a partition of P}\}.$$

\end{defi}
\begin{defi}
A curve $\gamma$ is said to be of bounded variation if $V(\gamma)\leq M$ for some $M>0$.
\end{defi}
\begin{defi}
A curve with bounded variation is called a rectifiable curve.
\end{defi}
\begin{defi}
Let $\gamma$ be a closed rectifiable curve in $\mathbb{C}$. Then for $z_0 \notin \{\gamma \}$, index of $\gamma$ with respect to $z_0$, denoted by $n(\gamma ;z_0)$, is defined as 
$$n(\gamma ;z_0)=\frac{1}{2\pi i}\int_{\gamma}(z-z_0)^{-1}dz.$$
\end{defi}
\begin{defi}
Let $U$ be a subset of $\mathbb{C}$. A maximal connected subset of $U$ is called component of $U$. 

\end{defi}
\begin{thm}
Let $\gamma$ be a closed rectifiable curve in a region $\Omega \subseteq \mathbb{C}$. Then $n(\gamma ;z_0)$ is constant for $z_0$ belonging to a component of $\Omega \setminus \{\gamma \}$. And $n(\gamma ;z_0)=0$ for $z_0$ belonging to an unbounded component of $\Omega$.
\end{thm}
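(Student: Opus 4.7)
The plan is to establish three properties of $z_0\mapsto n(\gamma;z_0)$ on $\Omega\setminus\{\gamma\}$: (i) continuity, (ii) integer values only, and (iii) decay to $0$ as $|z_0|\to\infty$. Items (i) and (ii) combined make $n(\gamma;\cdot)$ a continuous integer-valued function on an open set, hence locally constant, hence constant on each connected component. Item (iii) together with (ii) then pins the constant to $0$ on any unbounded component.

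For continuity, I would write
\[
n(\gamma;z_0)-n(\gamma;z_1)=\frac{z_0-z_1}{2\pi i}\int_{\gamma}\frac{dz}{(z-z_0)(z-z_1)}.
\]
Since $\{\gamma\}$ is compact and $z_0\notin\{\gamma\}$, the number $\delta=\mathrm{dist}(z_0,\{\gamma\})$ is strictly positive, and for $z_1$ with $|z_1-z_0|<\delta/2$ both distances $\mathrm{dist}(z_j,\{\gamma\})$ are at least $\delta/2$. Bounding the Stieltjes integral against the total variation $V(\gamma)$ yields $|n(\gamma;z_0)-n(\gamma;z_1)|\le \tfrac{2|z_0-z_1|V(\gamma)}{\pi\delta^2}$, which delivers continuity at $z_0$.

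The integer-valuedness is the step I expect to require real care for a merely rectifiable (not smooth) $\gamma$. The idea is to parametrize $\gamma:[a,b]\to\mathbb{C}$ and set
\[
g(t)=\int_{a}^{t}\frac{d\gamma(s)}{\gamma(s)-z_0},
\]
a Riemann--Stieltjes integral that exists because $\gamma$ is of bounded variation and the integrand is continuous on $\{\gamma\}$. A short manipulation then shows that the function $(\gamma(t)-z_0)\,e^{-g(t)}$ is constant in $t$, so evaluating at $t=a$ and $t=b$ and using $\gamma(a)=\gamma(b)$ gives $e^{g(b)}=1$, i.e.\ $n(\gamma;z_0)=g(b)/(2\pi i)\in\mathbb{Z}$. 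Justifying the constancy step rigorously is the main technical hurdle; the cleanest workaround is to approximate $\gamma$ uniformly by closed polygonal curves $\gamma_k$ (their indices are manifestly integers by summing argument changes along each segment modulo $2\pi$), observe that the integral defining $n(\gamma_k;z_0)$ converges to that defining $n(\gamma;z_0)$ by a uniform estimate like the one used for continuity, and conclude that the integer limit is itself an integer.

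Finally, for the behaviour at infinity, pick any $R>\max_{z\in\{\gamma\}}|z|$; then for $|z_0|>R$ the trivial estimate
\[
|n(\gamma;z_0)|\le\frac{V(\gamma)}{2\pi(|z_0|-R)}\xrightarrow{|z_0|\to\infty}0
\]
shows that arbitrarily far from the origin the index has absolute value less than $1$. On an unbounded component such points exist, and since by (i) and (ii) the index is a constant integer there, that constant must be $0$.
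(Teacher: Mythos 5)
Your proposal is correct, and it is the standard argument (continuity plus integer-valuedness gives local constancy; the $V(\gamma)/(|z_0|-R)$ decay estimate forces the constant to be $0$ on the unbounded component). The paper itself states this theorem without proof, deferring to Ahlfors and Conway, and your argument -- including the honest flag that integer-valuedness for a merely rectifiable $\gamma$ needs either the $(\gamma(t)-z_0)e^{-g(t)}$ device or polygonal approximation -- is essentially the proof found in those references.
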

\begin{thm}
Let $\Omega \subseteq \mathbb{C}$ be a region and let $f$ be analytic on $\Omega $ with $a_1,a_2,\ldots ,a_n$ zeroes of $f$ (counted according to multiplicity). If $\gamma $ is a closed rectifiable curve in $\Omega$ not passing through any of the $a_i's$ and if $\gamma$ is homologus to zero, then
$$\frac{1}{2\pi i}\int_{\gamma}\frac{f'(z)}{f(z)}dz=\sum_{i=1}^{n}n\left(\gamma ;a_i\right).$$
\end{thm}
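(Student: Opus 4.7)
The plan is to proceed by the standard factorization argument. Since $a_1, a_2, \ldots, a_n$ are the zeroes of $f$ in $\Omega$ counted with multiplicity, I would first factor
$$f(z) = (z-a_1)(z-a_2)\cdots(z-a_n)\, g(z),$$
where $g$ is analytic on $\Omega$ and nowhere zero. Producing such a $g$ requires verifying that after dividing out the linear factors, the resulting quotient is holomorphic at each $a_i$ (using that $a_i$ has a definite order of vanishing equal to its multiplicity in the list) and that it does not vanish anywhere else on $\Omega$ (since the $a_i$ exhausted the zero set inside the region enclosed by $\gamma$; more precisely, the zeros outside the region bounded by $\gamma$ can be absorbed into $g$, and I must be careful that only the zeros giving nonzero winding number contribute).

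Next I would take the logarithmic derivative. Since differentiation distributes over products, I get
$$\frac{f'(z)}{f(z)} = \sum_{i=1}^{n} \frac{1}{z-a_i} + \frac{g'(z)}{g(z)}$$
on $\Omega \setminus \{a_1,\ldots,a_n\}$. Integrating term by term along $\gamma$, the $i$-th summand contributes $\int_\gamma (z-a_i)^{-1}\,dz = 2\pi i\, n(\gamma; a_i)$ by definition of the index, so after dividing by $2\pi i$ I obtain the desired sum plus the extra term $\frac{1}{2\pi i}\int_\gamma \frac{g'(z)}{g(z)}\,dz$.

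The main step is then to show the remainder integral vanishes. Because $g$ is analytic and nowhere zero on $\Omega$, the function $g'/g$ is analytic on all of $\Omega$. Since $\gamma$ is assumed to be homologous to zero in $\Omega$, Cauchy's theorem (in its homology form, which is a result of one-variable theory stated in standard references such as \cite{ahlfors} and \cite{conway}) gives $\int_\gamma g'/g\, dz = 0$. Combining this with the previous display yields the claimed identity.

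The main obstacle I anticipate is the bookkeeping in the factorization step: one must justify that $f$ admits the product factorization globally on $\Omega$ (not merely locally near each $a_i$) and that the resulting $g$ really is holomorphic and nonvanishing throughout $\Omega$. This is a standard consequence of the fact that the order of a zero of an analytic function is well-defined and that removing finitely many zeroes of known multiplicity yields a holomorphic function by the removable singularity theorem, so there is no genuine difficulty, only care. Once this factorization is in hand, everything else reduces to the definition of the index and Cauchy's theorem for curves homologous to zero.
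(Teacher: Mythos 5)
The paper states this theorem without proof: it appears in the list of one-variable results whose proofs are explicitly omitted and deferred to Ahlfors and Conway, so there is no in-text argument to compare against. Your proof is the standard one from those references and is correct: writing $f(z)=(z-a_1)(z-a_2)\cdots(z-a_n)\,g(z)$ with $g$ holomorphic and nonvanishing on $\Omega$ (legitimate because the listed multiplicities match the orders of vanishing, so each quotient has a removable singularity), taking the logarithmic derivative to get $\frac{f'}{f}=\sum_{i=1}^{n}\frac{1}{z-a_i}+\frac{g'}{g}$, and then using the definition of the index together with the homology form of Cauchy's theorem applied to the analytic function $g'/g$. One small clarification: your parenthetical worry about ``absorbing zeros outside the region bounded by $\gamma$ into $g$'' is unnecessary --- the hypothesis is that $a_1,\ldots,a_n$ exhaust the zeros of $f$ in all of $\Omega$, so every one of them goes into the linear factors and the ones not enclosed by $\gamma$ simply contribute $n(\gamma;a_i)=0$ to the sum; no case distinction is needed.
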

\begin{thm}
Suppose $f$ is analytic in $B(a,R)$ and let $\alpha =f(a)$. If $f(z)-\alpha $ has a zero of order $m$ at $z=a$, then there is an $\epsilon >0$ and a $\delta >0$ such that for 
$|\zeta -\alpha |<\delta$, the equation $f(z)=\zeta$ has exactly $m$ simple roots in $B(a,\epsilon)$. 
\end{thm}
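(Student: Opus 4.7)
The plan is to count zeros by means of the argument principle stated just above (Theorem 1.10), and then exploit integer-valuedness plus continuity to transfer the count from $\zeta=\alpha$ to a whole neighborhood. To guarantee that the zeros we count for $\zeta\neq\alpha$ are simple, I will also need to arrange that $f'$ does not vanish on a deleted neighborhood of $a$.

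First I would choose $\epsilon>0$ small enough that $\overline{B(a,\epsilon)}\subset B(a,R)$ and the following two conditions hold simultaneously on the punctured disk $0<|z-a|\le\epsilon$: (a) $f(z)-\alpha\neq 0$, and (b) $f'(z)\neq 0$. Condition (a) is possible because $f-\alpha$ is a non-constant analytic function (it has a zero of finite order $m$ at $a$), so its zeros are isolated by the corollary stated earlier. Condition (b) is possible because $f'$ is analytic and is not identically zero (writing $f(z)-\alpha=(z-a)^m g(z)$ with $g(a)\neq 0$ gives $f'(z)=(z-a)^{m-1}\bigl[mg(z)+(z-a)g'(z)\bigr]$, whose bracketed factor is non-zero at $a$), so its zeros are also isolated. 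A single $\epsilon$ makes both hold.

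Let $\gamma$ parametrize the circle $|z-a|=\epsilon$ and set
$$\delta:=\min_{z\in\{\gamma\}}|f(z)-\alpha|,$$
which is strictly positive by (a) and the compactness of $\{\gamma\}$. For each $\zeta$ with $|\zeta-\alpha|<\delta$, the function $z\mapsto f(z)-\zeta$ is nowhere zero on $\{\gamma\}$, so the argument principle applies inside $B(a,\epsilon)$ and the number of zeros (with multiplicity) equals
$$N(\zeta)=\frac{1}{2\pi i}\int_{\gamma}\frac{f'(z)}{f(z)-\zeta}\,dz.$$
The integrand is jointly continuous in $(z,\zeta)$ on $\{\gamma\}\times B(\alpha,\delta)$, so $\zeta\mapsto N(\zeta)$ is continuous; being integer-valued on the connected disk $B(\alpha,\delta)$, it is constant. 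Since $N(\alpha)=m$ by hypothesis, $N(\zeta)=m$ for every $\zeta$ in this disk.

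It remains to upgrade ``$m$ counted with multiplicity'' to ``$m$ simple roots'' when $\zeta\neq\alpha$. Any root $z_0\in B(a,\epsilon)$ of $f(z)=\zeta$ satisfies $f(z_0)=\zeta\neq\alpha=f(a)$, so $z_0\neq a$; hence by (b), $f'(z_0)\neq 0$, which forces $z_0$ to be a simple zero of $f(z)-\zeta$. Therefore the $m$ roots counted by $N(\zeta)$ are distinct simple roots, as required. The main obstacle, such as it is, is purely organizational: coordinating the choice of $\epsilon$ so that both $f-\alpha$ and $f'$ are non-vanishing on the punctured closed disk, so that one simultaneously has a well-defined integral $N(\zeta)$ and automatic simplicity of the roots once $\zeta\neq\alpha$.
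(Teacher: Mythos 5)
The paper states this theorem without proof; it appears among the background results of Chapter 1, where the author explicitly defers all proofs to Ahlfors and Conway. Your argument is correct and is essentially the standard proof from those references: you use the argument-principle integral $N(\zeta)=\frac{1}{2\pi i}\int_{\gamma}\frac{f'(z)}{f(z)-\zeta}\,dz$, its continuity and integer-valuedness on the connected disk $B(\alpha,\delta)$ to get $N(\zeta)=N(\alpha)=m$, and the non-vanishing of $f'$ on the punctured disk to upgrade multiplicity count to $m$ distinct simple roots. You also correctly isolate the one point needing care, namely that simplicity is only claimed for $\zeta\neq\alpha$ (for $\zeta=\alpha$ the single root $a$ has multiplicity $m$), so there is nothing to correct.
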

Now, we state a very beautiful result due to Goursat, which says that we do not need to impose the condition of $f'$ to be continuous in the definition of analytic functions.
\begin{thm}
Let $\Omega \subseteq \mathbb{C}$ be open and $f:\Omega \to \mathbb{C}$ be differentiable. Then $f$ is analytic in $\Omega$.
\end{thm}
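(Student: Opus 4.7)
The plan is to follow the classical Goursat route in three stages: triangle integrals, primitives on disks, and Cauchy's integral formula. Since this paper defines ``analytic'' as continuously differentiable, the only thing missing is continuity of $f'$, so the strategy is to produce an integral representation of $f$ that can be differentiated under the integral sign arbitrarily often.

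\textbf{Step 1 (Goursat's lemma).} I would first show that $\int_{\partial T} f(z)\,dz = 0$ for every solid triangle $T \subset \Omega$. Join the midpoints of the sides of $T$ to split it into four congruent subtriangles $T^{(1)},\dots,T^{(4)}$ with consistently oriented boundaries; their boundary integrals sum to $\int_{\partial T} f\,dz$, so at least one of them, call it $T_1$, satisfies $\bigl|\int_{\partial T_1} f\,dz\bigr| \geq \tfrac{1}{4}\bigl|\int_{\partial T} f\,dz\bigr|$. Iterating yields a nested sequence $T \supset T_1 \supset T_2 \supset \cdots$ of closed triangles whose diameters halve at each step, converging to a single point $z_0 \in T$. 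At $z_0$ the differentiability hypothesis gives
\[
f(z) = f(z_0) + f'(z_0)(z - z_0) + \varepsilon(z)(z - z_0), \qquad \varepsilon(z) \to 0 \text{ as } z \to z_0.
\]
The affine part integrates to zero over any closed polygonal path, and the error term is controlled by $\sup_{z \in T_n} |\varepsilon(z)| \cdot \mathrm{diam}(T_n) \cdot \mathrm{perim}(T_n) = O(4^{-n})\cdot \sup_{z \in T_n}|\varepsilon(z)|$. Combined with the $4^n$ lower bound this forces $\int_{\partial T} f\,dz = 0$.

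\textbf{Step 2 (primitives on disks).} On any open disk $B(a,r) \subset \Omega$, define $F(z) := \int_{[a,z]} f(w)\,dw$. Applying Step~1 to the triangle with vertices $a, z, z+h$ gives $F(z+h) - F(z) = \int_{[z,z+h]} f(w)\,dw$, and since differentiability of $f$ forces continuity of $f$, the estimate $\bigl| \tfrac{F(z+h)-F(z)}{h} - f(z)\bigr| \leq \sup_{w \in [z,z+h]}|f(w)-f(z)| \to 0$ yields $F'=f$ on $B(a,r)$. In particular $\int_\gamma f\,dw = 0$ for every closed piecewise-smooth curve in $B(a,r)$.

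\textbf{Step 3 (Cauchy's integral formula).} Fix $z_0 \in \Omega$ and choose $\overline{B(z_0,r)} \subset \Omega$. For $z \in B(z_0,r)$, consider the auxiliary function
\[
g(w) := \begin{cases} \dfrac{f(w) - f(z)}{w - z}, & w \neq z, \\[4pt] f'(z), & w = z, \end{cases}
\]
which is continuous on $\Omega$ and differentiable on $\Omega \setminus \{z\}$. A refinement of Step~1 handling the exceptional point $z$ (splitting any triangle containing $z$ into subtriangles and shrinking a small one around $z$, using only continuity of $g$ at $z$) gives $\int_{\partial T} g\,dw = 0$ for all triangles, hence by Step~2 a primitive for $g$ on $B(z_0,r)$ and therefore $\int_{\partial B(z_0,r)} g(w)\,dw = 0$. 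Rearranging produces
\[
f(z) = \frac{1}{2\pi i}\int_{\partial B(z_0,r)} \frac{f(w)}{w - z}\,dw, \qquad z \in B(z_0,r),
\]
using the standard computation of $\int_{\partial B(z_0,r)} (w-z)^{-1}\,dw = 2\pi i$.

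\textbf{Step 4 (conclusion).} The right-hand side is an integral whose integrand depends smoothly on $z$ with uniformly bounded $z$-derivatives on compact subsets of $B(z_0,r)$, so differentiation under the integral sign shows $f$ is infinitely differentiable there, with $f'$ in particular continuous. Thus $f$ is continuously differentiable at every point of $\Omega$, i.e.\ analytic in the sense of the paper.

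\textbf{Main obstacle.} The delicate point is Step~3: the auxiliary function $g$ is only continuous (not differentiable) at $w=z$, so I cannot invoke Step~1 verbatim. The workaround is to verify $\int_{\partial T} g\,dw = 0$ by isolating $z$ inside an arbitrarily small subtriangle and using continuity of $g$ to make the contribution negligible, then combining with Step~1 on the complementary subtriangles. Once that is handled, the passage from triangle integrals to a genuine Cauchy formula, and hence to $C^\infty$-regularity, is routine.
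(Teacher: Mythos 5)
Your proposal is correct: it is the classical Goursat--Pringsheim argument (triangle bisection, primitives on a convex set, the auxiliary difference-quotient function, and differentiation under the integral sign), and each step is carried out soundly, including your honest flagging of the one genuinely delicate point, namely that the auxiliary function $g$ is merely continuous at $w=z$ and so needs the strengthened form of the triangle lemma. There is nothing in the paper to compare this against: the statement appears in the list of one-variable results whose proofs are explicitly omitted, with the reader referred to Ahlfors and Conway, so your argument simply supplies the standard proof that the paper defers to the literature. One small repair is needed in Step 3: the circle $\partial B(z_0,r)$ does not lie in the open disk $B(z_0,r)$ on which you build the primitive, so you should construct the primitive on a slightly larger disk $B(z_0,r')$ with $r<r'$ and $B(z_0,r')\subseteq\Omega$, which exists because $\overline{B(z_0,r)}\subset\Omega$ and $\Omega$ is open. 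With that adjustment, and writing out the shrinking-subtriangle estimate you sketch for the exceptional point, the proof is complete.
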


\section{Harmonic Mappings}
Harmonic functions play a vital role in the field of physics and in solving partial differential equations. In complex analysis as well, class of harmonic functions are one the most studied and useful functions. In the case of function theory of one variable, an analytic function has a distinguishing property. In this case real and imaginary parts of an analytic function are harmonic functions which are harmonic conjugate to each other and related via Cauchy Riemann equations. This has vast implications in physics, namely if one is considered as a potential, then the other is the flow which the potential induces. A harmonic function is uniquely determined by its boundary values. We can construct a harmonic function with prescribed boundary values and determine its conjugate locally uniquely upto an additive constant. Thus theoretically, they give us a way to construct analytic function on certain kind of domains (simply connected). This makes it easy to construct analytic functions of one complex variable. Another importance of harmonic mappings in complex theory of one variable is that real and imaginary parts of an analytic function are harmonic. Thus in order to study some aspects of analytic functions it suffices to study the harmonic mappings. And a vast number of interesting results for analytic functions follows from the study of harmonic mappings. The main properties of analytic functions can be deduced by studying the harmonic functions.
\begin{defi}$(\textbf{\mbox{Harmonic mapping}})$
Let $u:G\subseteq \mathbb{C} \rightarrow \mathbb{R}$ be of class $C^2$. Then $u$ is said to be real harmonic if it satisfies the Laplace's equation
$$\displaystyle{\frac{\partial ^2u}{\partial x^2}+\frac{\partial ^2u}{\partial y^2}=0.}$$
A mapping $u:G\subseteq \mathbb{C} \rightarrow \mathbb{C}$ is said to be complex harmonic if its real and complex parts are real harmonic.
$\mbox{That is} \  \Re u , \Im u:G\subseteq \mathbb{C} \rightarrow \mathbb{R}$ are real harmonic. Equivalently, a complex valued function $f$ defined on $\mathbb{C}$ is complex harmonic if and only if ${\displaystyle\frac{\partial^2 f}{\partial z \partial \bar{z}}=0.}$ 
\end{defi}

\begin{rem}
If $f=u+iv:G\subseteq \mathbb{C} \rightarrow \mathbb{C}$ is analytic funtion, then $u$ and $v$ satisfy Cauchy Riemann equations:
$$\frac{\partial u}{\partial x}=\frac{\partial v}{\partial y} \ 
\mbox{and} \  \frac{\partial u}{\partial y}=-\frac{\partial v}{\partial x}.$$
Again differentiating partially we get that $$\frac{\partial ^2u}{\partial x^2}=\frac{\partial ^2v}{\partial x \partial y}\ \mbox{and} \ \frac{\partial ^2u}{\partial y^2}=-\frac{\partial ^2v}{\partial y \partial x},$$
which in turn implies that $$\displaystyle{\frac{\partial ^2u}{\partial x^2}+\frac{\partial ^2u}{\partial y^2}=0.}$$
Thus $u$ is real harmonic, similarly, $v$ is real harmonic and
hence $f$ is complex harmonic.
Thus we get that every analytic function is complex harmonic.
\end{rem}
It is noteworthy that for a function $f=u+iv:G\subseteq \mathbb{C} \rightarrow \mathbb{C}$ to be harmonic, we require that $u$ and $v$ to satisfy Laplace's equation. On the other hand, for $f$ to be analytic, we require real and imaginary parts of $f$ to satisfy Cauchy Riemann equations.
Already, we have seen that Cauchy Riemann equations implies Laplace's equation.
Hence class of complex harmonic functions contains the class of analytic functions.
\begin{rem}
For a harmonic function $u$ defined on whole of $\mathbb{C}$ or on some open disk, there exists a harmonic function $v$ defined on the domain such that $f=u+iv$ is analytic.
Such $v$ is called harmonic conjugate of $u$.
\end{rem}
\begin{rem}
A complex harmonic map need not be analytic.
\end{rem}
\textbf{Example of a complex harmonic map, which is not analytic}\\
Consider map $u:G=\mathbb{C} \setminus \{0\} \rightarrow \mathbb{C}$ defined as 
$$u(z)=log |z|
=\frac{1}{2}log (x^2+y^2), \ z=x+iy,$$
then $u$ satisfies Laplace's equation and it does not have any harmonic conjugate.
So, $u+iv$ is not harmonic for any choice of harmonic map $v$.\\
Thus, in particular if we take $v$ to be defined on $G$ as
$$v(z) =a\ \mbox{for all} \  z\in{G},$$ where a is any non-zero constant.
Then $v$ is harmonic.
Thus $f=u+iv$ is complex harmonic but not analytic.
\begin{rem}
Composition of a harmonic function with an analytic function is harmonic. More precisely, harmonic of an analytic function is harmonic. But composition of two harmonic functions may not be harmonic. For example consider map given by $f(z)=z+\bar{z}$ and $g(z)=z^2$, then both $f$ and $g$ are harmonic but composition given by $g\circ f(z)=(z+\bar{z})^2$ is not harmonic. Note that this example also shows that analytic of a harmonic map may not be harmonic.
\end{rem}
\begin{rem}
Product of two analytic functions is analytic, this is no longer true for harmonic functions.
For example, consider $u$ defined as $$u(x+iy)=x$$ is harmonic.
But $u.u(z)=x^2$, is not harmonic.
\end{rem}
\begin{defi}
Let $G\subseteq \mathbb{C}$ be open and $u:G\to \mathbb{R}$ be continuous. Then $u$ is said to have mean value property (MVP) if whenever $ \overline{B(a,r)}\subseteq G,$ $$u(a)=\frac{1}{2\pi}\int_0^{2\pi}u(a+re^{i t}) dt.$$
\end{defi}
\begin{lem}
Let $G\subseteq \mathbb{C}$ be open and $u:G\to \mathbb{R}$ be harmonic. Then whenever $ \overline{B(a,r)}\subseteq G,$ $$u(a)=\frac{1}{2\pi}\int_0^{2\pi}u(a+re^{i t}) dt.$$
\end{lem}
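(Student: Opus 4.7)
The plan is to reduce the mean value property for harmonic functions to the Cauchy integral formula for an analytic function, using the fact — already noted earlier in the section — that a harmonic function on an open disk admits a harmonic conjugate.

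First I would exploit the hypothesis $\overline{B(a,r)} \subseteq G$. Since $G$ is open and $\overline{B(a,r)}$ is compact, there exists $\varepsilon>0$ such that the slightly larger open disk $D := B(a, r+\varepsilon)$ is contained in $G$. Working on the disk $D$ is essential because the existence of a harmonic conjugate is guaranteed only on simply connected domains such as a disk or the whole plane (this is precisely the remark preceding the lemma). So on $D$ we obtain a harmonic function $v$ such that $f := u + iv$ is analytic on $D$.

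Next I would apply Cauchy's integral formula to the analytic function $f$ at the point $a$, using the circle $\gamma(t) = a + re^{it}$, $t\in[0,2\pi]$, which lies in $D$:
$$f(a) \;=\; \frac{1}{2\pi i}\int_{\gamma}\frac{f(z)}{z-a}\,dz.$$
Substituting $z - a = re^{it}$ and $dz = ire^{it}\,dt$ the integrand simplifies to $f(a+re^{it})\,dt$, giving
$$f(a) \;=\; \frac{1}{2\pi}\int_0^{2\pi} f(a+re^{it})\,dt.$$
Taking real parts on both sides yields
$$u(a) \;=\; \frac{1}{2\pi}\int_0^{2\pi} u(a+re^{it})\,dt,$$
which is the desired identity.

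There is no serious obstacle here; the only delicate point is the reliance on the existence of a harmonic conjugate, which is available because we pass to an open disk $D \subseteq G$ rather than working on all of $G$ (where $u$ might fail to have a global conjugate). Once $f = u+iv$ is in hand, everything reduces to parameterising the circle and taking real parts. Note that the result is stated only for real-valued $u$, so taking $\Re f = u$ is legitimate; if one wished to extend it to complex-valued harmonic functions, one would simply apply the scalar case to the real and imaginary parts separately.
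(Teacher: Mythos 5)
Your argument is correct and is the standard one: the paper itself states this lemma without proof (it is among the one-variable facts quoted from Ahlfors and Conway), but your route --- pass to a slightly larger disk $B(a,r+\varepsilon)\subseteq G$, invoke the harmonic conjugate available on a disk (exactly the remark the paper records just before this lemma), apply Cauchy's integral formula on the circle $\gamma(t)=a+re^{it}$, and take real parts --- is precisely the proof the cited texts give. The only point worth being explicit about, which you handle correctly, is that the conjugate is taken on the disk $D$ rather than on all of $G$, since $G$ need not be simply connected.
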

Above mentioned lemma says that a harmonic function has mean value property. The converse of this statement is also true, which is stated as:
\begin{lem}
Let $G\subseteq \mathbb{C}$ be open and $u:G\to \mathbb{R}$ be continuous. Suppose that $u$ has mean value property. Then $u$ is harmonic in $G$.
\end{lem}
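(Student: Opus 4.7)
The plan is to reduce the problem to the Dirichlet problem on small disks and then use a maximum-principle argument. Harmonicity is a local property, so it suffices to show that for every closed disk $\overline{B(a,r)} \subseteq G$, the function $u$ is harmonic on $B(a,r)$.

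First I would establish a maximum principle for continuous functions with the mean value property. Suppose $u$ is continuous and has MVP on a connected open set, and attains its supremum $M$ at some interior point $a$. Let $S = \{z : u(z) = M\}$. Then $S$ is closed in the domain by continuity. To see that $S$ is also open, pick $z_0 \in S$ and a closed ball $\overline{B(z_0, r)}$ inside the domain; the MVP gives
$$M = u(z_0) = \frac{1}{2\pi}\int_0^{2\pi} u(z_0 + re^{it})\, dt,$$
and since the integrand is $\leq M$ everywhere, continuity forces $u \equiv M$ on the circle $\partial B(z_0, r)$. Applying the same argument on every smaller radius gives $u \equiv M$ on $B(z_0, r)$, so $S$ is open. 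By connectedness, $S$ is the whole component, so $u$ is constant. The analogous statement holds for the minimum.

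Next, fix $\overline{B(a,r)} \subseteq G$ and use the Poisson integral of the continuous boundary data $u\big|_{\partial B(a,r)}$ to produce a function $h$ that is harmonic on $B(a,r)$, continuous on $\overline{B(a,r)}$, and agrees with $u$ on $\partial B(a,r)$. (The construction and boundary-value properties of the Poisson integral are standard facts of one-variable theory assumed from the references cited in this section.) The difference $v := u - h$ is continuous on $\overline{B(a,r)}$, has MVP on $B(a,r)$ (since both $u$ and the harmonic function $h$ do, by the previous lemma), and vanishes on $\partial B(a,r)$. Applying the maximum and minimum principles established above to $v$ on $B(a,r)$ forces $v \equiv 0$, so $u = h$ on $B(a,r)$, and $u$ is harmonic there.

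The main obstacle is the maximum principle step: MVP alone is weaker than harmonicity, so one cannot invoke standard harmonic-function theorems directly, and the open/closed/connected bootstrap described above is what makes the whole argument work. Once the maximum principle is in hand, the comparison with the Poisson extension is routine, and the conclusion follows because $a \in G$ and $r > 0$ were arbitrary.
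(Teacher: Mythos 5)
Your proof is correct. The paper itself states this lemma without proof (it is one of the standard one-variable facts deferred to the cited references), so there is no in-text argument to compare against; your route --- first establishing a maximum principle for continuous functions with the mean value property via the open--closed--connected argument, then comparing $u$ with the Poisson extension $h$ of its boundary values on a small disk and applying that maximum principle to $u-h$ --- is exactly the classical proof found in those references, and all the steps (in particular that $u-h$ inherits the mean value property by linearity, and that the boundary values force $u-h\equiv 0$) are sound.
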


\chapter{ Introduction to Several variable complex analysis}
The theory of functions of several complex variables is a branch of mathematics which deals with the complex valued functions defined on a domain consisting of complex $n-$tuples. The general theory of functions of several complex variables came into the light far later than the theory of functions of one variable. Many examples of such functions were available in the nineteenth century mathematics, for example theta function and any function of one complex variable that depends on some complex parameter. But theory did not become full fledged area in mathematical analysis, since its importance was not uncovered till that time. The foundation of the subject was laid by Weierstrass in the nineteenth century and with the work of Friedrich Hartogs and Kiyoshi Oka in the 1930s, a general theory began to emerge. Hartogs proved some basic results including Hartogs result on separate analyticiy.
After 1945 important work in france, mainly seminar of Henri Cartan and in germany, work of Hans Grauert and Reinhold Remmert, quickly changed the picture of the theory and the theory began to bloom.\\

The theory of functions of several complex variables is quite difficult to grasp compared to the theory of one complex variable. Main reason is the freedom that the domains have in $\mathbb{C}^n$ due to the increase in the dimension. Another reason is that both the real and imaginary parts of an analytic function are now pluriharmonic functions. Which means more restriction on real and imaginary parts than merely being harmonic. Unlike theory of one variable, where we can start by constructing harmonic functions with prescribed values on the boundary of the domain, we can not always construct a pluriharmonic function with prescribed values on certain portion of the boundary. In some of the cases we can construct such functions, but not always. Therefore, it is difficult to construct analytic functions of several complex variables. Since function theory of one complex variable generally proceeds by constructing analytic functions, we can not simply use the one-variable approach to the case of several complex variables.
\section{Properties of several variables}
Let $n\in {\mathbb{N}}$, by $\mathbb{C}^n$ we denote $n$-dimensional complex space. $\mathbb{C}^n$ has elements of the form $(z_1,z_2,\ldots , z_n):z_i \in{\mathbb{C}} \ (1\leq i\leq n)$.
\begin{defi}
Let $a\in {C}^n$ and $r_j>0$, $j=1, 2, \ldots, n$, a polydisk centered at a with polyradius $r=(r_j)$, denoted by $P(a,r)$, is the subset of $\mathbb{C}^n $ defined as 
$$P(a,r):=\{z\in{\mathbb{C}^n}:\left|z_j-a_j\right|<r_j, \ \mbox{for }\ j=1,2,\ldots, n\}.$$ We denote this polydisk by $P(a,r)$. 
\end{defi}
Note  that this polydisk can be written in the following way
\begin{align*}
P(a,r)
&=\{z\in{\mathbb{C}^n}:\left|z_j-a_j\right|<r_j, \ 1\leq j\leq n \}\\
&=\prod_{1\leq j \leq n} D_j,\  \mbox{where} \  D_j=\{z:|z_j-a_j|<r_j\}.
\end{align*}
Closure of $P(a,r)$ is given by $$\overline{P(a,r)}=\{z\in{\mathbb{C}^n}:|z_j-a_j|\leq r_j, \ \mbox{for }\ j=1,2,\ldots, n\},$$
 and it is called closed polydisk centered at $a$ with polyradius $r$. In case when $r_j=r$ for $j=1,2,\ldots, n$, we call $P(a,r)$ polydisk centered at $a$ with radius $r$.
\begin{rem}
A multi index is of the form $ \alpha =\left({\alpha}_1,\ldots, \alpha_n\right)$, that is, a multi index is an element of $\mathbb{N}^n .$ By $z^v$ we will denote $z_1^{v_1}z_2^{v_2}\ldots z_n^{v_n}. $ We will write $\displaystyle{\frac{\partial^{\alpha}}{\partial z^\alpha}}$ for $\displaystyle{\frac{\partial^{\alpha_1}}{\partial z_1^{\alpha_1}}\ldots \frac{\partial^{\alpha_n}}{\partial z_n^{\alpha_n}}}$. Similarly, we will write $\alpha !=\alpha_1 !\ldots \alpha_n !$ and $|\alpha|=\alpha_1+\ldots +\alpha_n$.
\end{rem}
\begin{defi}
By a domain in $\mathbb{C}^n$ we mean an open and connected subset of $\mathbb{C}^n$. Let $D_j, \ (1\leq j\leq n)$ be domains in $\mathbb{C}$, and consider the subset $D$ of $\mathbb{C}^n$ defined as 
\begin{align*}
D&=\prod_{j=1}^n D_j\\
&=\{z=(z_1,z_2,\ldots, z_n)\in{\mathbb{C}^n}:z_j\in{D_j}, j=1,2,\ldots, n\}.
\end{align*}
D is called the product domain in $\mathbb{C}^n$.
\end{defi}
\begin{defi}
A function $f:\Omega \subseteq \mathbb{C}^n \to \mathbb{C}$ is said to be holomorphic if for each $z^0\in {\Omega}$, there is $r=r(z^0)>0$ such that $ \overline{P}(z^0,r)\subseteq \Omega$ and $f$ can be written as
$$f(z)=\sum_v a_v(z-z^0)^v ,\ \mbox{for all} \  z\in {P(z^0,r)},$$ where $v$ denotes multi index.
\end{defi}
\begin{defi}
A complex polynomial in $\mathbb{C}^n$ is of the form $$P(z)=\sum_{v}c_vz^v,$$ where $v$ is a multi index and $c_v\ne 0$ for finitely many $c_v\in{\mathbb{C}}.$ The degree of $P$ is defined as $$degP=\max \{v_1+\ldots +v_n:v \ \mbox{is a multi index}, c_v\neq 0\}.$$ 
\end{defi}
\textbf{Example}:
Consider complex polynomial on $\mathbb{C}^n$ given by $$P(z_1,z_2,\ldots, z_n)=z_1,$$ the degree of $P$ is 1. Consider another complex polynomial on $\mathbb{C}^6$ defined by 
$$P_1(z)=z_1^3+z_3^4z_5^2+z_4^5+z_2^1z_3^2z_4^2,$$ on $\mathbb{C}^6$. The degree of $P_1$ is $ =\max \{v_1+\ldots +v_n:v\ \mbox{is a multi index}, c_v\neq 0\}=6.$
Note that every complex polynomial in $\mathbb{C}^n$ is analytic on whole of $\mathbb{C}^n$.
\begin{defi}
A complex polynomial $P(z)=\displaystyle{\sum_{v}c_vz^v}$ is homogeneous if degree of each of the term in the polynomial is same. In other words, the set $\{v_1+\ldots +v_n:v\in{\mathbb{C}^n}, c_v\neq 0\}$ is singleton. 
\end{defi}
\textbf{Example}: The polynomial $P$ in the previous example is homogeneous but $P_2$ is not homogeneous. Another example of homogeneous polynomial can be given as $$P_2(z)=z_1^2z_3+z_1z_2z_3+z_2^3+z_3^3$$ on $\mathbb{C}^4.$
\begin{defi}
A function $f:\Omega \subseteq \mathbb{C}^n \to \mathbb{C}$ is said to be holomorphic in each variable separately if for each $i=1,2,\ldots ,n$ and for each fixed $z_1,z_2,\ldots ,z_{i-1},z_{i+1},\ldots ,$ $z_n$, the function $g_i$ defined on $\Omega _i$ as
$$g_i(z)=f(z_1,z_2,\ldots ,z_{i-1},z,z_{i+1},\ldots ,z_n)$$ is holomorphic in one variable sense,
where $\Omega _i=\{z\in{\mathbb{C}:(z_1,z_2,\ldots ,z_{i-1},z,z_{i+1},\ldots ,z_n)}\in{\Omega}\}$.
\end{defi}
\begin{defi}
Let $G\subseteq \mathbb{C}$ be a region and $u:G\to \mathbb{R}$ be continuous. Then $u$ is said to be subharmonic in G if  $$u(a)\leq \frac{1}{2\pi}\int_{o}^{2\pi}u(a+re^{i t}) dt, \ \mbox{whenever} \ \overline{B(a,r)}\subseteq G,$$ and $u$ is said to be superharmonic if $$u(a)\geq \frac{1}{2\pi}\int_{o}^{2\pi}u(a+re^{i t}) dt, \ \mbox{whenever} \ \overline{B(a,r)}\subseteq G.$$
\end{defi}
\begin{rem}
Let $G\subseteq \mathbb{C}$ be open and $u:G\to \mathbb{R}$ be harmonic. Since a harmonic function has mean value property, therefore we have $$u(a)\leq \frac{1}{2\pi}\int_{o}^{2\pi}u(a+re^{i t}) dt,\  \mbox{whenever} \   \overline{B(a,r)}\subseteq G .$$ Thus every harmonic function is subharmonic and superharmonic. \\In fact, $u:G\subseteq \mathbb{C}\to \mathbb{R}$ is harmonic if and only if $u$ is subharmonic and superharmonic.
\end{rem}
\begin{lem}
$(\textbf{\mbox{Osgood's Lemma}} )$Let $f$ be a complex valued function defined on an open subset $U$ of $\mathbb{C}^n$. Suppose that $f$ is analytic in each variable separately and $f$ is jointly continuous in $U$. Then $f$ is jointly analytic on $U$.
\begin{proof}
Consider a polydisk $\displaystyle{P=\prod_{1\leq i \leq n}D_i }$ such that $\overline{P} \subseteq U$. Since $f$ is analytic separately, therefore  for fixed $z_2,z_3,\ldots, z_n$, the map $g$ defined as 
$$g(z)=f(z,z_2,z_3,\ldots ,z_n) ;\ z\in {D_1}$$ is analytic.\\
Therefore, by Cauchy integral formula, for $z_1\in{C_1}$, where $C_1$ is a circle bounding $z_1$,\\
we have,
\begin{align*}
g(z_1)&=\frac{1}{2\pi i }\int_{C_1}\frac{g(\zeta _1)}{\zeta_1-z_1}d\zeta_1\\
&=\frac{1}{2\pi i }\int_{C_1}\frac{f(\zeta _1,z_2,z_3,\ldots, z_n)}{\zeta_1-z_1}d\zeta_1.
\end{align*}
Now, for fixed $z_1,z_3,\ldots, z_n,$ the map defined by 
$$h(z)=f(z_1,z,z_3,\ldots, z_n)$$ is analytic, and therefore by Cauchy integral formula, for $z_2\in{C_2}$, where $C_2$ is a circle bounding $z_2$,\\
we have,
\begin{align*}
h(z_2)&=\frac{1}{2\pi i }\int_{C_2}\frac{h(\zeta)}{\zeta_2-z_2}d\zeta_2\\
&=\frac{1}{2\pi i }\int_{C_2}\frac{f(z_1, \zeta _2,z_3,\ldots, z_n)}{\zeta_2-z_2}d\zeta_2.
\end{align*}
So we get that,
\begin{align*}
f(z_1,z_2,z_3,\ldots, z_n)&=\frac{1}{2\pi i }\int_{C_1}\frac{f(\zeta _1,z_2,z_3,\ldots, z_n)}{\zeta_1-z_1}d\zeta_1\\
&=\frac{1}{2\pi i }\int_{C_1}\frac{1}{\zeta_1-z_1}d\zeta_1 \frac{1}{2\pi i }\int_{C_2}\frac{f(\zeta_1, \zeta _2,z_3,\ldots, z_n)}{\zeta_2-z_2}d\zeta_2.
\end{align*}
Proceeding like this, we obtain
\begin{align*}
f(z_1,z_2,z_3,\ldots, z_n)&=\frac{1}{2\pi i }\int_{C_1}\frac{1}{\zeta_1-z_1}d\zeta_1 \frac{1}{2\pi i }\int_{C_2}\frac{f(\zeta_1, \zeta _2,z_3,\ldots, z_n)}{\zeta_2-z_2}d\zeta_2\\
& \ \ \ \ \ \ldots \frac{1}{2\pi i }\int_{C_n}\frac{f(\zeta_1, \zeta _2,\zeta_3,\ldots, \zeta_n)}{\zeta_n-z_n}d\zeta_n\\
&=\frac{1}{2\pi i }^n\int_{C_1} \int_{C_1}\ldots \int_{C_n} \frac{f(\zeta)}{(\zeta_1-z_1)(\zeta_2-z_2)\ldots (\zeta_n-z_n)}d\zeta_1 d\zeta_2 \ldots d\zeta_n\\
&=\frac{1}{2\pi i }^n\int_{C} \frac{f(\zeta)}{(\zeta_1-z_1)(\zeta_2-z_2)\ldots (\zeta_n-z_n)}d\zeta_1 d\zeta_2 \ldots d\zeta_n.
\end{align*}
For $\left|z_1\right|<\left|\zeta_1\right|$, we can write
\begin{align*}
\frac{1}{\zeta_1-z_1}&= \frac{1}{\zeta_1\left(1-\frac{z_1}{\zeta_1}\right)}\\
&=\zeta_1 \sum_{v_1}\frac{z_1^{v_1}}{\zeta_1^{v_1}}\\
&=\sum_{v_1}\frac{z_1^{v_1}}{\zeta_1^{v_1+1}}.
\end{align*}
Which implies that,
$$f(z_1,z_2,\ldots, z_n)=\frac{1}{(2\pi i)^n }\int_{C} \sum_{v_1} \frac{f(\zeta)z_1^{v_1}}{\zeta_1^{v_1+1}(\zeta_2-z_2)\ldots (\zeta_n-z_n)}d\zeta_1 d\zeta_2 \ldots d\zeta_n.$$\\
Proceeding in the same manner for $z_2$, we get
$$f(z_1,z_2,\ldots, z_n)=\frac{1}{(2\pi i)^n }\int_{C} \sum_{v_1v_2} \frac{f(\zeta)z_1^{v_1}z_2^{v_2}}{\zeta_1^{v_1+1}\zeta_2^{v_2+1}(\zeta_3-z_3)\ldots (\zeta_n-z_n)}d\zeta_1 d\zeta_2 \ldots d\zeta_n.$$
Continuing like this , we can write
$$f(z_1,z_2,\ldots, z_n)=\frac{1}{(2\pi i)^n }\int_{C} \sum_{v_1v_2\ldots v_n} \frac{f(\zeta)z_1^{v_1}z_2^{v_2}\ldots z_n^{v_n}}{\zeta_1^{v_1+1}\zeta_2^{v_2+1} \zeta_3^{v_3+1} \ldots \zeta_n^{v_n+1}}d\zeta_1 d\zeta_2 \ldots d\zeta_n.$$
Interchanging summation and integration, which is justified by Fubini's theorem,
$$f(z)=\sum_va_vz^v,$$
 where $\displaystyle{a_v=a_{v_1v_2\ldots v_n}=\frac{1}{(2\pi i)^n }\int_{C}  \frac{f(\zeta)}{\zeta_1^{v_1+1}\zeta_2^{v_2+1} \zeta_3^{v_3+1} \ldots \zeta_n^{v_n+1}}d\zeta_1 d\zeta_2 \ldots d\zeta_n}$. Hence $f$ is jointly analytic.

\end{proof}
\end{lem}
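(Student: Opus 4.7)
The plan is to prove joint analyticity by deriving an $n$-fold Cauchy integral representation of $f$ from the hypotheses, and then expanding the Cauchy kernel as a multiple geometric series whose coefficients give the desired power series.

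First I would fix an arbitrary $z^{0}\in U$ and pick a closed polydisk $\overline{P(z^{0},r)}\subseteq U$. Let $C_{j}$ denote the boundary circle of the $j$-th factor, parametrized as $\{\zeta_{j}:|\zeta_{j}-z_{j}^{0}|=r_{j}\}$. Using separate analyticity, for any $z\in P(z^{0},r)$ the slice $\zeta_{1}\mapsto f(\zeta_{1},z_{2},\ldots,z_{n})$ is holomorphic on a neighborhood of the closed disk bounded by $C_{1}$, so the one-variable Cauchy integral formula gives
$$f(z_{1},z_{2},\ldots,z_{n})=\frac{1}{2\pi i}\int_{C_{1}}\frac{f(\zeta_{1},z_{2},\ldots,z_{n})}{\zeta_{1}-z_{1}}\,d\zeta_{1}.$$
Next I would iterate, applying the Cauchy formula to $\zeta_{2}\mapsto f(\zeta_{1},\zeta_{2},z_{3},\ldots,z_{n})$ inside the integrand, and so on through $\zeta_{n}$. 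This produces an iterated integral. Joint continuity makes $f$ bounded on the compact distinguished boundary $C_{1}\times\cdots\times C_{n}$, so Fubini's theorem collapses the iterated integral into a single integral over the torus, yielding the multiple Cauchy formula
$$f(z)=\frac{1}{(2\pi i)^{n}}\int_{C_{1}\times\cdots\times C_{n}}\frac{f(\zeta)}{\prod_{j=1}^{n}(\zeta_{j}-z_{j})}\,d\zeta_{1}\cdots d\zeta_{n}.$$

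To extract a power series, I would expand each factor of the kernel as a geometric series
$$\frac{1}{\zeta_{j}-z_{j}}=\sum_{v_{j}\geq 0}\frac{(z_{j}-z_{j}^{0})^{v_{j}}}{(\zeta_{j}-z_{j}^{0})^{v_{j}+1}},$$
which converges absolutely and uniformly in $\zeta_{j}$ on $C_{j}$ whenever $|z_{j}-z_{j}^{0}|<r_{j}$. Multiplying these $n$ series together and using boundedness of $f$ on the torus to interchange the resulting multi-indexed summation with the integral, I arrive at $f(z)=\sum_{v}a_{v}(z-z^{0})^{v}$ with
$$a_{v}=\frac{1}{(2\pi i)^{n}}\int_{C_{1}\times\cdots\times C_{n}}\frac{f(\zeta)}{\prod_{j=1}^{n}(\zeta_{j}-z_{j}^{0})^{v_{j}+1}}\,d\zeta_{1}\cdots d\zeta_{n},$$
valid on $P(z^{0},r)$. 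Since $z^{0}$ was arbitrary, this exhibits $f$ as holomorphic in the sense of the paper.

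The main obstacle is the bookkeeping at the iterated Cauchy step: I must carefully invoke separate analyticity on each successive slice, and I must appeal to joint continuity to justify Fubini when collapsing the iterated integral into an integral over the torus and again when swapping summation with integration. This is the precise place where the hypothesis beyond pure separate analyticity is spent; the deeper theorem of Hartogs treated later will eventually show joint continuity to be redundant, but for Osgood's lemma it is exactly what guarantees the integrand is measurable and bounded on the distinguished boundary so that the two interchanges can be justified uniformly.
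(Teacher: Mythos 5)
Your proposal is correct and follows essentially the same route as the paper's proof: an iterated one-variable Cauchy formula on each slice, collapse to a single integral over the distinguished boundary, geometric-series expansion of the kernel, and interchange of sum and integral justified by boundedness and Fubini. Your version is in fact slightly more careful than the paper's, since you center the expansion at an arbitrary $z^{0}$ and state explicitly where joint continuity is spent, but these are refinements of the same argument rather than a different one.
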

We now state Schwarz's lemma and its corollary without proof. For proof one can see ~\cite{conway2}.\begin{lem}$(\textbf{Schwarz's\ lemma})$
Let $g$ be a holomorphic functin on $\{z:|z|<1\}$, with $g(0)=0$ and $|g(z)|\leq 1$. Then $|g(z)|\leq |z|$ and $|g'(0)|\leq 1$.
\end{lem}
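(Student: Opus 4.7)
The plan is to reduce the claim to the Maximum Modulus Principle by studying the auxiliary function
\[
h(z) \;=\; \begin{cases} g(z)/z, & z \neq 0,\\ g'(0), & z=0,\end{cases}
\]
on the open unit disk $\mathbb{D}=\{z:|z|<1\}$. The first step is to verify that $h$ is holomorphic on $\mathbb{D}$. Since $g$ is holomorphic and $g(0)=0$, the power series expansion (Theorem 1.1.2) gives $g(z)=\sum_{n\ge 1} a_n z^n$ with $a_1=g'(0)$, valid on all of $\mathbb{D}$. Dividing by $z$ yields $h(z)=\sum_{n\ge 0}a_{n+1}z^n$, a convergent power series on $\mathbb{D}$, so $h$ is holomorphic there (and in particular the singularity at $0$ is removable, with $h(0)=g'(0)$).

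Next, I would fix $r$ with $0<r<1$ and look at $h$ on the closed disk $\overline{B(0,r)}$. On the boundary circle $|z|=r$, the hypothesis $|g(z)|\le 1$ gives
\[
|h(z)| \;=\; \frac{|g(z)|}{r} \;\le\; \frac{1}{r}.
\]
Since $h$ is holomorphic on $\mathbb{D}$ (hence on a neighbourhood of $\overline{B(0,r)}$), the Maximum Modulus Principle, which follows from the results of Chapter~1 (being a standard consequence of analyticity together with the open mapping behaviour recorded in Theorem~1.1.6), lets me transfer this bound to the interior: $|h(z)|\le 1/r$ for all $|z|\le r$.

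Now I would let $r\to 1^{-}$. For each fixed $z\in\mathbb{D}$, the inequality $|h(z)|\le 1/r$ holds for every $r\in(|z|,1)$, so passing to the limit gives $|h(z)|\le 1$ on all of $\mathbb{D}$. Unwinding the definition of $h$ yields the two desired conclusions simultaneously: for $z\neq 0$ we get $|g(z)|\le |z|$, and at $z=0$ we get $|g'(0)|=|h(0)|\le 1$.

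The only step requiring care is the justification that $h$ is genuinely holomorphic at the origin, since this is what makes the Maximum Modulus Principle applicable on the whole closed subdisk rather than on an annulus. I would expect this to be the main conceptual obstacle, but it is handled cleanly by the power series expansion guaranteed by analyticity together with the vanishing condition $g(0)=0$. Everything else is a direct application of the maximum principle followed by a limit.
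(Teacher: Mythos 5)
Your proof is correct. The paper itself does not prove this lemma --- it states it without proof and refers the reader to Conway --- and your argument (removing the singularity of $g(z)/z$ via the power series with $a_0=0$, bounding it by $1/r$ on $|z|=r$ through the maximum modulus principle, and letting $r\to 1^{-}$) is precisely the classical proof given in that reference, with the one delicate point, holomorphy of $h$ at the origin, handled correctly.
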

\begin{cor}
Let $g$ be a holomorphic function on $\{z:|z|<r\}$, with $|g(z)|\leq B$ for a bound $B$. Then for $z,\zeta$ in the disk, $$\left|g(z)-g(\zeta)\right|\leq 2B\left|\frac{r(z-\zeta)}{r^2-\bar{\zeta}z}\right|.$$
\end{cor}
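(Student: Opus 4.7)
My plan is to reduce the corollary to the stated Schwarz lemma by pulling the disk $\{|w|<r\}$ back to the unit disk via an appropriate Möbius transformation that sends $\zeta$ to the origin. Specifically, I would introduce the map
\[
\varphi(w)=\frac{r(w-\zeta)}{r^{2}-\bar{\zeta}w},
\]
which is exactly the quantity appearing on the right-hand side of the claimed inequality. The first step is to verify that $\varphi$ is a biholomorphism of $\{|w|<r\}$ onto the unit disk $\mathbb{D}$ with $\varphi(\zeta)=0$. Writing $\psi=\varphi^{-1}\colon\mathbb{D}\to\{|w|<r\}$ gives a holomorphic bijection with $\psi(0)=\zeta$.

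Next I would construct the auxiliary function
\[
F(z)=\frac{g(\psi(z))-g(\zeta)}{2B}, \qquad z\in\mathbb{D}.
\]
By construction $F$ is holomorphic on $\mathbb{D}$ and $F(0)=0$; the bound $|g|\le B$ together with the triangle inequality gives $|g(\psi(z))-g(\zeta)|\le 2B$, hence $|F(z)|\le 1$ on $\mathbb{D}$. These are precisely the hypotheses of the Schwarz lemma stated just above, so I can conclude $|F(z)|\le|z|$ for all $z\in\mathbb{D}$.

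To finish, I would unwind the substitution: given any point $w$ in the disk of radius $r$, set $z=\varphi(w)$, so that $\psi(z)=w$ and $|z|=|\varphi(w)|=\bigl|\tfrac{r(w-\zeta)}{r^{2}-\bar{\zeta}w}\bigr|$. The Schwarz estimate then reads
\[
\frac{|g(w)-g(\zeta)|}{2B}\;\le\;\left|\frac{r(w-\zeta)}{r^{2}-\bar{\zeta}w}\right|,
\]
which is the desired inequality after renaming $w$ as $z$.

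The only non-routine step is checking that $\varphi$ really maps the disk of radius $r$ into (and in fact onto) $\mathbb{D}$; this is the place where the specific algebraic form of $\varphi$ matters. I would verify $|\varphi(w)|<1\iff|w|<r$ by the standard computation comparing $r^{2}|w-\zeta|^{2}$ with $|r^{2}-\bar{\zeta}w|^{2}$, which reduces to the identity $(r^{2}-|w|^{2})(r^{2}-|\zeta|^{2})>0$. Once this is in hand the rest is a direct application of Schwarz.
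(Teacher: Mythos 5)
Your argument is correct and complete. Note that the dissertation itself states this corollary without proof, merely citing Conway, so there is no in-paper argument to compare against; what you have written is the standard (and essentially the only natural) derivation. The three ingredients all check out: the M\"obius map $\varphi(w)=\frac{r(w-\zeta)}{r^{2}-\bar{\zeta}w}$ does carry $\{|w|<r\}$ biholomorphically onto the unit disk with $\varphi(\zeta)=0$, since $|r^{2}-\bar{\zeta}w|^{2}-r^{2}|w-\zeta|^{2}=(r^{2}-|w|^{2})(r^{2}-|\zeta|^{2})$ and the pole $r^{2}/\bar{\zeta}$ lies outside the closed disk; the auxiliary function $F=\bigl(g\circ\psi-g(\zeta)\bigr)/(2B)$ satisfies $F(0)=0$ and $|F|\leq 1$ by the triangle inequality, exactly matching the hypotheses of the Schwarz lemma as stated in the paper (which uses $|g|\leq 1$ rather than strict inequality, and this weaker hypothesis still yields $|g(z)|\leq|z|$); and the unwinding step is a tautology. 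The only cosmetic caveat is the implicit assumption $B>0$ when you divide by $2B$; if $B=0$ the function is identically zero and the inequality is trivial, so you may wish to dispose of that case in one line.
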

\begin{lem}
Let $f$ be separately analytic and bounded on a closed polydisk $\displaystyle{D=\prod_{1\leq i\leq n}D_i}$, where $D_i=\{z_i\in{\mathbb{C}}:|z_i|\leq r_i\}$. Then $f$ is jointly continuous and hence analytic.
\begin{proof}
Let $\left|f(z)\right|\leq B$ for $z\in{D}$.
We will show that $$\left|f(z)-f(\zeta )\right|\leq 2B\sum_{1\leq j\leq n}\frac{r_j\left|z_j-\zeta_j\right|}{r_j^2- \overline{\zeta_j}z_j}.$$
Since 
\begin{align*}
f(z)-f(\zeta)=&f(z_1,z_2,\ldots, z_n)-f(\zeta_1,z_2,\ldots, z_n)\\
&   +f(\zeta_1,z_2,\ldots, z_n)-f(\zeta_1,\zeta_2,\ldots, z_n)+\ldots\\
&   +f(\zeta_1,\zeta_2,\ldots,\zeta_{n-1} ,z_n)-f(\zeta_1,\zeta_2,\ldots, \zeta_n).
\end{align*}
Therefore, it is enough to prove the inequality in one varibale case,
which follows from corollary to Schwarz's lemma.

\end{proof}
\end{lem}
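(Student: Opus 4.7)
The plan is to leverage Osgood's lemma, which was just established: separate analyticity together with joint continuity already yields joint analyticity. So the whole task reduces to proving joint continuity of $f$ on $D$ from separate analyticity and the uniform bound $|f|\le B$.

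First I would telescope the difference $f(z)-f(\zeta)$ by varying one coordinate at a time, writing
\begin{align*}
f(z)-f(\zeta)=\sum_{j=1}^n\bigl[&f(\zeta_1,\ldots,\zeta_{j-1},z_j,z_{j+1},\ldots,z_n)\\
&-f(\zeta_1,\ldots,\zeta_{j-1},\zeta_j,z_{j+1},\ldots,z_n)\bigr],
\end{align*}
so that each summand is the difference of a function of a single complex variable, with all other coordinates frozen. This is exactly the reduction the author sketches, and it turns a multivariate estimate into $n$ one-variable estimates.

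Next, for the $j$-th summand I would consider $g_j(w):=f(\zeta_1,\ldots,\zeta_{j-1},w,z_{j+1},\ldots,z_n)$ as a function of $w\in D_j=\{|w|\le r_j\}$. By the separate analyticity hypothesis, $g_j$ is holomorphic on the open disk of radius $r_j$, and $|g_j|\le B$ there. Applying the Corollary to Schwarz's lemma recorded above gives
$$|g_j(z_j)-g_j(\zeta_j)|\;\le\;2B\left|\frac{r_j(z_j-\zeta_j)}{r_j^2-\overline{\zeta_j}z_j}\right|.$$
Summing the $n$ such inequalities via the triangle inequality produces the claimed bound
$$|f(z)-f(\zeta)|\;\le\;2B\sum_{j=1}^n\frac{r_j|z_j-\zeta_j|}{|r_j^2-\overline{\zeta_j}z_j|}.$$
As $\zeta\to z$ inside the open polydisk, the right-hand side tends to $0$, so $f$ is continuous there; combined with Osgood's lemma this yields joint analyticity on the interior of $D$.

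The one subtle point I would flag is the applicability of the Schwarz-lemma corollary: one needs $|\zeta_j|<r_j$ and $|z_j|<r_j$ so that the denominator $r_j^2-\overline{\zeta_j}z_j$ stays bounded away from zero, which is fine on any compact subset of the open polydisk. This is where the lemma really delivers joint continuity on the \emph{open} polydisk, and so joint analyticity on the interior — precisely what is needed to feed back into Osgood's lemma. No further work is required, because the applications of this result later in the development only need joint analyticity in the interior of the polydisk.
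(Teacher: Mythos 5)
Your proposal is correct and follows essentially the same route as the paper: telescope $f(z)-f(\zeta)$ one coordinate at a time, bound each one-variable difference by the corollary to Schwarz's lemma, sum to get joint continuity, and feed the result into Osgood's lemma. You simply spell out the details (and the caveat about the denominator on the open polydisk) that the paper leaves implicit.
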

We will prove Hartog's theorem by induction on $n$. Assume that Hartog's theorem is true for domains in $\mathbb{C}^{n-1}$. We prove it for domains in $\mathbb{C}^n$. For this we will need the following lemmas.
\begin{lem}
Let $f$ be separately analytic on a (non-empty) closed polydisk $\displaystyle{D=\prod_{1\leq i\leq n} D_i }$, where $D_i$ are closed disks in $\mathbb{C}$. Then there exists non-empty closed disks $E_i \subset D_i$, $1\leq i\leq n-1$, with $E_n=D_n$ such that $f$ is bounded on $\displaystyle{E=\prod_{1\leq i\leq n} E_i}$.
\begin{proof}
For each $r>0$, consider $$\Omega_r=\{z'\in{\prod_{1\leq i\leq n-1}D_i}:\leq\left|f(z',z_n)\right|\leq r \quad  \mbox{for all } \quad z_n\in{D_n}\}.$$
By induction hypothesis, for each fixed $z_n$, the map $z'\rightarrow f(z',z_n)$ is analytic and therefore continuous.
So, if a sequence $\{z_j'\} $ in $\Omega_r$ converges to $z'$, then for fixed $z_n\in D_n$ , since $f$ is continuous as a function of $n-1$ variables, we get that $$f(z_j',z_n) \rightarrow f(z',z).$$ 
Therefore we have,
\begin{align*}
|f(z',z_n)|&=|\lim f(z_j',z_n)|\\
&=\lim|f(z_j',z_n)|\\
&\leq \lim r\\
&=r,
\end{align*}
which gives that $ z'\in{\Omega_r}.$
So, we get that each of $\Omega_r$ is closed.\\
Also, it follows that $$\displaystyle{\cup_{r=1}^{\infty} \Omega_r} =\prod_{1\leq i\leq n-1}D_i.$$
Note that since each of $D_i$ is compact therefore $\prod_{1\leq i\leq n-1}D_i$ is compact and hence complete, which is expressed as a countable union of closed sets.\\
Therefore, by $Baire's \ category \ theorem$, which says that, a complete metric space can not be expressed as a countable union of nowhere dense subsets, we get that atleast one of $\Omega_r$ has non-empty interior. 
Let us assume that $\Omega_{r_0}$ has non-empty interior. Thus we can find $\displaystyle{\prod_{1\leq i\leq n-1} E_i \subset \Omega_{r_0}}$.
Also $f$ is bounded on $\displaystyle{E=\prod_{1\leq i\leq n} E_i}$, $E_n=D_n$.
\end{proof}
\end{lem}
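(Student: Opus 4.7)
The plan is to exhaust $\prod_{i=1}^{n-1} D_i$ by closed sublevel sets of the sup of $|f|$ over the last variable and then apply the Baire category theorem to locate one of these sublevel sets with non-empty interior. Concretely, I would fix $r \in \mathbb{N}$ and set
$$\Omega_r = \Bigl\{z' \in \prod_{i=1}^{n-1} D_i : |f(z', z_n)| \leq r \text{ for every } z_n \in D_n \Bigr\},$$
so that membership in $\Omega_r$ encodes a uniform bound on $f$ in the last slot.

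The next step is to verify that each $\Omega_r$ is closed and that the family $\{\Omega_r\}_{r \in \mathbb{N}}$ exhausts $\prod_{i=1}^{n-1} D_i$. For closedness, the induction hypothesis (Hartog's theorem in $\mathbb{C}^{n-1}$) is crucial: for each fixed $z_n \in D_n$ the partial map $z' \mapsto f(z', z_n)$ is separately analytic in $n-1$ variables, and hence, by the inductive statement, it is jointly analytic and in particular continuous on $\prod_{i=1}^{n-1} D_i$. Consequently $\{z' : |f(z', z_n)| \leq r\}$ is closed for every $z_n \in D_n$, and $\Omega_r$, being the intersection of these closed sets over $z_n \in D_n$, is itself closed. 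For the exhaustion, take any $z' \in \prod_{i=1}^{n-1} D_i$; then $z_n \mapsto f(z', z_n)$ is analytic and hence continuous on the compact disk $D_n$, so it is bounded there by some integer $r$, giving $z' \in \Omega_r$.

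Finally, since $\prod_{i=1}^{n-1} D_i$ is a product of compact disks, it is a complete metric space, and the Baire category theorem implies that some $\Omega_{r_0}$ has non-empty interior. I would then pick closed disks $E_i \subset D_i$ for $1 \leq i \leq n-1$ with $\prod_{i=1}^{n-1} E_i \subset \Omega_{r_0}$ and set $E_n = D_n$; this yields $|f| \leq r_0$ on $E = \prod_{i=1}^n E_i$. The main obstacle is precisely the closedness of $\Omega_r$: it hinges on upgrading the separate continuity of $z' \mapsto f(z', z_n)$ in the first $n-1$ variables to joint continuity, and this upgrade is exactly what the inductive version of Hartog's theorem in $\mathbb{C}^{n-1}$ supplies. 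Without that input we would only have separate continuity, which is not enough to make $\Omega_r$ closed and hence would derail the Baire category argument.
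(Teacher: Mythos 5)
Your proposal is correct and follows essentially the same route as the paper: the same sublevel sets $\Omega_r$, closedness via the inductive (Hartogs in $\mathbb{C}^{n-1}$) continuity of $z'\mapsto f(z',z_n)$, exhaustion of the compact product, and the Baire category theorem to extract an $\Omega_{r_0}$ with non-empty interior. Your justification of the exhaustion step (boundedness of the continuous map $z_n\mapsto f(z',z_n)$ on the compact disk $D_n$) is in fact slightly more explicit than the paper's.
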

Now we prove Hartog's theorem on separate analyticity, in the case when domain of the function is a polydisk. General case can be deduced from this.
We will also use Hartog's lemma on subharmonic functions, which we state here without proof:
\begin{lem}
Let $(u_\alpha)$ be a family of real-valued subhamronic functions in an open set $U$ of $\mathbb{C}$. Suppose that the functions are uniformly bounded from above, and that $$\limsup _{\alpha}u_{\alpha}(z)\leq C$$ for every $z\in{U}$. Then, given $\delta >0$ and compact $K\subset U$ there exists $\alpha_0$ such that for $z\in{K}$ and $\alpha \geq \alpha_0$ $$u_{\alpha}(z)\leq C+\delta.$$
\end{lem}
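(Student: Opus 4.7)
The plan is to combine the sub-mean-value inequality for subharmonic functions with Fatou's lemma, applied after flipping signs to a non-negative family. First I would fix a uniform upper bound $M$ for the family and set $w_\alpha := M - u_\alpha \geq 0$ on $U$. The hypothesis becomes $\liminf_\alpha w_\alpha(z) \geq M - C$ pointwise, and for every closed disk $\overline{B(a,r)} \subset U$ the sub-mean-value inequality (which follows from the circle mean value inequality after integration in the radius) reads
$$w_\alpha(a)\;\geq\;\frac{1}{\pi r^{2}}\int_{B(a,r)}w_\alpha\, dA.$$

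Next I would handle a single point. Fixing $z_{0}\in U$ and $r$ with $\overline{B(z_{0},r)}\subset U$, Fatou's lemma (legal because $w_\alpha\geq 0$) gives
$$\liminf_{\alpha}\int_{B(z_{0},r)}w_\alpha\, dA\;\geq\;\int_{B(z_{0},r)}\liminf_{\alpha}w_\alpha\, dA\;\geq\;(M-C)\pi r^{2},$$
which translates back to $\limsup_\alpha u_\alpha(z_{0})\leq C$, the pointwise version of the lemma at $z_{0}$.

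To upgrade to a uniform statement on the compact set $K$, I would use compactness of $K$ and openness of $U$ to select a single radius $r>0$ with $\overline{B(z,r)}\subset U$ for every $z\in K$, and then cover $K$ by finitely many small balls $B(z_{1},\epsilon),\dots,B(z_{N},\epsilon)$ with $\epsilon\ll r$. For $z\in B(z_{j},\epsilon)$ we have the geometric inclusion $B(z_{j},r-\epsilon)\subset B(z,r)$, and because $w_\alpha\geq 0$ the sub-mean-value inequality then yields
$$u_\alpha(z)\;\leq\; M-\frac{1}{\pi r^{2}}\int_{B(z_{j},r-\epsilon)}w_\alpha\, dA.$$
Applying Fatou to each of the $N$ fixed balls and choosing a single $\alpha_{0}$ that works simultaneously for all $j\leq N$, I obtain a uniform estimate of the shape
$$u_\alpha(z)\;\leq\; C+(M-C)\!\left(1-\Bigl(1-\tfrac{\epsilon}{r}\Bigr)^{\!2}\right)+o(1)\quad(\alpha\to\infty),$$
and the right hand side drops below $C+\delta$ once $\epsilon$ is small enough and $\alpha\geq\alpha_{0}$.

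The main obstacle is the one-sidedness of the sub-mean-value inequality: I cannot simply replace a moving ball $B(z,r)$ by one centred at a fixed covering point $z_{j}$ without controlling the sign of the extra mass. Passing to the non-negative family $w_\alpha$ and then \emph{shrinking} rather than enlarging the ball is what orients the monotonicity correctly, at the price of a loss of order $\epsilon/r$ in the final bound; this forces the cover to be chosen (so that $\epsilon$ is small) \emph{before} the threshold $\alpha_{0}$ is extracted from Fatou.
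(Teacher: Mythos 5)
The dissertation states this lemma explicitly \emph{without} proof (it is imported as a black box for the proof of Hartogs' theorem, following H\"{o}rmander), so there is no internal argument to compare against; your proposal fills that gap, and it does so correctly with what is essentially the classical proof (H\"{o}rmander, Theorem~1.6.13). The three ingredients --- the area form of the sub-mean-value inequality obtained by integrating the circle averages in the radius, Fatou's lemma applied to the non-negative functions $w_\alpha = M - u_\alpha$, and the replacement of the moving disk $B(z,r)$ by a fixed slightly smaller disk $B(z_j,r-\epsilon)$ using $w_\alpha\geq 0$ --- are exactly the right ones, and your closing remark about the order of quantifiers (choose $\epsilon$ first, then extract $\alpha_0$ from the finitely many Fatou estimates) is the genuine crux of the uniformity step. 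Two small points to make explicit in a polished write-up: first, Fatou's lemma as you invoke it requires the index set to be a sequence (or at least to admit a countable cofinal subfamily); the lemma as stated speaks of a ``family'' with $\alpha\geq\alpha_0$, and in the application the index is a multi-index ordered by $|\alpha|$, so this is harmless but worth a sentence. Second, your final displayed bound $C+(M-C)\bigl(1-(1-\epsilon/r)^2\bigr)+o(1)$ only shrinks below $C+\delta$ for small $\epsilon$ when $M\geq C$; if $M<C$ the conclusion is trivial since $u_\alpha\leq M<C+\delta$ everywhere, so you should dispose of that case in one line rather than let the sign of $M-C$ float. With those two remarks added, the argument is complete.
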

\begin{lem}
If for a power series $\displaystyle{\sum_{v}a_vz^v}$, the set $\{a_vw^v:v \ \mbox{ is a multi index}\}$, where $w\in{\mathbb{C}^n}$, is bounded. Then the power series $\displaystyle{\sum_{v}a_vz^v}$ is convergent in polydisk $P(0,w)$.
\begin{proof}
Let B be the bound of the set $\{a_vw^v:v \ \mbox{is a multi index}\}$. And let $z\in{P(0,w)}$, then $\left|z_i\right|<w_i$. That is $\left|\frac{z_i}{w_i}\right|<1$, and therefore the series $\displaystyle{\sum_v\left|\frac{z}{w}\right|^v}$ is convergent. Therefore there exists $k\in{\mathbb{N}}$ such that $$\sum_{|v|\geq k}\left|\frac{z}{w}\right|^v<\frac{\epsilon}{B}.$$
For $|v|\geq k$, consider 
\begin{align*}
\sum_{|v|\geq k}a_vz^v|&\leq \sum_{|v|\geq k}|a_vz^v|\\
&=\sum_{|v|\geq k}|a_vw^v|\left|\frac{z^v}{w^v}\right|\\
&\leq \sum_{|v|\geq k} B \left|\frac{z^v}{w^v}\right|\\
&<\epsilon.
\end{align*}
Hence the series $\displaystyle{\sum_{v}a_vz^v}$ is convergent in $P(0,w)$, since $z\in{P(0,w)}$ was arbitrary.
\end{proof}
\end{lem}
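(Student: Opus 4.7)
The claim is the multi-variable analogue of Abel's lemma for power series, and the natural strategy is to majorize $\sum_v |a_v z^v|$ coordinate-wise by a product of convergent geometric series and conclude by the comparison test.

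First I would fix the bound: let $B > 0$ be such that $|a_v w^v| \le B$ for every multi-index $v$. For any $z \in P(0, w)$, each coordinate satisfies $|z_j| < |w_j|$, so $\rho_j := |z_j|/|w_j| \in [0, 1)$ is well-defined (the degenerate case $w_j = 0$ forces $z_j = 0$, killing every term with $v_j \ge 1$, and so reduces the problem to a lower-dimensional one).

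The key estimate is then
$$|a_v z^v| \;=\; |a_v w^v|\,\prod_{j=1}^n \left(\frac{|z_j|}{|w_j|}\right)^{v_j} \;\le\; B\,\rho^v,$$
where $\rho^v := \rho_1^{v_1}\cdots\rho_n^{v_n}$. Because every term of the majorant is non-negative, I can sum over the unordered multi-index set by any convenient scheme; the cleanest is Tonelli, which gives
$$\sum_{v} B\,\rho^v \;=\; B\prod_{j=1}^n \sum_{v_j = 0}^{\infty}\rho_j^{v_j} \;=\; B\prod_{j=1}^n \frac{1}{1-\rho_j} \;<\; \infty.$$
Comparison then delivers absolute convergence of $\sum_v a_v z^v$ at every $z \in P(0,w)$, which is exactly the claim.

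The only step demanding a moment's care is the reduction of the multi-indexed sum to an iterated product of one-variable geometric series; this rearrangement is legitimate precisely because the bound $B\rho^v$ is non-negative and the resulting product $\prod_j (1-\rho_j)^{-1}$ is finite, so any summation order yields the same value. I expect no real obstacle here: the entire content of the lemma is the observation that a \emph{uniform bound} on $\{|a_v w^v|\}$, which is strictly weaker than convergence of $\sum_v a_v w^v$, already forces convergence on the strictly smaller polydisk $P(0, w)$.
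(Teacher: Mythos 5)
Your proof is correct and follows essentially the same route as the paper's: both rest on the estimate $|a_v z^v| = |a_v w^v|\,|z^v/w^v| \le B\,|z/w|^v$ and comparison with the convergent multi-geometric series. The only difference is cosmetic — you justify the convergence of the majorant explicitly as a product of geometric series via Tonelli and handle the degenerate coordinates $w_j=0$, whereas the paper asserts the convergence of $\sum_v |z/w|^v$ without comment and phrases the conclusion as a tail estimate; the underlying argument is identical.
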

The above mentioned lemma is known as Abel's lemma.
\begin{lem}
Let $f$ be a separately analytic function on a polydisk $\displaystyle{D=\prod_{1\leq i\leq n} D_i}$, where $D_i=\{z_i \in{\mathbb{C}}:\left|z_i\right|<r\}$ are disks in $\mathbb{C}$. Then $f$ is jointly analytic in $D$.
\begin{proof}
By lemma $2.1.3$, we can get a smaller poldisk, say $E\subseteq D$, $\displaystyle{E=\prod_{1 \leq i\leq n-1 }} E_i \times D_n$ such that $f$ is bounded in $E$ with bound, say, $B$, where radius of disk $E_i$ is $\epsilon \leq r$ for $i=1,2,\ldots, n-1$.\\
Since $f$ is analytic as a function of $n-1$ variables, therefore for fixed $z_n$, we have $$f(z)=c_\alpha (z_n)(z')^\alpha \quad \mbox{ for }\quad z'\in{\prod_{1 \leq i\leq n-1 } E_i},$$
where $\displaystyle{c_\alpha (z_n)=\frac{\partial ^{\alpha }}{\partial z'^\alpha } \frac{f(0,z_n)}{\alpha !}}$.
By Cauchy's integral formula in $z'$, \\
$\frac{\partial ^{\alpha }}{\partial z'^\alpha } \frac{f(0,z_n)}{\alpha !}=\frac{1}{(2\pi i )^{n-1}} \int_{C_1}\int_{C_2}\ldots \int_{C_{n-1}} \frac{f(\zeta)}{(\zeta_1-z_1)^{\alpha_1+1}(\zeta_2-z_2)^{\alpha_2+1}\ldots (\zeta_{n-1}-z_{n-1})^{\alpha_{n-1}+1}}d\zeta_1d\zeta_2\ldots d\zeta_{n-1}.$
Proceeding as in Osgood's lemma, again by expanding geometric series and interchanging summation and integration, it can be shown that $c_\alpha (z_n)$ are analytic functions as a function of variable $z_n$.\\
For $0<r_1<r_2<r$ and fixed $z_n$ with $\left|z_n\right|<r$, from the convergence of the power series, we get \begin{equation} \left|c_\alpha (z_n)\right| {r_2}^{\left|\alpha\right|} \rightarrow 0 \quad \mbox{as} \quad \left|\alpha \right|\to \infty.
\end{equation}
Also, since the map $z_n\to c_\alpha (z_n)$ is analytic, therefore $\log |c_\alpha (z_n)|$ is harmonic and therefore subharmonic.
Consider family $(u_{\alpha})$ of subharmonic functions, where $u_{\alpha}(z_n)=\frac{1}{\left|\alpha\right|}\log|c_\alpha (z_n)|.$
From Cauchy's inequality, we have $$\left|c_\alpha (z_n)\right|\epsilon^{\left|\alpha\right|} <B,$$\\
which gives $\log \left|c_\alpha (z_n)\right| < \log B- \left|\alpha\right| \log\epsilon $.
Therefore we have $u_{\alpha}(z_n)<\left[\frac{\log B}{\left|\alpha\right|}-\log \epsilon\right]$. And thus
the family $(u_{\alpha})$ is uniformly bounded above.
Also from (2.1), we get that 
\begin{equation}
\left|u_{\alpha}(r_2)\right|<\log\left(\frac{1}{r_2}\right)\ \mbox{ as }\ \left|\alpha\right| \to \infty.
\end{equation}
For $r_1<r_2$, $\log r_1<\log r_2$.
Thus 
$\log \left(\frac{1}{r_2}\right) <\log \left(\frac{1}{r_1}\right)$.
Let $\delta_0$ be such that $\log \left(\frac{1}{r_2}\right)+\delta_0 \leq \log \left(\frac{1}{r_1}\right)$.
By, Hartog's lemma on subharmonic functions for $\delta_0$ and compact set $\{z_n:|z_n|<r_1\}$, we get that for large $|\alpha|$ $$u_{\alpha}(z_n)\leq \log \left(\frac{1}{r_2}\right)+\delta_0.$$
And $\log \left(\frac{1}{r_2}\right)+\delta_0 \leq \log \left(\frac{1}{r_1}\right)$ gives that $$u_{\alpha}(z_n)\leq \log \left(\frac{1}{r_1}\right).$$
Thus for large $\left|\alpha\right|$ $$\left|c_{\alpha}(z_n)\right|r_1^{\left|\alpha\right|}\leq 1$$
uniformly on $|z_n|<r_1$. By Abel's lemma, the series $f(z',z_n)=\displaystyle{\sum_{\alpha}c_{\alpha}(z_n)z'^{\alpha}}$ converges absolutely and uniformly on any compact subset of $D$. Hence $f$ is analytic in the polydisk $D$, since eacch term in the series is analytic.
\end{proof}
\end{lem}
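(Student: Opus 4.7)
The plan is to combine the three preparatory lemmas (the boundedness lemma, Hartogs' subharmonic lemma, and Abel's lemma) with the inductive hypothesis, following the strategy already flagged before the statement: induct on $n$, with the base case $n=1$ trivial and the case $\mathbb{C}^{n-1}$ assumed. First I would apply Lemma 2.1.3 to obtain a smaller polydisk $E = \prod_{i=1}^{n-1} E_i \times D_n \subseteq D$ with each $E_i$ a disk of radius $\epsilon \le r$ on which $f$ is bounded, say $|f| \le B$. Fixing $z_n \in D_n$, the inductive hypothesis applied to $z' \mapsto f(z',z_n)$ on $\prod_{i=1}^{n-1} E_i$ gives a power-series expansion $f(z',z_n) = \sum_\alpha c_\alpha(z_n)(z')^\alpha$. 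The coefficients are given by the Cauchy integral in the $z'$-variables, and by unwinding that integral exactly as in Osgood's lemma (expanding the geometric series for $1/(\zeta_n - z_n)$ and interchanging sum and integral via Fubini) one checks that each $c_\alpha$ is a holomorphic function of $z_n$ on $D_n$.

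The heart of the argument is to extract a \emph{uniform} bound on $|c_\alpha(z_n)|$ from two pieces of information. Cauchy's inequality on $E$ gives $|c_\alpha(z_n)|\,\epsilon^{|\alpha|} \le B$, so the sequence $u_\alpha(z_n) := \frac{1}{|\alpha|} \log|c_\alpha(z_n)|$ is a family of subharmonic functions on $D_n$ that is uniformly bounded above. On the other hand, separate convergence of $\sum_\alpha c_\alpha(z_n)(z')^\alpha$ on the full polydisk of radius $r$ for each fixed $z_n$ gives $|c_\alpha(z_n)| r_2^{|\alpha|} \to 0$ for every $r_2 < r$, so $\limsup_{|\alpha|\to\infty} u_\alpha(z_n) \le \log(1/r_2)$ pointwise. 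Picking $r_1 < r_2 < r$ and $\delta_0 > 0$ with $\log(1/r_2)+\delta_0 \le \log(1/r_1)$, Hartogs' subharmonic lemma applied with $C = \log(1/r_2)$ on the compact disk $\{|z_n| \le r_1\}$ upgrades this pointwise limsup to a uniform estimate $u_\alpha(z_n) \le \log(1/r_1)$ on $\{|z_n|\le r_1\}$ for all sufficiently large $|\alpha|$, i.e. $|c_\alpha(z_n)| r_1^{|\alpha|} \le 1$ uniformly there.

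Given this uniform estimate, Abel's lemma finishes the job: on any polydisk $\{|z'_i| \le \rho_i\} \times \{|z_n| \le r_1\}$ with $\rho_i < r_1$ the series $\sum_\alpha c_\alpha(z_n)(z')^\alpha$ is dominated in absolute value by a convergent geometric series, hence converges absolutely and uniformly. Each term $c_\alpha(z_n)(z')^\alpha$ is jointly analytic (a monomial in $z'$ times a holomorphic function of $z_n$), and a uniform limit of jointly analytic functions is jointly analytic. Since $r_1$ can be taken arbitrarily close to $r$ and the polyradii $\rho_i$ likewise, this covers all of $D$ and yields joint analyticity of $f$ on $D$.

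The main obstacle I expect is the subharmonic step: the estimates $|c_\alpha|\epsilon^{|\alpha|} \le B$ and $|c_\alpha| r_2^{|\alpha|} \to 0$ only combine into the uniform Abel-type bound after one passes through the $\log$ to a subharmonic family and invokes Hartogs' lemma; getting the three radii $\epsilon$, $r_1$, $r_2$ and the slack $\delta_0$ consistent is the delicate bookkeeping. Everything else is either an immediate consequence of the preparatory lemmas already in the excerpt or routine manipulation of Cauchy integrals of the type performed in Osgood's lemma.
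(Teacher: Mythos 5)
Your proposal is correct and follows essentially the same route as the paper's own proof: Lemma 2.1.3 for the bounded subpolydisk, the inductive expansion $f=\sum_\alpha c_\alpha(z_n)(z')^\alpha$ with coefficients analytic in $z_n$, Cauchy's inequality plus the pointwise decay $|c_\alpha(z_n)|r_2^{|\alpha|}\to 0$ feeding into Hartogs' lemma on the subharmonic family $u_\alpha=\frac{1}{|\alpha|}\log|c_\alpha|$, and Abel's lemma to conclude. Your write-up is in fact slightly cleaner on the radius bookkeeping (stating the $\limsup$ hypothesis of Hartogs' lemma explicitly), but the argument is the same.
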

Now we finally present proof of Hartog's result, which follows easily from the several lemmas stated above.
\begin{thm}$(\textbf{\mbox{Hartog's theorem on separate analyticity}})$ Let $f$ be a complex valued function defined on an open subset $U$ of $\mathbb{C}^n$. Suppose that $f$ is separately analytic, then $f$ is analytic as a function of $n$ variables.
\begin{proof}
Let $z\in{U}$, choose $r>0$ such that the polydisk of radius $2r$ around z is contained in $U$. Then by lemma $2.1.3 $, we get that there is a polydisk $D$ containing z and a smaller polydisk $E$ inside $D$ such that $f$ is analytic on $E$. Following the argument of lemma $2.1.6 $, we see that the power series for $f$ on smaller polydisk converges on the larger polydisk $D$. Since $D$ contains the point $z$, we get that $f$ is analytic at $z$. Hence $f$ is analytic on $U$. 
\end{proof}
\end{thm}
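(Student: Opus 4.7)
The plan is to reduce the global claim to the local statement already established for polydisks. Joint analyticity is a local property, so it suffices to show that for each point $z^0 \in U$ there is a polydisk $P(z^0, \rho) \subset U$ on which $f$ admits a convergent multi-power-series expansion. Since $U$ is open, I can fix $z^0 \in U$ and choose $r > 0$ small enough that the closed polydisk $\overline{P(z^0, 2r)}$ lies in $U$. Restriction to this closed polydisk preserves separate analyticity.

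First I would invoke Lemma 2.1.3 on the closed polydisk $\overline{P(z^0, 2r)}$ to produce a sub-polydisk $E = \prod_{i=1}^{n-1} E_i \times D_n$ on which $f$ is bounded, where each $E_i$ is a closed disk of positive radius contained in the $i$th factor and $D_n$ is the full $n$th factor. This is the step that uses Baire's category theorem applied to the exhaustion $\bigcup_{r=1}^\infty \Omega_r$ of the product of the first $n-1$ closed disks by level sets of $\sup_{z_n}|f(\cdot, z_n)|$, together with the inductive hypothesis of Hartog's theorem in $\mathbb{C}^{n-1}$ to guarantee that each $\Omega_r$ is closed.

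Next I would apply Lemma 2.1.6 to the polydisk $D$ centered at $z^0$ whose first $n-1$ factors are the $E_i$ of the previous step and whose $n$th factor is a disk strictly smaller than $D_n$. That lemma bundles the technical heart of the argument: using Cauchy's integral formula in $(z_1,\dots,z_{n-1})$ one expands $f(z', z_n) = \sum_\alpha c_\alpha(z_n) (z')^\alpha$ with coefficients $c_\alpha(z_n)$ holomorphic in $z_n$; then $u_\alpha(z_n) = \tfrac{1}{|\alpha|}\log|c_\alpha(z_n)|$ form a family of subharmonic functions uniformly bounded above, to which Hartog's lemma on subharmonic functions applies; finally Abel's lemma converts the resulting coefficient bounds into absolute and uniform convergence of the series on compact subsets of $D$, yielding joint analyticity.

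The main obstacle is conceptually already absorbed into Lemma 2.1.6 (subharmonic control of coefficients) and Lemma 2.1.3 (Baire argument supplying boundedness), so the role of the theorem itself is essentially a packaging step: for an arbitrary $z^0 \in U$ we produce a polydisk neighborhood $D \ni z^0$ on which Lemma 2.1.6 delivers a convergent power series representation, and since $z^0 \in U$ was arbitrary, $f$ is jointly analytic throughout $U$. The only delicate point to check carefully is that the polydisk $D$ supplied by Lemma 2.1.3 can be arranged to contain $z^0$ in its interior, which follows by starting from the larger polydisk $P(z^0, 2r)$ and shrinking only the radii of the first $n-1$ factors around $z^0$ itself.
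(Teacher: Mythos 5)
Your proposal follows essentially the same route as the paper: Lemma 2.1.3 (the Baire category argument) supplies boundedness of $f$ on a sub-polydisk, and Lemma 2.1.6 (subharmonic control of the coefficients $c_\alpha(z_n)$ via Hartog's lemma, then Abel's lemma) upgrades this to joint analyticity on the full polydisk around the chosen point, so the theorem itself is, as you say, a local packaging step. One correction to your closing remark: the sub-polydisk $E$ produced by Baire's theorem cannot in general be arranged to contain $z^0$, since you have no control over where the set $\Omega_{r_0}$ with non-empty interior sits inside $\prod_{i\leq n-1} D_i$; but this is harmless, because the Hartogs-lemma step inside Lemma 2.1.6 extends the domain of convergence of the series $\sum_\alpha c_\alpha(z_n)(z')^\alpha$ from the small factors $E_i$ out to radius $r_1$ arbitrarily close to $r$, hence to essentially all of the original polydisk $P(z^0,r)$, which contains $z^0$ by construction. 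It is that extension of the region of convergence, not a relocation of $E$, that makes the local conclusion go through.
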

\section{Relation with theory of one variable}
Let $n>1$, consider $\mathbb{C}^n$ consisting of $n$-tuples of complex variables. One of the most important results of theory of one complex variable is Cauchy's integral formula. Several important results are followed from this inequality. We will now prove this integral formula for functions analytic on domains in $\mathbb{C}^n$.
\begin{thm}
$\textbf{\mbox{(Cauchy's Integral Formula)}}$ If $f$ is analytic in a domain $D$ of $\mathbb{C}^n$. Let $a=(a_1,\ldots, a_n)$ be a point in $D$. Let $$ \overline{\triangle}=\{(z_1,\ldots, z_n):|z_i-a_j|\leq r_j\}$$ be a closed polydisk centered at $a$ which is contained in $D$. Then for $z\in{\triangle}$
$$f(z)=\left(\frac{1}{2\pi i}\right)^n\int_{\partial \triangle} \frac{f(\zeta_1,\ldots, \zeta_n)}{(\zeta_1-z_1)\ldots (\zeta_n-z_n)}d\zeta_1\ldots d\zeta_n.$$
\begin{proof}
We will prove it by induction on $n$. For $n=1$, Cauchy integral formula holds for functions of one complex variable.
Assume the result for functions defined on domains of $\mathbb{C}^{n-1}$.\\
Let $(z_1,\ldots, z_n)\in{\triangle}$ be fixed. Since $f$ is analytic as a function of variable $z_1$, we can write $$f(z_1,z_2,\ldots, z_n)=\frac{1}{2\pi i}\int_{C_1}\frac{f(\zeta_1,z_2,\ldots, z_n)}{\zeta_1-z_1}d\zeta_1,$$
where $C_1$ is a circle centered at $a_1$ bounding $z_1$.\\
Now for any fixed point $\zeta_1$ on the circle $C_1$, $f(\zeta_1,z_2,\ldots, z_n)$ is an analytic function of $n-1$ variables $z_2,\ldots, z_n$. Therefore it follows from inductive hypothesis that $$f(\zeta_1,z_2,\ldots, z_n)=\frac{1}{(2\pi i)^{n-1}}\int_{C_2} \ldots \int_{C_n}\frac{f(\zeta_1,\zeta_2,\ldots, \zeta_n)}{(\zeta_2-z_2)\ldots (\zeta_n-z_n)}d\zeta_2\ldots d\zeta_n$$ for $z_i\in{\triangle_0}=\{(z_2,\ldots , z_n):|z_j-a_j| \leq r_j, j=2,\ldots ,n \}$
where $C_i: 2\leq i \leq n$ is a circle centered at $a_i$ bounding $z_i$.\\
On substitution we get $$f(z_1,z_2,\ldots, z_n)=\frac{1}{(2\pi i)^{n}}\int_{C_1} \ldots \int_{C_n}\frac{f(\zeta_1,\zeta_2,\ldots, \zeta_n)}{(\zeta_1-z_1)(\zeta_2-z_2)\ldots (\zeta_n-z_n)}d\zeta_1\ldots d\zeta_n.$$
\end{proof}
\end{thm}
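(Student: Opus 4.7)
The plan is to prove the formula by induction on $n$, iterating the classical one-variable Cauchy formula one coordinate at a time; this is exactly the same iteration that appears in the proof of Osgood's lemma, but now carried out on an analytic function rather than on a separately analytic continuous one. The base case $n=1$ is the one-variable Cauchy integral formula recorded in Chapter~1, so no work is needed there. For the inductive step I would assume the formula for analytic functions on domains of $\mathbb{C}^{n-1}$ and derive it for $\mathbb{C}^n$.

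Fix $z=(z_1,\ldots,z_n)\in\triangle$. Since joint analyticity of $f$ on $D$ entails separate analyticity in each variable, the single-variable slice $\zeta_1\mapsto f(\zeta_1,z_2,\ldots,z_n)$ is analytic on a neighbourhood of the closed disk $|\zeta_1-a_1|\le r_1$. Applying the one-variable Cauchy formula along the boundary circle $C_1=\{|\zeta_1-a_1|=r_1\}$ produces
$$f(z_1,z_2,\ldots,z_n)=\frac{1}{2\pi i}\int_{C_1}\frac{f(\zeta_1,z_2,\ldots,z_n)}{\zeta_1-z_1}\,d\zeta_1.$$
Next, for each fixed $\zeta_1\in C_1$ the function $(w_2,\ldots,w_n)\mapsto f(\zeta_1,w_2,\ldots,w_n)$ is analytic on the polydisk $\{|w_j-a_j|<r_j,\ j=2,\ldots,n\}$ in $\mathbb{C}^{n-1}$, so the inductive hypothesis yields
$$f(\zeta_1,z_2,\ldots,z_n)=\frac{1}{(2\pi i)^{n-1}}\int_{C_2}\cdots\int_{C_n}\frac{f(\zeta_1,\zeta_2,\ldots,\zeta_n)}{(\zeta_2-z_2)\cdots(\zeta_n-z_n)}\,d\zeta_2\cdots d\zeta_n.$$
Substituting this into the $\zeta_1$-integral and combining the $1/(2\pi i)$ factors then formally delivers the asserted $n$-fold integral over the distinguished boundary $\partial\triangle=C_1\times\cdots\times C_n$.

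The main technical point in this plan is legitimising the substitution, which amounts to an appeal to Fubini's theorem to rewrite the iterated integral as an integral over the product $C_1\times\cdots\times C_n$. For this I would check that the integrand is jointly continuous on this compact distinguished boundary: the numerator $f$ is jointly continuous there because $\overline{\triangle}\subset D$ and analytic functions on $D$ are continuous on $D$, while each denominator $\zeta_j-z_j$ is bounded below on $C_j$ since for the fixed $z\in\triangle$ one has $|\zeta_j-z_j|\ge r_j-|z_j-a_j|>0$. Joint continuity on a compact product makes the iterated and multiple integrals coincide, and the induction closes. Aside from this Fubini step, the only other subtlety worth mentioning is the tacit use of separate analyticity inside the induction, which is automatic here because $f$ is assumed jointly analytic on $D$.
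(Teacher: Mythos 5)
Your proposal is correct and follows essentially the same route as the paper: induction on $n$, applying the one-variable Cauchy formula in the first coordinate and the inductive hypothesis to the remaining $n-1$ variables for each fixed $\zeta_1\in C_1$. The only difference is that you explicitly justify the final substitution via Fubini's theorem using joint continuity of the integrand on the compact distinguished boundary, a step the paper passes over silently; this is a welcome addition of rigor rather than a change of method.
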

It follows from Cauchy's integral formula that any analytic function $f$ in $D$ has partial derivatives of all order with respect to each variables $z_j$ (j=1,...,n). From the proof of Osgood's lemma we note that any analytic function on $D\subseteq \mathbb{C}$ can be written in the form $f(z)=\sum_va_vz^v$;\\
 where $$a_v=a_{v_1v_2\ldots v_n}=\frac{1}{(2\pi i)^n }\int_{C}  \frac{f(\zeta)}{\zeta_1^{v_1+1}\zeta_2^{v_2+1} \zeta_3^{v_3+1} \ldots \zeta_n^{v_n+1}}d\zeta_1 d\zeta_2 \ldots d\zeta_n
.$$
From Cauchy's integral formula we can write any partial derivative $\displaystyle{\frac{\partial^{v_1+\ldots +v_n}}{\partial z_1^{v_1}\ldots \partial z_n^{v_n}}}$ of $f$ as $$\frac{v_1!\ldots v_n !}{(2\pi i)^n }\int_{C}  \frac{f(\zeta)}{(\zeta_1-z_1)^{v_1+1}\ldots (\zeta_n-z_n)^{v_n+1}}d\zeta_1 d\zeta_2 \ldots d\zeta_n.$$
This gives that $$a_v(z)=\frac{1}{v_1!\ldots v_n!}\frac{\partial ^{v}f(0)}{\partial z_1^{v_1}\ldots \partial z_n^{v_n}},$$
and which in turn gives analogus of Cauchy's representation formula for theory of one complex variable.
\begin{rem}
Another proof for cauchy's integral formula follows from proof of Osgood's lemma.
\end{rem}
\begin{thm} $(\mbox {\textbf{Identity Theorem}})$
Let $f$ and $g$ be analytic in an open connected subset $D$ of $\mathbb{C}^n$. If $f(z)=g(z)$ for all $z\in{U}$; where $U$ is a non-empty open subset of $D$. Then $f(z)\equiv g(z)$ in $D$.
\begin{proof}
Let us denote the interior of the set $\{z\in{D}:f(z)=g(z)\}$ by $E$. Therefore $E$ is open. If we show that $E$ is relatively closed, then using connectivity of $D$ we can say that $E=D$ and then we are done.\\
Let us denote by $ \overline{E}$, closure of $E$ in $\mathbb{C}$ and let $w\in{D\cap  \overline{E}}$. Since $D$ is open, we can choose a number $r>0$ such that the polydisk, $P(w,r)$ around $w$ of radius $r$ is contained in $D$. Also since $w\in{ \overline{E}}$, therefore there is $w'\in{E}$ such that $w'\in{\triangle \left(w,\frac{r}{2}\right)}$; where $\triangle \left(w,\frac{r}{2}\right)$ is a polydisk centered at $w$ of radius $\frac{r}{2}$.\\
It is clear that $f(z)-g(z)$ is analytic in $\triangle \left(w',\frac{r}{2}\right)$ and hence has a power series representation around point $w'$ having radius of convergence greater than or equal to $\displaystyle{\frac{r}{2}}$.\\ Since $w'\in{E}$, $f(z)-g(z)=0$ in an open neighbourhood of $w'$, therefore all the coefficients in the power series expansion of $f$ vanish. Thus $$f(z)-g(z)\equiv 0$$ on $\triangle (w',r)$ and noting that $w\in{\triangle (w',r)}$, we get that $w\in{E}$. Which shows that $E$ is relatively closed in $D$. Thus $E$ is an open and closed subset of $D$. By using connectivity of $D$, we get that $D=E$. Hence $f(z)\equiv g(z)$ on $D$.
\end{proof}
\end{thm}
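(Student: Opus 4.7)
The plan is to use a standard connectedness argument applied to the difference $h := f - g$, which is analytic on $D$. Define
\[
E := \mbox{interior of } \{z \in D : h(z) = 0\}.
\]
By hypothesis, $U \subseteq E$, so $E$ is non-empty. By construction $E$ is open. If I can also show that $E$ is relatively closed in $D$, then connectedness of $D$ forces $E = D$, which is exactly the conclusion $f \equiv g$ on $D$.

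So the heart of the proof is to establish that $E$ is relatively closed in $D$. I would take $w \in D \cap \overline{E}$ and find a polydisk around $w$ on which $h$ vanishes identically. Since $D$ is open, choose $r > 0$ with $P(w, r) \subseteq D$. Because $w \in \overline{E}$, there exists $w' \in E$ lying in the polydisk $P(w, r/2)$. Then $h$ is analytic on $P(w', r/2)$ and admits a convergent power series expansion
\[
h(z) = \sum_{\alpha} a_\alpha (z - w')^\alpha \quad \text{for } z \in P(w', r/2),
\]
where the coefficients $a_\alpha$ are (up to factorials) the partial derivatives of $h$ at $w'$, by the Cauchy integral formula discussed in the previous section.

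Since $w' \in E$, there is an open neighbourhood of $w'$ on which $h \equiv 0$. Differentiating the identically zero function yields that every partial derivative of $h$ at $w'$ vanishes, so every coefficient $a_\alpha = 0$. Hence $h \equiv 0$ on the entire polydisk $P(w', r/2)$. Noting that the triangle inequality gives $w \in P(w', r/2)$ (since $|w - w'| < r/2$ componentwise can be arranged), we conclude that $h$ vanishes on an open neighbourhood of $w$, so $w \in E$. This shows $E$ is relatively closed in $D$, completing the argument.

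The main technical point to be careful about is the propagation step: namely, knowing that $h$ vanishes on a \emph{neighbourhood} of $w'$ (not merely at the single point $w'$) is what forces all Taylor coefficients at $w'$ to vanish, and this in turn allows $h \equiv 0$ to spread across the entire polydisk of convergence centred at $w'$, which may extend past $w'$ well beyond the original neighbourhood where $h$ was known to vanish. This is the mechanism by which the zero set is enlarged and $E$ becomes relatively closed. In one variable the analogous propagation is usually done via isolated zeros, but here it is crucial that we use the multivariable power series representation together with the choice of the radius $r/2$ so that the polydisk of convergence around the nearby point $w'$ still engulfs $w$.
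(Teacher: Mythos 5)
Your proposal is correct and follows essentially the same route as the paper: define $E$ as the interior of the zero set of $h=f-g$, show it is open and relatively closed via the power series expansion of $h$ about a nearby point $w'\in E$ whose Taylor coefficients all vanish, and conclude by connectedness. You are in fact slightly more careful than the paper's own write-up, which omits the remark that $U\subseteq E$ guarantees $E\neq\emptyset$ and which states the final polydisk as $\triangle(w',r)$ where the radius $r/2$ you use is what the convergence argument actually supports.
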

\begin{rem}
In theory of one complex variable, we study that zero set of an analytic funtion contains no limit point. In other words, zero set of an analytic function of one complex variable contains isolated points. But this is far from being true in function theory of several complex variables.\\
\end{rem}
\textbf{Example}: Consider function $f:\mathbb{C}^2\to \mathbb{C}$ defined as $$f(z,w)=z\quad \mbox{for} \quad z,w \in {\mathbb{C}}.$$
It is easy to see that $f$ is analytic on $\mathbb{C}^2$.
Here zero set of of $f$ is given by $$\{(0,w):w\in{\mathbb{C}}\}$$ contains no isolated points. As any neighbourhood of a zero point $(0,w_0)$ will definitely contain points of the form $(0,w):w\neq w_0$.
Contrary to the case of one complex variable, zero set of an analytic function in a domain $D\subseteq \mathbb{C}^n,n\geq 2$, contains no isolated points.

Identity theorem says that a the zero set of a non-zero analytic map on an open connected set can not have non-empty interior. This is also true in the theory of one complex variable. In one variable complex theory, something stronger is true. Which is that zeroes of a non-zero analytic function are isolated. This does not hold in domains in $\mathbb{C}^n;n\geq 2$, it can be seen by the previous example. More generally, consider function $f:\mathbb{C}^n\to \mathbb{C}$ defined as $$f(z_1,\ldots,z_n)=z_1.$$
Zeroes of $f$ is given by the set $\{(0,z_2,\ldots,z_n):z_i\in{\mathbb{C}, i=2,\ldots,n}\}$; which does not contain isolated points. In fact, in general we have the result that a function analytic in a domain of $\mathbb{C}^n;n\geq 2$ has no isolated zeroes. We will prove this result by using Hurwitz's theorem. We first state Hurwitz's theorem and then using it we will prove that an analytic function $\mathbb{C}^n;n\geq 2$ can not have isolated zeroes. 
\begin{thm}
$\mbox{(\textbf{Hurwitz's theorem})}$ Let $G\subseteq \mathbb{C}$ be open and connected. Let $f_n$ be a sequence of analytic functions on $G$ such that $f_n\to f$ uniformly on compact subsets of $G$, where $f$ is analytic on $G$. Assume that $f$ is not identically zero on a closed ball $\overline{B(a,r)}\subseteq G$ and does not vanish on $|z-a|=r$. Then there exists an $N\in{\mathbb{N}}$ such that for all $n\geq N$, $f_n$ and $f$ have the same number of zeroes in $B(a,r)$.   
\end{thm}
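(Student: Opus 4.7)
The plan is to prove Hurwitz's theorem via the argument principle, reducing the claim about coincidence of zero counts to the convergence of two contour integrals whose common limit is integer-valued. Let $\gamma$ denote the circle $|z-a|=r$, oriented positively. Since $f$ is continuous and nonvanishing on the compact set $\gamma$, I would first record that $m := \min_{z \in \gamma} |f(z)| > 0$. By the uniform convergence of $f_n \to f$ on the compact set $\gamma$, one can choose $N_0$ large enough that $|f_n(z) - f(z)| < m/2$ for all $z \in \gamma$ and $n \geq N_0$; in particular, $|f_n(z)| \geq m/2 > 0$ on $\gamma$, so each such $f_n$ is zero-free on $\gamma$ and the argument principle applies.

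The next step is to express both zero counts as integrals. Writing $Z(f)$ for the number of zeros of $f$ inside $\gamma$ (counted with multiplicity), the argument-principle-type theorem stated earlier in Chapter~1 gives
\[
Z(f) \;=\; \frac{1}{2\pi i}\int_\gamma \frac{f'(z)}{f(z)}\,dz, \qquad Z(f_n) \;=\; \frac{1}{2\pi i}\int_\gamma \frac{f_n'(z)}{f_n(z)}\,dz.
\]
I would then show that $f_n'/f_n \to f'/f$ uniformly on $\gamma$. Uniform convergence of $f_n \to f$ on compact subsets of $G$ yields uniform convergence of the derivatives $f_n' \to f'$ on $\gamma$ (this follows from the Cauchy integral formula applied to $f_n - f$ on a slightly larger disk contained in $G$). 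Combined with the uniform lower bound $|f_n|, |f| \geq m/2$ on $\gamma$ for $n \geq N_0$, a standard estimate
\[
\left| \frac{f_n'}{f_n} - \frac{f'}{f} \right| \;\leq\; \frac{|f_n' - f'|}{|f_n|} + \frac{|f'|\,|f - f_n|}{|f_n|\,|f|}
\]
gives the desired uniform convergence of the integrands on $\gamma$.

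Passing to the limit under the integral sign then yields $Z(f_n) \to Z(f)$. Because both sides are nonnegative integers, this forces $Z(f_n) = Z(f)$ for all $n$ beyond some $N \geq N_0$, which is the conclusion. The only mildly delicate point is the derivation of $f_n' \to f'$ uniformly on $\gamma$ from the hypothesis of uniform convergence on compact sets; this is the main obstacle in that it requires choosing an auxiliary compact neighborhood of $\gamma$ inside $G$ (possible because $G$ is open and $\overline{B(a,r)} \subseteq G$) and invoking Cauchy's formula for derivatives, but it is otherwise routine.
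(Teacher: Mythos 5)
Your proof is correct. Note that the paper only states Hurwitz's theorem and uses it as a tool (to show that analytic functions on domains of $\mathbb{C}^n$, $n\geq 2$, have no isolated zeroes); it supplies no proof of its own, so there is nothing to compare against. Your argument is the standard one via the argument principle: the uniform lower bound $m/2$ on $\gamma$, the algebraic identity behind your estimate for $\left|\frac{f_n'}{f_n}-\frac{f'}{f}\right|$, and the passage to the limit in the integer-valued integrals all check out, and you correctly flag the only nontrivial ingredient, namely that $f_n'\to f'$ uniformly on $\gamma$ via Cauchy's formula on a slightly larger disk. The one point worth making explicit is that the counting formula you quote from Chapter~1 presupposes finitely many zeroes and that each interior zero has winding number $1$ about $\gamma$; both hold here because $f\not\equiv 0$ on the connected set $G$ forces its zeroes to be isolated, hence finite in the compact set $\overline{B(a,r)}$, and $\gamma$ is a positively oriented circle.
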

\begin{rem}
This result does not hold if $f$ has zeroes on boundary of the disk. For example consider a sequence of	 analytic functions ($f_n$), defined on open unit disk $D\subseteq \mathbb{C}$ given by $$f(z)=z-1+\frac{1}{n};\ z\in{\mathbb{C}}.$$
$f_n$ converges to an analytic function $f$ given by $$f(z)=z-1.$$ $f$ has no zeroes in the disk $D$, but each $f_n$ has a zero in $D$, namely, $1-\frac{1}{n}$. Here Hurwitz's theorem does not hold since $f$ vanishes at a point of the unit circle $\partial D$.
\end{rem}
\begin{thm}
Let $\Omega \subseteq \mathbb{C}^n,n\geq 2$ be a region. Then $f$ has no isolated zeroes.
\begin{proof}
Assume that $f$ has a zero at $z_0\in{\Omega}$, that is $f(z_0)=f(z_1,\ldots,z_n)=0$. Consider a sequence of functions $g_k$ defined on $\Omega_1$ as $$g_k(z)=f\left(z,z_2+\frac{1}{k}+\ldots +z_n+\frac{1}{k}\right),.$$ where $\Omega_1=\{z_1:(z_1,z_2,\ldots, z_n)\in{\Omega}\}.$
Then clearly $g_k$ is a sequence of analytic functions which converges to an analytic function $g$ given by $$g(z)=f(z,z_2,\ldots,z_n).$$ We see that $g(z_1)=0$, let $B(x_0,r)$ be a neighborhood of $z_0$ such that $g\neq 0$ on $\big(B(x_0,r)\setminus \{x_0\}\big) \cup \partial B$ (we can choose such a neighborhood since zeroes of an analytic functions in domains of $\mathbb{C}$ are isolated) . Therefore by Hurwitz's theorem, there is an integer $K \in{\mathbb{N}}$ such that $g_k$ and $g$ have same number of zeroes in $B(x_0,r)$, therefore we get $$g_k\left(z_1^k\right)=0\ \mbox{for} \ z_1^k\in{\mathbb{C}};\  \left|z_1-z_1^k\right|< r .$$ That is $$f\big(z_1^k,z_2+\frac{1}{k}+\ldots +z_n+\frac{1}{k}\big)=0.$$ If $\displaystyle{r\leq \frac{n}{\epsilon}}$, then for $k\geq K$ with $\displaystyle{k>\frac{n}{\epsilon}}$, we see that 
\begin{align*}
||\left(z_1^k,z_2+\frac{1}{k}+\ldots +z_n+\frac{1}{k}\right)-\left(z_1,z_2,\ldots,z_n\right)||&= \left|z_1^k-z_1\right|+\left|z_2+\frac{1}{k}-z_2\right|+\ldots +\left|z_n+\frac{1}{k}-z_n\right|\\
&<\frac{\epsilon}{n}+\frac{\epsilon}{n}\ldots +\frac{\epsilon}{n}\\
&=\epsilon.
\end{align*}
In case $\displaystyle{\frac{\epsilon}{n}<r}$, then applying Hurwitz's theorem on $\displaystyle{B'\left(z_0,\frac{\epsilon}{n}\right)}$ and following the same procedure as above, we can get the same inequality as above.
Thus we see that $f$ has zeroes arbitrarily close to $(z_1,z_2,\ldots,z_n)$ and since $(z_1,z_2,\ldots,z_n)$
 was an arbitrary zero, we get that zeroes of $f$ are not isolated.
\end{proof}
\end{thm}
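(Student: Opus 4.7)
The plan is to argue by contradiction, using a one-variable slice of $f$ together with Hurwitz's theorem, essentially mirroring the approach in the excerpt but making the logical structure explicit. Suppose $z_0 = (z_1^0, z_2^0, \ldots, z_n^0) \in \Omega$ is an isolated zero of $f$. Choose a polydisk $P(z_0, \rho) \subset \Omega$ small enough that $z_0$ is the only zero of $f$ inside it; this is what we will eventually contradict.

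First, I would consider the one-variable slice
\[
g(w) = f(w, z_2^0, \ldots, z_n^0),
\]
defined on the open set $\Omega_1 = \{w \in \mathbb{C} : (w, z_2^0, \ldots, z_n^0) \in \Omega\}$, which contains $z_1^0$ and is connected on the component containing $z_1^0$. There are two cases. If $g \equiv 0$ on the component of $\Omega_1$ containing $z_1^0$, then $f$ vanishes on an entire complex line through $z_0$, and every neighbourhood of $z_0$ meets this line in infinitely many points, immediately contradicting isolation. So we may assume $g \not\equiv 0$; then by the one-variable theory, zeros of $g$ are isolated, so we can shrink and choose $r \in (0, \rho)$ with $g(z_1^0) = 0$ and $g(w) \neq 0$ for $0 < |w - z_1^0| \leq r$.

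Next, I would perturb the frozen coordinates. Define the sequence of one-variable analytic functions
\[
g_k(w) = f\!\left(w,\, z_2^0 + \tfrac{1}{k},\, \ldots,\, z_n^0 + \tfrac{1}{k}\right),
\]
for $k$ large enough that $(z_2^0 + 1/k, \ldots, z_n^0 + 1/k)$ lies in the appropriate projection of $\Omega$. By joint continuity of the analytic function $f$ on the compact disk $\overline{B(z_1^0, r)}$, one has $g_k \to g$ uniformly on $\overline{B(z_1^0, r)}$. Since $g$ vanishes (to some finite order) at $z_1^0$ and is nonzero on $|w - z_1^0| = r$, Hurwitz's theorem applies and yields, for all sufficiently large $k$, a zero $w_k \in B(z_1^0, r)$ of $g_k$. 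The point
\[
\zeta_k = \left(w_k,\, z_2^0 + \tfrac{1}{k},\, \ldots,\, z_n^0 + \tfrac{1}{k}\right)
\]
is then a zero of $f$; crucially, $\zeta_k \neq z_0$ because the coordinates $2,\ldots,n$ have been shifted by a nonzero amount, and $\zeta_k \to z_0$ as $k \to \infty$ (indeed $|w_k - z_1^0| < r$ can be made arbitrarily small by first choosing $r$ small). This produces a sequence of zeros of $f$ in $P(z_0, \rho)$ distinct from $z_0$, contradicting the assumed isolation.

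The only genuinely delicate point is the dichotomy between the two cases for $g$: the perturbation argument only gives useful information when the slice is not identically zero, and in the opposite case we must produce the non-isolated zeros directly from the vanishing on the whole slice. A minor technicality is verifying that the perturbed base points remain in $\Omega$ for large $k$ (which follows because $\Omega$ is open), and that the convergence $g_k \to g$ is uniform on the chosen closed disk (which follows from continuity of $f$ on the compact product of $\overline{B(z_1^0, r)}$ with the sequence of base points together with its limit). Everything else is a clean invocation of Hurwitz's theorem as stated above.
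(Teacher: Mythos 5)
Your proposal is correct and follows essentially the same route as the paper: slice $f$ in the first variable, perturb the remaining coordinates by $\tfrac{1}{k}$, and apply Hurwitz's theorem to produce zeros of $f$ arbitrarily close to (and distinct from) the given zero. In fact your version is tighter than the paper's, since you explicitly handle the case where the slice $g$ vanishes identically on its component (where Hurwitz's theorem is inapplicable but non-isolation is immediate) and you note why the perturbed zeros differ from $z_0$; the paper passes over both points silently.
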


\chapter{Invertibility and Jacobian}
 
In this chapter, notion of Jacobian of a function is introduced. We will discuss relation of Jacobian of a function with the injectivity of the function. We will discuss inverse function theorem for functions analytic on domains of $\mathbb{C}$. Then we will discuss Lewy's theorem for harmonic mappings. We will also discuss the generalization of Lewy's theorem for pluriharmonic functions given by M. Naser. (See ~\cite{lewy3} and ~\cite{naser3}).
\section{One-dimensional case}
 We begin this section by giving definition and some properties of Jacobian of a function defined on $\mathbb{C}^n$.\\
\textbf{\underline{Jacobian of functions on $\mathbb{C}^n$}}\\
Let $\Omega \subseteq \mathbb{C}^n$ be a region
and $f:\Omega \to \mathbb{C}^n$ be any differentiable map,
then $f$ can be written as $(f_1,f_2,\ldots ,f_n)$;
where $f_i:\Omega \rightarrow \mathbb{C}$, $(1\leq i \leq n)$ is given by
$$f(z)=(f_1(z),f_2(z),\ldots,f_n(z)).$$
Then for any $z\in {\Omega}$
$$f(z+h)=f(z)+hf'(z)+o(|h|^2),$$ for h near origin in $\mathbb{C}^n$.
For $1\leq k \leq n$,
let $h=\lambda e_k=(0,0,\ldots ,\lambda ,0\ldots ,0)$; where $\lambda $ is an arbitrary scalar. Then 
\begin{align*}
f(z+\lambda e_k)&=f(z)+\lambda e_k f'(z)+o\left(\left|\lambda e_k\right|^2\right)\\
&=f(z)+\lambda e_k f'(z)+o(\left|\lambda \right|^2).\  \mbox{(since} \ |e_k|=1 \  \mbox{for \ all} \  k)
\end{align*}
Thus $ \frac{f\left(z+\left(0,0,\ldots,\lambda,0,\ldots ,0 \right)\right)-f(z)}{\lambda}=f'(z)e_k+o\left(\frac{\left|\lambda\right|^2}{\lambda}\right)$.
Taking limits as $\lambda \rightarrow 0$, we get-
$$\left(D_k f\right)(z)=f'(z)e_k$$
or 
\begin{align*}
f'(z)e_k&=\left(D_k f\right)(z)\\
&=\sum_{j=1}^{n}(D_kf_j)(z)e_j.
\end{align*}
Which gives $k$-th column vector in the matrix representation of map $f'$ with respect to the standard basis of $\mathbb{C}^n$.
Matrix representation of $f'$, called Jacobian matrix of $f$ is given by

$$\begin{bmatrix}
\frac{\partial f_1}{\partial z_1} & \frac{\partial f_1}{\partial z_2}&\ldots & \frac{\partial f_1}{\partial z_n}\\
\frac{\partial f_2}{\partial z_1} & \frac{\partial f_2}{\partial z_2}&\ldots & \frac{\partial f_2}{\partial z_n}\\
\ldots & \ldots & \ldots\\
\frac{\partial f_n}{\partial z_1} & \frac{\partial f_n}{\partial z_2}&\ldots & \frac{\partial f_n}{\partial z_n}\\
\end{bmatrix}.$$
\\
Determinant of this matrix, denoted as $J_{f}(z)$ is called Jacobian of $f$. Note that since determinant is a continuous function, therefore if $J_f(z)\neq 0$, then either $J_f> 0$ or $J_f(z)<0$. If $J_f> 0$ we say $f$ sense preserving or orientation preserving and if $J_f(z)<0$, then $f$ is said to be sense reversing or orientation reversing.\\
\underline{\textbf{Note}}:
Denoting variables $z_i$ as $x_i+y_i$ and $f_i$ as $u_i+v_i$,
consider $f$ as a map $$f:\mathbb{R}^{2n}\to \mathbb{R}^{2n} \ \mbox{defined as}$$ 
$$f(w)=(u_1(w),v_1(w),\ldots,u_n(w),v_n(w)),$$ where $w=(x_1,y_1,\ldots,x_n,y_n).$
Then Jacobian matrix of $f$ is given by\\
$$\begin{bmatrix}
\frac{\partial u_1}{\partial x_1} & \frac{\partial u_1}{\partial y_1}&\ldots &\frac{\partial u_1}{\partial x_n}& \frac{\partial u_1}{\partial y_n}\\
\frac{\partial v_1}{\partial x_1} & \frac{\partial v_1}{\partial y_1}&\ldots &\frac{\partial v_1}{\partial x_n}& \frac{\partial v_1}{\partial y_n}\\
\ldots & \ldots & \ldots&\ldots &\ldots\\
\frac{\partial v_n}{\partial x_1} & \frac{\partial v_n}{\partial y_1}&\ldots &\frac{\partial v_n}{\partial x_n}& \frac{\partial v_n}{\partial y_n}\\
\end{bmatrix}.$$
Jacobian of this matrix, denoted as $J_{\mathbb{R}(f)}$ is called real Jacobian of $f$.
By applying appropriate row or column transformations, $J_{\mathbb{R}(f)}$ is same as
$$\mbox{det}\begin{bmatrix}
\left(\displaystyle{\frac{\partial u_i}{\partial x_j}}\right) & \left(\displaystyle{\frac{\partial u_i}{\partial y_j}}\right)\\
 \ldots & \ldots\\
\left(\displaystyle{\frac{\partial v_i}{\partial x_j}}\right) & \left(\displaystyle{\frac{\partial v_i}{\partial y_j}}\right)\\
\end{bmatrix},$$
where the blocks in the matrix are $n\times n$ matrices with real entries. So if we assume that $f$ is analytic, then it is analytic in each variable separately.
Therefore by Cauchy Riemann equations for each $1\leq j\leq n$, we get that
$$\frac{\partial u_i}{\partial x_j}=\frac{\partial v_i}{\partial y_j}\ \mbox{ and}
 \ \frac{\partial u_i}{\partial y_j}= - \frac{\partial v_i}{\partial x_j}.$$
Now again using appropriate row or column transformations involving $ i$, and using these Cauchy Riemann equations observe that $J_{\mathbb{R}(f)}$ is same as
$$det\begin{bmatrix}
\left(\displaystyle{\frac{\partial u_i}{\partial x_j}+ i \frac{\partial v_i}{\partial x_j}}\right) & \left(\displaystyle{\frac{\partial u_i}{\partial x_j}- i \frac{\partial v_i}{\partial x_j}}\right )\\
 \ldots & \ldots\\
\left(\displaystyle{\frac{\partial v_i}{\partial x_j}}\right) & \left(\displaystyle{\frac{\partial v_i}{\partial x_j}}\right)\\
\end{bmatrix}.$$
Now Subtracting columns on the right block from $ i $ times the left blocks, we get that
$$J_{\mathbb{R}(f)} =
 det\begin{bmatrix}
\left(\displaystyle{\frac{\partial f_i}{\partial x_j}}\right)& 0\\
 \ldots & \ldots\\
\left(\displaystyle{\frac{\partial v_i}{\partial x_j}}\right) & \left(\displaystyle{\frac{\partial  \overline{f_i}}{\partial x_j}}\right)\\
\end{bmatrix}.$$
So we get that $ J_{\mathbb{R}}(f)=|f(z)|^2$.\\
\begin{thm} Let $f:\Omega \subseteq \mathbb{C} \rightarrow \mathbb{C}$ be analytic. Then $f$ is locally one-one at $z_0$ if and only if the Jacobian $J_f(z_0)=f'(z_0)\neq 0$.
\begin{proof}
Let $f$ be analytic and locally one-one at $z_0\in{\Omega}$, let $V\subseteq \Omega$ be such that $f\lvert_V$ is one-one. Denote this restriction by $g$. Let us assume that $g'(z_0)=0$. Thus $z=z_0$ is a zero of $g(z)-g(z_0)$ of order $k$, $k>1$. Therefore by theorem $1.1.7$ by sufficiently small $\epsilon>0$ there exists $\delta >0$ such that for $\zeta$ with $\displaystyle{\left|g(z_0)-\zeta\right|<\delta}$, the equation $$g(z)=\zeta$$ has exactly $k$ simple roots in the disk $|z-z_0|<\epsilon$. Which gives that $g$ can not be one-one, which is a contradiction. Thus $g'(z_0)=f'(z_0)\neq 0$.\\
Conversely, assume that $f'(z_0)\neq 0$, $z_0\in{\Omega}$. Thus the function  $f(z)-f(z_0)$ has a zero of order $1$ at $z=z_0$. And therefore again by theorem $1.1.7$, for sufficiently small $\epsilon >0$ there is a $\delta>0$ such that for $\zeta$ with $\left|f(z_0)-\zeta\right|<\delta$ (denote this ball by $D$), the equation $$f(z)=\zeta$$ has exactly one root in the disk $B=\{z:|z-z_0|< \epsilon \}$. Take $U=f^{-1}(D)\cap B$, it is easy to check that $f$ is one-one on $U$. Hence we obtain that $f$ is locally one-one at $z_0$.

\end{proof}
\end{thm}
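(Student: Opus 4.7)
The plan is to deduce both directions from Theorem 1.1.7 (the local mapping theorem), which states that if $f(z)-f(a)$ has a zero of order $m$ at $a$, then for sufficiently small $\epsilon,\delta>0$, each $\zeta$ with $|\zeta-f(a)|<\delta$ is attained exactly $m$ times (as simple roots) inside $B(a,\epsilon)$. Since in one dimension the Jacobian of $f$ coincides (up to the factor $|f'(z_0)|^2$ arising in the $\mathbb{R}^2$ picture) with $f'(z_0)$, the condition $J_f(z_0)\neq 0$ is equivalent to $f'(z_0)\neq 0$, so it is enough to work with the complex derivative.

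For the forward direction, I would argue by contraposition. Assume $f'(z_0)=0$; then $f(z)-f(z_0)$ has a zero at $z_0$ of some order $m\geq 2$. On any neighborhood $B(z_0,\epsilon)$ however small, Theorem 1.1.7 supplies a $\delta>0$ such that every $\zeta$ with $0<|\zeta-f(z_0)|<\delta$ has exactly $m\geq 2$ distinct preimages inside $B(z_0,\epsilon)$. Pick any such $\zeta$: its two distinct preimages witness that $f$ fails to be injective on the neighborhood, so $f$ cannot be locally one-one at $z_0$.

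For the converse, assume $f'(z_0)\neq 0$, so $f(z)-f(z_0)$ has a simple zero at $z_0$ (order $m=1$). Apply Theorem 1.1.7 to obtain $\epsilon,\delta>0$ such that every $\zeta$ with $|\zeta-f(z_0)|<\delta$ has exactly one preimage in $B(z_0,\epsilon)$. Set $D=B(f(z_0),\delta)$ and $U=f^{-1}(D)\cap B(z_0,\epsilon)$. Then $U$ is open (intersection of an open set with the preimage of an open set under the continuous map $f$) and contains $z_0$; moreover any two points of $U$ with the same image would be two distinct preimages of the same $\zeta\in D$ in $B(z_0,\epsilon)$, violating the uniqueness guaranteed by Theorem 1.1.7. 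Hence $f|_U$ is injective, i.e., $f$ is locally one-one at $z_0$.

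The only slightly subtle step is the converse: one must be careful to intersect with $f^{-1}(D)$ so that the values stay in the region where the order-$1$ conclusion of Theorem 1.1.7 applies. Once that is done, both directions reduce to a clean invocation of the local mapping theorem, and no extra machinery (such as Rouché or the argument principle) is needed beyond what has already been quoted in Section 1.1.
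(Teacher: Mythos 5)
Your proposal is correct and follows essentially the same route as the paper: both directions are reduced to Theorem 1.1.7, with the forward implication obtained by noting that a zero of order $m\geq 2$ forces $m$ distinct preimages of nearby values (you phrase this as contraposition, the paper as a contradiction, which is the same argument), and the converse obtained by the identical construction $U=f^{-1}(D)\cap B(z_0,\epsilon)$. Your added care in excluding $\zeta=f(z_0)$ when extracting the $m\geq 2$ distinct preimages is a small precision the paper glosses over, but it does not change the substance.
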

Now, we will discuss the above result for harmonic maps.
$\mbox{That is},$ for a one-one harmonic map on $\mathbb{C}$, Jacobian does not vanish. We will need the following definitions.
\begin{defi}
For a map $u:U\subseteq \mathbb{R}^n\rightarrow \mathbb{R}$, gradient of $u$, denoted as $\mathsf{grad}\  u$, is defined as $\displaystyle{\left(\frac{\partial u}{\partial x_1},\frac{\partial u}{\partial x_2},\ldots , \frac{\partial u}{\partial x_n}\right)}$.
\end{defi}
\begin{rem} Let $u:U\subseteq \mathbb{R}^2\rightarrow \mathbb{R}$ be any map. Then Taylor expansion of $u$ about a point $(a_1,a_2,\ldots , a_n)\in {\mathbb{R}^n}$ is given by \\
$u(x_1,x_2,\ldots , x_n)=u(a_1,a_2,\ldots , a_n)+(x_1-a_1)u_{x_1}(a_1,a_2,\ldots , a_n)+(x_2-a_2)u_{x_2}(a_1,a_2,\ldots , a_n)\\+\ldots +(x_n-a_n)u_{x_n}(a_1,a_2,\ldots , a_n)+\frac{1}{2!}\left[\sum_{i,j=1}^{n}(x_i-a_i)(x_j-y_j)u_{x_ix_j}(a_1,a_2,\ldots , a_n)\right]+\ldots $\\
In particular, Taylor expansion of $u$ around origin $(0,0)\in {\mathbb{R}^2}$ is given by
\begin{align*}
u(x,y)&=u(0,0)+xu_x(0,0)+yu_y(0,0)+\frac{1}{2!}\bigg[(x-a)^2u_{xx}(0,0)\\
&\ \ \ \ \ \ +(x-a)(y-b)u_{xy}(0,0)+(y-b)(x-a)u_{yx}(0,0)+(y-b)^2u_{yy}(0,0)\bigg]+\ldots.
\end{align*}
\end{rem}
\begin{thm}
Let the mapping $u=(u_1,u_2):U\subseteq \mathbb{R}^2\rightarrow \mathbb{R}^2$ be one-one and continuous in a neighbourhood $U$ of origin $(0,0)\in {\mathbb{R}^2}$ and let $u(0,0)=(0,0)$. If the function $u_1$ is harmonic, then  $\mathsf{grad}\ u_1(0,0) \neq 0$.
\begin{proof}
Let us assume that $\mathsf{grad}\ u_1(0,0) = 0$.\\
$\mbox{That is} \ \displaystyle{\left(\frac{\partial u_1}{\partial x}(0,0),\frac{\partial u_1}{\partial y}(0,0)\right)=0}$,
so the gradient of $u_1$ takes form
\begin{align*}
u_1(x,y)&=u_1(0,0)+\frac{1}{2!}\bigg[(x-a)^2u_{1xx}(0,0)+(x-a)(y-b)u_{1xy}(0,0)\\
&\ \ \ \ \ \ \ +(y-b)(x-a)u_{1yx}(0,0)+(y-b)^2u_{1yy}(0,0)\bigg]+\ldots\\
&=\frac{1}{2!}\bigg[(x-a)^2u_{1xx}(0,0)+(x-a)(y-b)u_{1xy}(0,0)\\
&\ \ \ \ \ \ \ \  +(y-b)(x-a)u_{1yx}+(y-b)^2u_{1yy}(0,0)\bigg]+\ldots
\end{align*}
(since $u(0,0)=(0,0) \ \mbox{implies that} \ u_1(0,0)=0$)\\
So, we get that order of zero of $u_1$ at $(0,0)$ is $\geq 2$, say k. Now, since $u_1$ is harmonic, therefore $u_1$ locally has a harmonic conjugate, say $v_1$ such that $u_1+ i v_1$ is analytic, denote this function by $f$.
So, $u_1=\Re f$, where $f$ is a function analytic on $U$.
Consider the set $M=\{x\in U:u_1(x)=0\}$.
$M$ consists of $k$ smooth arcs, $\displaystyle{\gamma_{i},\ \left(i=1,\ldots,k\right)}$ which intersect at 0. To see this note that $u_1$ can locally be written as real part of an analytic function $f$ and $\mathsf{grad}\  u_1(0)=0$ implies that $f'(0)=0$, which gives that $z=0$ is a zero of order $k\geq 1$. Thus $f$ can be written as $f(z)=z^kg(z)$, $g(z)\neq 0$, which gives that $M$ consists of $k$ number of smooth arcs. Since for $x\in{M}$,  $u_1(x)=0$ and $u(0,0)=(0,0)$, we notice that the image of each of $\gamma_i$ under the map $u_2$ is an interval on the $u_2$ axis containing the origin (image of each such arc under $u_2$ consists origin, since $(0,0)\in{M}$ and $u(0,0)=(0,0)$). Thus we get that origin separates the set $M$ into $2k$ non intersecting parts in the plane $x=(x_1,x_2)$, and the image of orgin in the $u$ plane separates the image of $M$ into two parts. And since $2k>2$, we get a contradiction. Hence we get that $\mathsf{grad} \ u_1(0,0)\neq 0$. 
 
\end{proof}
\end{thm}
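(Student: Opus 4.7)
The plan is to argue by contradiction. Suppose that $\mathsf{grad}\, u_1(0,0) = 0$. Since $u_1$ is harmonic in a neighbourhood of the origin, which we may shrink to a disk and hence assume simply connected, there exists a harmonic conjugate $v_1$ so that $f = u_1 + i v_1$ is analytic there. Normalizing $v_1(0,0) = 0$ and using $u_1(0,0) = 0$ gives $f(0) = 0$. The vanishing of $\mathsf{grad}\, u_1(0,0)$ together with the Cauchy--Riemann equations forces $f'(0) = 0$, so the origin is a zero of $f$ of some order $k \geq 2$. Write $f(z) = z^{k} g(z)$ with $g$ analytic and $g(0) \neq 0$.

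Next, I would describe the zero set $M = \{x \in U : u_1(x) = 0\}$ near the origin. The idea is that, since $f$ is a $k$-to-$1$ branched covering near $0$ with $f(0) = 0$, the preimage under $f$ of the imaginary axis (which is where $\Re f = 0$) is locally the union of $2k$ smooth arcs through the origin, pairwise meeting only at $0$. Concretely, after a local change of coordinates $w = f(z)^{1/k}$, valid since $g(0) \neq 0$, the relation $\Re(w^{k}) = 0$ gives exactly $2k$ rays $\arg w = (2j+1)\pi/(2k)$, $j = 0, \dots, 2k-1$, and pulling back produces $2k$ disjoint smooth arcs $\gamma_1, \dots, \gamma_{2k}$ emanating from the origin. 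In particular, $M \setminus \{0\}$ has at least $2k \geq 4$ connected components near $0$.

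Now I would use the injectivity of $u$ to derive a contradiction. On each $\gamma_i$ we have $u_1 \equiv 0$, so $u(\gamma_i)$ lies entirely on the $u_2$-axis, and $u(0,0) = (0,0)$. By injectivity of $u$, for $t > 0$ the point $u(\gamma_i(t))$ lies on $u_2\text{-axis} \setminus \{0\}$, which has two components (the positive and negative $u_2$-semiaxes). By continuity each $u(\gamma_i \setminus \{0\})$ is connected, so it lies in one of these two semiaxes, and its closure contains the origin. Since $2k \geq 4 > 2$, by the pigeonhole principle at least two arcs, say $\gamma_i$ and $\gamma_j$, map into the same semiaxis. Applying the intermediate value theorem to the continuous maps $t \mapsto u_2(\gamma_i(t))$ and $t \mapsto u_2(\gamma_j(t))$, both of which take values tending to $0$ from the same side, I can find $t_i, t_j > 0$ with $u(\gamma_i(t_i)) = u(\gamma_j(t_j))$, violating injectivity of $u$. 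This contradiction shows $\mathsf{grad}\, u_1(0,0) \neq 0$.

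The main obstacle is justifying cleanly that the zero set $M$ of $u_1$ near the origin decomposes into exactly $2k$ smooth arcs meeting at $0$; everything afterwards is a short topological/injectivity argument. The excerpt already asserts this decomposition via the factorization $f(z) = z^{k} g(z)$, so I would lean on the local normal form $w = f(z)^{1/k}$ (available because $g(0) \neq 0$) to reduce to the explicit case $\Re(w^{k}) = 0$, where the $2k$ rays are visible by inspection.
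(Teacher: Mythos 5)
Your proposal is correct and follows essentially the same route as the paper: pass to the analytic completion $f=u_1+iv_1$, use $\mathsf{grad}\,u_1(0)=0$ to get a zero of order $k\geq 2$ and the factorization $f(z)=z^kg(z)$, deduce that the level set $\{u_1=0\}$ near the origin consists of $2k$ arcs emanating from $0$, and contradict injectivity using that $u$ sends these arcs into the $u_2$-axis. Your pigeonhole-plus-intermediate-value-theorem step is in fact a cleaner justification of the paper's somewhat terse ``$2k>2$ parts versus $2$ parts'' conclusion, but it is the same underlying argument.
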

\begin{cor}$\mbox{\textbf{(Theorem of Lewy)}}$
Let the mapping $u=(u_1,u_2):U\subseteq \mathbb{R}^2\rightarrow \mathbb{R}^2$ be one-one and harmonic in a neighborhood $U$ of origin $(0,0)\in {\mathbb{R}^2}$. Then the Jacobian $\displaystyle{J(x,y)=\left[\frac{\partial (u_1,u_2)}{\partial (x,y)}\right]}$ does not vanish at the origin.
\end{cor}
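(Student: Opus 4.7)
The plan is to deduce the corollary directly from Theorem 3.1.2 by a linear-combination trick. After a translation, we may assume $u(0,0) = (0,0)$, which keeps $u$ harmonic and one-one. Suppose for contradiction that $J(0,0) = 0$. Writing
\[
J(0,0) = \frac{\partial u_1}{\partial x}(0,0)\frac{\partial u_2}{\partial y}(0,0) - \frac{\partial u_1}{\partial y}(0,0)\frac{\partial u_2}{\partial x}(0,0),
\]
this determinant vanishing means that the two gradient vectors $\mathsf{grad}\, u_1(0,0)$ and $\mathsf{grad}\, u_2(0,0)$ are linearly dependent in $\mathbb{R}^2$. Note that Theorem 3.1.2 already applies to $u = (u_1,u_2)$ (since $u_1$ is harmonic) and to $u = (u_2,u_1)$ (the same map is still injective and continuous, and now $u_2$ is the harmonic first coordinate), so both gradients are nonzero. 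Therefore they are genuinely parallel, and I can pick scalars $a,b$, not both zero, with
\[
a\,\mathsf{grad}\, u_1(0,0) + b\,\mathsf{grad}\, u_2(0,0) = 0.
\]

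Next I would build an auxiliary map to feed into Theorem 3.1.2. Choose any $c,d\in\mathbb{R}$ making the matrix $\bigl(\begin{smallmatrix} a & b \\ c & d \end{smallmatrix}\bigr)$ invertible, and let $T:\mathbb{R}^2\to\mathbb{R}^2$ be the corresponding linear isomorphism. Define $\tilde u := T\circ u = (w_1, w_2)$, where $w_1 = a u_1 + b u_2$ and $w_2 = c u_1 + d u_2$. Then $\tilde u$ is still continuous and one-one (composition of an injection with a linear bijection), $\tilde u(0,0) = (0,0)$, and $w_1$ is harmonic as a linear combination of harmonic functions. By construction,
\[
\mathsf{grad}\, w_1(0,0) = a\,\mathsf{grad}\, u_1(0,0) + b\,\mathsf{grad}\, u_2(0,0) = 0.
\]
But Theorem 3.1.2, applied to $\tilde u = (w_1,w_2)$, forces $\mathsf{grad}\, w_1(0,0) \neq 0$, and this contradiction shows $J(0,0)\neq 0$.

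The only real obstacle is verifying that the hypotheses of Theorem 3.1.2 transfer cleanly to $\tilde u = T\circ u$: injectivity and continuity are immediate from $T$ being a linear isomorphism, harmonicity of $w_1$ follows because the Laplacian is linear, and the normalization $\tilde u(0,0)=(0,0)$ follows from $u(0,0)=(0,0)$. The linear-algebra step of producing $(a,b)$ from $J(0,0)=0$ is routine once we know both gradients are nonzero, which is exactly what a second application of Theorem 3.1.2 (with the coordinates of $u$ swapped) provides. No further computation is needed, so the corollary reduces entirely to two invocations of the preceding theorem plus a one-line choice of an auxiliary linear map.
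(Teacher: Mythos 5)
Your proposal is correct and follows essentially the same route as the paper: both reduce the corollary to Theorem 3.1.2 by composing $u$ with a linear isomorphism of the target so that the first component becomes a harmonic function whose gradient vanishes at the origin (the paper takes $v_1 = u_1 - \lambda u_2$, $v_2 = u_2$ with $\lambda = a_1/b_1 = a_2/b_2$, i.e.\ the special case $T=\bigl(\begin{smallmatrix} 1 & -\lambda \\ 0 & 1 \end{smallmatrix}\bigr)$ of your construction). Your version via an arbitrary invertible extension of $(a,b)$ is in fact slightly more careful, since the paper's division by $b_1$ and $b_2$ silently assumes both are nonzero.
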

\begin{proof}
If $u(0,0)=c$, where $c\in {\mathbb{R}^2}$ is non-zero, then by considering map $u'$ given by 
$u'(x,y)=u(x,y)-c$, we get $u'$ is one-one, harmonic with $u'(0,0)=(0,0)$. So without loss of generality we may assume that $u(0,0)=(0,0)$.
So the taylor expansion for $u_1$ about origin in $\mathbb{R}^2$ takes form
$$\displaystyle{u_1(x,y)=xu_{1x}(0,0)+yu_{1y}(0,0)+\ldots}.$$
Writing $a_1$ for $u_{1x}(0,0)$ and $a_2$ for $u_{1y}(0,0)$, we get this representation as 
$$u_1(x,y)=a_1x+a_2y+\ldots.$$
Similarly taylor expansion for $u_2$ about origin in $\mathbb{R}^2$ becomes
$$u_2(x,y)=b_1x+b_2y+\ldots,$$ where $b_1=u_{2x}(0,0)$ and $b_2=u_{2y}(0,0)$
So Jacobian matrix of $u$ is given by
$$\begin{bmatrix}
\displaystyle{\frac{\partial u_1}{\partial x}}&\displaystyle{\frac{\partial u_1}{\partial y}}\\
\displaystyle{\frac{\partial u_2}{\partial x}}&\displaystyle{\frac{\partial u_1}{\partial y}}
\end{bmatrix}\\
=\begin{bmatrix}
a_1+\ldots&a_2+\ldots\\
b_1+\ldots&b_2+\ldots
\end{bmatrix},$$where dots denote terms containing x and y's. So Jacobian of $u$ takes the form
$$J(x)=\left[\frac{\partial (u_1,u_2)}{\partial (x,y)}\right]\\
=a_1b_2-a_2b_1+\ldots,$$ where dots denote the terms containing x and y's.\\
Therefore $$J(0)=a_1b_2-a_2b_1.$$
Assume that the Jacobian of $u$ vanishes at the origin. Therefore we get that
$$a_1b_2-a_2b_1=0.$$
$\mbox{That is} \  \displaystyle{\frac{a_1}{b_1}=\frac{a_2}{b_2}}$, call this ratio as $\lambda$,
then $a_1=\lambda b_1$ and $a_2=\lambda b_2$.
In this case, consider map $v$ defined on $U$ as 
$$v=(v_1,v_2),\ \mbox{ where} \ v_1=u_1-\lambda u_2 \ \mbox{ and} \ v_2=u_2.$$
Consider gradient of $v_1$
\begin{align*}
\mathsf{grad}\ v_1&=\left(\frac{\partial v_1}{\partial x},\frac{\partial v_1}{\partial y}\right)\\
&=\left(\frac{\partial u_1}{\partial x}-\lambda \frac{\partial u_2}{\partial x},\frac{\partial u_1}{\partial y}-\lambda \frac{\partial u_2}{\partial y}\right)\\
&=\big(a_1-\lambda b_1+\ldots,a_2-\lambda b_2+\ldots\big),
\end{align*}
 where dots denotes terms containing x and y's.
Thus we get that $\mathsf{grad}\ v_1$ vanishes at the origin.
Note that $v$ is harmonic, since $u_1$ and $u_2$ are both harmonic.
And since the map $u\rightarrow v$ is one-one and also the map $x\rightarrow u$ is one-one.
Therefore, the map $x\rightarrow v$ is one-one, since composition of one-one maps is one-one.
Also $g_1$ is harmonic, so we get that $\mathsf{grad}\ v_ 1\neq0$ (by theorem $3.1.2$).
Hence we get a contradiction to our assumption that $J(0)=0$.
\end{proof}
\begin{cor}
Let the mapping $u=(u_1,u_2):U\subseteq \mathbb{R}^2\rightarrow \mathbb{R}^2$ be one-one and pluriharmonic in a neighbourhood $U$ of origin $(0,0)\in {\mathbb{R}^2}$. Then the Jacobian $\displaystyle{J(x,y)=\left[\frac{\partial (u_1,u_2)}{\partial (x,y)}\right]}$ does not vanish at the origin.

\end{cor}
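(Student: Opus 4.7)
The plan is to deduce this corollary immediately from the previous one (Lewy's theorem) by observing that in complex dimension one, pluriharmonicity is nothing other than harmonicity. So the whole task reduces to unwinding a definition and invoking a result already in hand.

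First, I would recall the definition of pluriharmonic. A $C^2$ real valued function $u$ on an open subset $\Omega$ of $\mathbb{C}^n$ is pluriharmonic if its restriction to every complex line meeting $\Omega$ is harmonic, equivalently if $\partial^2 u/(\partial z_j \partial \bar{z}_k)=0$ for all indices $j,k$. In the excerpt this is the characterisation already used in the definition of complex harmonic functions (the $n=1$ case of pluriharmonic), namely $\partial^2 u/(\partial z \partial \bar{z})=0$, which is equivalent to the Laplace equation $u_{xx}+u_{yy}=0$.

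Next, I would identify $\mathbb{R}^2$ with $\mathbb{C}=\mathbb{C}^1$ via $(x,y)\mapsto x+iy$. Under this identification $U \subseteq \mathbb{R}^2$ becomes a domain in $\mathbb{C}^1$, and the only complex line through any of its points is $\mathbb{C}$ itself. Hence the single condition that each $u_i$ be harmonic on $U$ in the usual sense coincides with the pluriharmonicity condition; the components $u_1$ and $u_2$ are harmonic real valued functions on $U$.

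With this translation, the hypotheses of the present corollary are literally those of the previous one: $u=(u_1,u_2) : U \subseteq \mathbb{R}^2 \to \mathbb{R}^2$ is one-one on a neighbourhood of the origin and has harmonic components (if $u(0,0)\ne (0,0)$, translate by a constant as in the proof of the previous corollary, which preserves harmonicity, injectivity and the Jacobian). Applying Lewy's theorem then yields $J(0,0)\ne 0$. There is no real obstacle here; the only point to check is the equivalence of pluriharmonic and harmonic in one complex variable, and this is immediate from the characterising differential equation.
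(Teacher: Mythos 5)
Your proposal is correct and is essentially identical to the paper's own proof, which disposes of the corollary in one line by noting that every pluriharmonic map is harmonic and then invoking the preceding corollary (Lewy's theorem). Your additional observation that in one complex variable the condition $\partial^2 u/(\partial z\,\partial\bar z)=0$ is precisely the Laplace equation, so that pluriharmonicity and harmonicity actually coincide on $U\subseteq\mathbb{C}^1$, is a correct (and slightly sharper) justification of the same reduction.
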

\begin{proof}
An immediate corollary to previous result. Since every pluriharmonic map is harmonic.
\end{proof}

\section{Higher dimensional case}

In previous section, we studied Lewy's theorem for harmonic functions. In chapter [5], we will see that Lewy's theorem does not hold in $\mathbb{C}^n\geq 2$. So question comes whether Lewy's theorem holds for    pluriharmonic functions in $\mathbb{C}^n$. We will now study generalization of Lewy's theorem for pluriharmonic functions on $\mathbb{C}^2$. For $n>2$ this problem is open for pluriharmonic functions.\\
Let n be a fixed natural number.
Fix $1\leq j\leq n$.
For $z=(z_1,z_2\ldots, z_n) \in{\mathbb{C}^n}$,
we denote 
$$D_j=\displaystyle{\frac{\partial}{\partial z_j}}\ \mbox{and }\  \displaystyle{{\overline{D_j}}}=\frac{\partial}{\partial  \overline{z_j}},$$
where $\displaystyle{\frac{\partial}{\partial z_j}=\frac{1}{2}\left(\frac{\partial}{\partial x_j}-i\frac{\partial}{\partial y_j}\right)}$ and $\displaystyle{\frac{\partial}{\partial \bar{z_j}}=\frac{1}{2}\left(\frac{\partial}{\partial x_j}+i\frac{\partial}{\partial y_j}\right)}.$\\
With these notations, note that
a mapping $f=u+iv:G\subseteq \mathbb{C}^n\rightarrow \mathbb{C}$ is harmonic if $D_j \circ \overline{D_j}=0 \ \mbox{for all}\  1\leq j \leq n$.
\begin{defi}\textbf{(Pluriharmonic function)}
Let $\Omega \subset \mathbb{C}^n$ be open. And let $f$ be a real valued function defined on $\Omega$ such that $f\in{C^2(\Omega)}$. We say $f$ is pluriharomnic if $$D_j \circ \overline{D_k}f=0 \ \mbox{for all} \  j,k = 1,2,\ldots,n.$$
\end{defi}
\textbf{Remarks}\\
1. A complex valued function $f$ defined on a domain of $\mathbb{C}^n$ is said to be pluriharmonic if real and imaginary parts of $f$ are pluriharmonic. So, if we denote $f=u+iv$, then $f$ is pluriharmonic if $u$ and $v$ are pluriharmonic. That is $$D_j \circ \overline{D_k}u=0\ \mbox{ and} \  D_j \circ \overline{D_k}v=0\ \mbox{ for all} \ 1\leq j,k \leq n,$$ thus $D_j \circ \overline{D_k}f=D_j \circ \overline{D_k}u+iD_j \circ \overline{D_k}v=0$ for all $1\leq i,k \leq n$.\\
2. Let $f$ be an analytic function defined on $\Omega \subseteq \mathbb{C}^n$. Thus
$f$ is analytic in each variable, so we get that $\frac{\partial f}{\partial \bar{z_j}}=0$  $\mbox{for all} \  1 \leq j \leq n$. We have
$\overline{D_j}f=0$ $\ \mbox{for all} \  1 \leq j \leq n$ and therefore
$D_i \circ\overline{D_j}f=0$ $\ \mbox{for all} \  1 \leq i, j \leq n$.
Thus $f$ is complex pluriharmonic.
Therefore every analytic function is complex pluriharmonic.\\
3. We recall that a functin $f$ on a domain in $\mathbb{C}$ is analytic if and only if it is independent of $\bar{z}$ , that is  $$\displaystyle{\frac{\partial}{\partial \bar{z}} f=0}.$$
4. \textbf{Example of a pluriharmonic function which is not analytic}\\
Consider map $f:G\subseteq \mathbb{C} \rightarrow \mathbb{C}$ given by 
$$f(z)=\bar{z},$$
then clearly $f$ is not analytic.
But $\displaystyle{D_1\circ \bar{D_1}f=\frac{\partial}{\partial z} \frac{\partial}{\partial \bar{z}} f=0}$. Therefore $f$ is pluriharmonic but not analytic.\\
5. Let $f$ be a pluriharmonic function defined on $\Omega \subseteq \mathbb{C}^n$.
$\mbox{That is}, \ D_i\circ\bar{D_j}=0$ $\ \mbox{for all} \  1\leq i,j \leq n$.
In particular $D_i\circ \overline{D_i}f=0 \ \mbox{for all} \  1\leq i \leq n$.
And notice that
 \begin{align*}
 D_i\circ \overline{D_i}&=\frac{\partial}{\partial z_i}\frac{\partial}{\partial \bar{z_i}}\\
&=\frac{1}{2}\left(\frac{\partial}{\partial x_i}-i\frac{\partial}{\partial y_i}\right)\frac{1}{2}\left(\frac{\partial}{\partial x_i}+i\frac{\partial}{\partial y_i}\right)\\
&=\frac{1}{4}\left(\frac{\partial ^2}{\partial x_i ^2}+\frac{\partial ^2}{\partial y_i ^2}\right)\ \ \mbox{(since} \ f\in{C^2(\Omega)}) .
\end{align*}
Therefore  $$D_i\circ \overline{D_i}f=0$$
$$\left(\frac{\partial ^2}{\partial x_i ^2}+\frac{\partial ^2}{\partial y_i ^2}\right)f=0 \ \mbox{for all} \  i$$
$$ \left(\frac{\partial ^2}{\partial x_i ^2}+\frac{\partial ^2}{\partial y_i ^2}\right)u + i \left(\frac{\partial ^2}{\partial x_i ^2}+\frac{\partial ^2}{\partial y_i ^2}\right)v =0$$
 $$\left(\frac{\partial ^2}{\partial x_i ^2}+\frac{\partial ^2}{\partial y_i ^2}\right)u=0\ \mbox{
and} \left(\frac{\partial ^2}{\partial x_i ^2}+\frac{\partial ^2}{\partial y_i ^2} \right)v=0 \ \mbox{for all} \  i.$$
We see that $f$ is complex harmonic.
Therefore the class of pluriharmonic functions is contained in the class of complex harmonic functions.\\
6. \textbf{Example of a real valued harmonic map which is not pluriharmonic.}\\
Consider a map $u:G\subseteq \mathbb{C}^2 \rightarrow \mathbb{R}$ defined by
$$\displaystyle{u(z)=u(z_1,z_2)=x_1y_1x_2y_2},$$
then $u$ is harmonic but not pluriharmonic.
\begin{lem}
Let $\phi$ be a real valued function of class $\mathbb{C}^2$ in a domain $\Omega \subseteq \mathbb{C}^n$. Then $\phi$ is pluriharmonic if and only if for any complex line $L$, the restriction of $\phi$ to $L \cap\Omega$ is a harmonic function as a function of one variable on each component of $L\cap \Omega$.
\begin{proof}
Consider any complex line $L=\{ct+b:t\in{\mathbb{R}},\ c,b\in{\mathbb{C}^n}\}$, where $c\neq 0$ is direction of line $L$ passing through the point $b$. For $ct+b\in{\Omega}$, consider function $$\psi(t)=\phi(c_1t+b_1,\ldots, c_nt+b_n).$$
Then $$\frac{\partial ^2\psi}{\partial t\partial \bar{t}}(t)=\sum_{i,j=1}^{n}\frac{\partial ^2\phi}{\partial z_i\partial \bar{z_j}}(ct+b)c_i\bar{c_j},$$
which gives the desired result.
\end{proof}
\end{lem}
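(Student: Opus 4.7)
The plan is to reduce both implications to a single chain-rule computation of $\partial^2\psi/\partial t\,\partial\bar t$, where $\psi$ is the restriction of $\phi$ to a complex line. Fix a complex line $L=\{ct+b : t\in\mathbb{C}\}$ with $c\in\mathbb{C}^n\setminus\{0\}$, and set
$$\psi(t)=\phi(c_1t+b_1,\ldots,c_nt+b_n)$$
for $t$ in the (open) set where $ct+b\in\Omega$. Because each map $t\mapsto c_it+b_i$ is holomorphic, $\partial\bar z_k/\partial t=0$ and $\partial z_k/\partial\bar t=0$, so the Wirtinger chain rule yields
$$\frac{\partial\psi}{\partial t}(t)=\sum_{i=1}^{n}\frac{\partial\phi}{\partial z_i}(ct+b)\,c_i,\qquad \frac{\partial^2\psi}{\partial t\,\partial\bar t}(t)=\sum_{i,j=1}^{n}\frac{\partial^2\phi}{\partial z_i\,\partial\bar z_j}(ct+b)\,c_i\,\bar c_j.$$
Since $\partial^2/\partial t\,\partial\bar t=\tfrac14\Delta_t$, harmonicity of $\psi$ on each component of $L\cap\Omega$ is exactly the vanishing of this double sum.

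The forward direction is then immediate: if $\phi$ is pluriharmonic, every $\partial^2\phi/\partial z_i\,\partial\bar z_j$ vanishes, hence so does the sum for every choice of $c$ and $b$, and $\psi$ is harmonic on each component of $L\cap\Omega$.

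For the converse I would argue pointwise. Fix $p\in\Omega$ and set $a_{ij}=\dfrac{\partial^2\phi}{\partial z_i\,\partial\bar z_j}(p)$. Applying the displayed formula to the line through $p$ with direction $c$ and evaluating at $t=0$, the harmonicity hypothesis gives
$$H(c):=\sum_{i,j=1}^{n}a_{ij}\,c_i\,\bar c_j=0\qquad\text{for every }c\in\mathbb{C}^n.$$
Since $\phi$ is real, a short conjugation calculation shows $\overline{a_{ij}}=a_{ji}$, so $H$ is a Hermitian form; a Hermitian form that vanishes on all vectors is identically zero, so $a_{ij}=0$ for all $i,j$. Since $p$ was arbitrary, $\phi$ is pluriharmonic on $\Omega$.

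The main obstacle is the converse: a priori, the hypothesis gives only one scalar equation per direction $c$, and one must extract from these the full Hermitian matrix of mixed derivatives. The cleanest route is the Hermitian-form/polarization observation above; concretely, testing $H$ on $e_i$, $e_i+e_j$, and $e_i+\sqrt{-1}\,e_j$ and combining the resulting identities recovers each $a_{ij}$ individually without appealing to any deeper machinery.
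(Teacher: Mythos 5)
Your proof is correct and rests on exactly the same identity the paper uses, namely the chain-rule computation $\frac{\partial^2\psi}{\partial t\,\partial\bar t}=\sum_{i,j}\frac{\partial^2\phi}{\partial z_i\partial\bar z_j}(ct+b)\,c_i\bar c_j$. You go further than the paper, which stops at ``which gives the desired result'': your polarization argument (testing on $e_i$, $e_i+e_j$, $e_i+\sqrt{-1}\,e_j$) is precisely the detail needed to justify the converse direction, so your write-up is a completed version of the paper's sketch rather than a different route.
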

\begin{rem}
Let $h$ be a multivalued function defined on an open subset $U$ of $\mathbb{C}^n$. Then for a curve $\gamma$ in $U$, by $\triangle_{\gamma} \ h$, we denote the change in value of $h$ from starting point of $\gamma$ to end point of $\gamma.$ For example let $h$ be the argument function. Then for any circle $\gamma$ in $U$, value of $h$ at the starting point is $2\pi n$, for some $n\in{\mathbb{N}}$, and value of $h$ at the end point is $2\pi n+2\pi$. So the difference is given by $\triangle_{\gamma} h(z)=2\pi$, no matter what branch we  pick. For other multivalued functions, this may not be true.
\end{rem}
\begin{thm}
Let $h$ be a continuous one-one function of the neighborhood $U$ of the point 0 $\in{\mathbb{C}^2}$ into the space $\mathbb{R}^4$, $h=(h_1,h_2,h_3,h_4)$. If $h_1=\Re P+r$, where $P$ is a homogeneous holomorphic polynomial of degree $k\geq 1$ in $\mathbb{C}^2$ and $r=o(\left|z\right|^k)$, then $\mathsf{grad}\ h_1(0)\neq 0$.
\begin{proof}
Let $f=P+r$, without loss of generality we may assume that $P(z_1,0)\neq 0$, since this can be achieved by a non singular linear transformation. Let us denote by $K$ the cone $$\{z\in{U}:\left|P(z)\right|\leq c\left|z\right|^k\},$$
 where constant $c$ is chosen in such a way that the section of $K$ by an arbitrary plane $\{z_2=\mbox{constant}\}$ is connected. Let $\delta $ be so small that the section of $K$ by plane $\{z_2=\delta\}$ is contained in the section of $U$ by the same plane, that is $K\cap \{z_2=\delta\}\subseteq U\cap \{z_2=\delta\}$. Then for any closed curve $\gamma$ in this section, using argument principle,
\begin{equation}
\triangle_{\gamma}Arg\ f=2k\pi l,
\end{equation}
 where $l$ is an integer. Note that $K$ contains every zero of $P$ and therefore $\triangle_{\gamma} Arg \ P=2k\pi l=\triangle_{\gamma}Arg \ f$. Note that $f(z_0,0)=z_1^k(1+o(1)),$ and if $h_1(z_1,0)=0$ then $\Re \ f(z_1,0)=0$. 
In the section $\{z_2=0\}\cap V=\{z:|z_1|<\rho\}$, choose a point $(a_1,0)$ such that $\displaystyle{|a_1|<\frac{\rho}{2}}$ at which $f= h_1 >0$. (for this consider $\gamma$ as a circle of radius $\displaystyle{\frac{\rho}{2}}$, from $(3.1)$ we can choose such a point)\\
In a similar way, we can  decrease $\delta$ such that $h(a_1,z_2)>0$, where $|z_2|<\delta$. Denote the circle $\{\displaystyle{|z_1|=\frac{\rho}{2}},z_2=c\}$ by $\displaystyle{\gamma_{c}}$. Define 
 $$\displaystyle{\alpha (z)= Arg\ f(z),}$$ where $z=(a_1,z_2),|z_2|<\delta$. Choose a branch of argument for which $\displaystyle{|\alpha(z)|<\frac{\pi}{2}}$. Now we will extend this function to the points on the circle, $\gamma_c$. Take a point $(z_1,c)$ on the circle $\displaystyle{\gamma_{c}}.$ Let $\gamma_{z'}$ and $\gamma_{z"}$ be arcs of $\gamma_{c}$ joining $(a_1,c)$ and $(z_1,c)$ in the positive and negative directions respectively. Move along $\gamma_{c}$ in the positive direction until $$\displaystyle{\triangle_{\gamma_{z'}}Arg\ f+\alpha(a_1,c)<k\pi +\frac{\pi}{2}}$$ and in the negative direction until $$\displaystyle{\triangle_{\gamma_{z"}}Arg\ f+\alpha(a_1,c)>k\pi +\frac{\pi}{2}}.$$ We define $$\displaystyle{\alpha(z_1,c)=\triangle_{\gamma_z}Arg \ f(z)+\alpha\left(a_1,c\right),}$$ where $\gamma_z$ is either $\displaystyle{\gamma_{z'}}$ or $\displaystyle{\gamma_{z"}}.$ Now we will extend this function to the points of the set $\big[\left(\{z_2=c\}\cap\{|z_1|<\rho,|z_2|<\delta\}\right)\setminus \{h_1=0\}\big]$, which can be joined, without leaving the set, to the points of the set $\gamma_c \setminus \{h_1=0\}$. \\
 For this, let $\displaystyle{z\in\big[{\left(\{z_2=c\}\cap\{|z_1|<\rho,|z_2|<\delta\}\right)\setminus \{h_1=0\}}\big]}$ and $\lambda$ be the curve joining $z$ to the point $z_0\in{\gamma_c \setminus \{h_1=0\}}$, we define $$\alpha(z)=\alpha(z_0)+\triangle_{\gamma}Arg \ f.$$ First of all notice that if we fix point $z_0$, and if $\lambda_1$ and $\lambda_2$ are two curves joining $z$ and $z_0$, then since starting and end points of both $\lambda_1$ and $\lambda_2$ are same and also $\left|\triangle_{\lambda}Arg \ f\right|<\pi,$ we get that definition of $\alpha$ does not depend on choice of $\lambda$, if $z_0$ is fixed.  Now let us assume that $z$ can be joined to two points $z_0$ and $z'\in{\gamma_c}$ by curves $\lambda_0$ and $\lambda_1$ respectively. Take $\lambda_2=\lambda_0\cup \bar{\lambda_1}$, then $\lambda$ does not intersect $\{h_1=0\}$ and therefore $|\triangle_{\lambda_2}Arg \ f|<\pi$. Let $\lambda_3$ be a curve joining $z_0$ and $z'$ such that $\left|\triangle_{ \lambda_3}\right|\leq k\pi$. (we can choose such a curve since $\triangle_{\gamma_c}Arg \ f=2k\pi)$\\
Take $\lambda=\lambda_2\cup \lambda_3$, then $$\left|\triangle_{\gamma}Arg \ f\right|<k\pi+\pi.$$ And since $k\pi+\pi\leq 2k\pi,$ we ge that $\triangle_{\gamma}Arg \ f=0$ and thus we get that definition of $\alpha$ does not depend on the choice of the point $z_0.$\\
Take $U_1$ as an arbitrary neighborhood of zero contained in $V$. Then $U_1$ intersects every component of the set $\{(z_1,0):|z_1|<\rho,z_1\notin{L}\}$. Since $L$ divides the disk $|z_1|<\rho$ into $2k$ non intersecting open angles with vertices at 0, therefore we get that $\displaystyle{\{\left(z_1,0\right):|z_1|<\rho,z_1\notin{L}\}}$ consists of 2k componets. And $U_1$ intersects each one of them, therefore $U_1\setminus \{h_1=0\}$ consists atleast $2k$ connected components.\\
Notice that here $U_1$ was an arbitrary neighborhood of zero. In particular, take $U_1$ as the inverse image of sufficiently small sphere. Since the plane $h_1=0$ in $\mathbb{R}^4$ divides each sphere into exactly two components, and since $h$ is a homeomorphism, therefore we must have $2k=2$, that is, k=1. Hence we obtain that $P$ is a linear polynomial. Therefore $\mathsf{grad}\ P\neq 0.$ 
\end{proof}
\end{thm}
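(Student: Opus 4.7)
The plan is to argue by contradiction. Suppose $\mathsf{grad}\ h_1(0) = 0$. Since $P$ is homogeneous holomorphic of degree $k\geq 1$ and $r = o(|z|^k)$, the linear part of $h_1$ at the origin comes entirely from $P$; if $k=1$ then $P$ is a nonzero linear polynomial and $\mathsf{grad}(\Re P)(0)\neq 0$, a contradiction. Hence under our hypothesis we must have $k\geq 2$, which is the regime we must rule out.

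The backbone of the argument is topological. Because $h$ is a continuous injection on the neighborhood $U$ of $0\in\mathbb{C}^2\cong\mathbb{R}^4$, it is a homeomorphism onto its image (on a small enough spherical neighborhood, using invariance of domain). The level hyperplane $\{h_1 = 0\}\subset\mathbb{R}^4$ cuts any small ball around $h(0)$ into exactly two open half-balls. Pulling this back through $h$, the set $\{h_1 = 0\}\cap U_1$ must separate any sufficiently small neighborhood $U_1$ of $0$ into exactly two connected components. So the strategy is to exhibit, under the assumption $k\geq 2$, strictly more than two components in $U_1\setminus\{h_1 = 0\}$, producing a contradiction.

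To produce the extra components I would study the zero set of $h_1 = \Re(P+r)$ near $0$. After a linear change of coordinates I may assume $P(z_1,0)\not\equiv 0$, hence $P(z_1,0) = a z_1^k$ with $a\neq 0$, so that $\{\Re P = 0\}\cap\{z_2=0\}$ is a union of $2k$ rays from the origin forming $k$ lines. The task is to show that this angular structure survives the perturbation by $r = o(|z|^k)$. I would do this by restricting $f := P+r$ to slices $\{z_2 = c\}$ and, for $|c|$ small, applying the one-variable argument principle to $f(\,\cdot\,,c)$ on a circle $|z_1|=\rho/2$: the winding number of $f$ around this circle equals $2k\pi$ because the perturbation is of lower order, so $\Re f$ changes sign exactly $2k$ times along it. A continuity-in-$c$ argument, together with an analysis of $\{\Re f = 0\}$ in a narrow cone around $\{P=0\}$, then shows that $\{h_1=0\}\cap U_1$ consists of $k$ real-analytic arcs meeting transversally at $0$, dividing $U_1$ into $2k\geq 4$ pieces. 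This contradicts the two-component separation, forcing $k=1$ and hence $\mathsf{grad}\ h_1(0)\neq 0$.

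The main obstacle I anticipate is the perturbation step: carefully extracting the $k$ arcs of $\{\Re f = 0\}$ near $0$ from the model picture for $\Re P$, and counting components cleanly in $\mathbb{C}^2$ rather than just on a single slice. Defining a single-valued branch of $\arg f$ on the complement of $\{h_1=0\}$ in a thin cylinder, and tracking its jumps across the candidate arcs, seems the most delicate bookkeeping; everything else reduces to standard one-variable complex analysis and the two-component separation argument coming from invariance of domain.
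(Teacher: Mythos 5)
Your proposal follows essentially the same route as the paper's (Naser's) argument: reduce to $P(z_1,0)=az_1^k$ by a linear change of coordinates, use the one-variable argument principle on slices $\{z_2=c\}$ to see that the winding of $f=P+r$ is $2k\pi$, conclude that $U_1\setminus\{h_1=0\}$ has at least $2k$ components, and then invoke the fact that $h$ is a homeomorphism onto its image while the hyperplane $\{h_1=0\}$ separates a small ball in $\mathbb{R}^4$ into exactly two pieces, forcing $2k=2$. The only difference is presentational (you phrase it as a contradiction from $k\geq 2$, the paper derives $k=1$ directly), and the delicate step you flag --- propagating the $2k$-fold angular structure of $\{\Re P=0\}$ through the $o(|z|^k)$ perturbation and counting components in $\mathbb{C}^2$ rather than on one slice --- is exactly the part the paper handles by constructing a single-valued branch of $\operatorname{Arg} f$ off $\{h_1=0\}$.
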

\begin{cor}
Let $h_i,\ \left(i=1,2,3,4\right)$ be pluriharmonic functions in the neighborhood $U$ of the origin in $\mathbb{C}^2$, such that the mapping $h=\left(h_1,h_2,h_3,h_4\right):U\to \mathbb{R}^4$ is one-one. Then the real Jacobian $J_{\mathbb{R}}(h)$ of $h$ does not vanish at the origin.
\begin{proof}
Since $h_1$ is pluriharmonic, therefore locally $h_1$ can be written as real part of an analytic function $f$. Considering taylor series expansion for $f$, and taking P to be homogeneous polynomial corresponding to the lowest degree terms in the expansion and $r=\Re \left(g-P\right)$. We see that $h_1=\Re P+r$. We see that $h_1$ is in the required form as in previous lemma. Now the result follows from the procedure as in Lemma $3.1.2.$
\end{proof}
\end{cor}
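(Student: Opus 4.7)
The plan is to reduce the claim to Theorem $3.2.1$ by the same linear-combination trick that took Theorem $3.1.2$ to Corollary $3.1.1$. Assume, for contradiction, that $J_{\mathbb{R}}(h)(0)=0$. Then the four real gradients $\nabla h_{1}(0),\ldots,\nabla h_{4}(0)$ in $\mathbb{R}^{4}$ are linearly dependent over $\mathbb{R}$, so there are real numbers $\lambda_{1},\ldots,\lambda_{4}$, not all zero, with $\sum_{i}\lambda_{i}\nabla h_{i}(0)=0$. After relabelling I may assume $\lambda_{1}\neq 0$ and apply the invertible linear map $T(y_{1},y_{2},y_{3},y_{4})=\bigl(\sum_{i}\lambda_{i}y_{i},\,y_{2},y_{3},y_{4}\bigr)$ on the target $\mathbb{R}^{4}$. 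The composite $\widetilde h:=T\circ h$ stays continuous and one-one on $U$, has pluriharmonic components, and by construction satisfies $\nabla\widetilde h_{1}(0)=0$.

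Next I would put $\widetilde h_{1}$ in the form required by Theorem $3.2.1$. On a polydisk around $0$, pluriharmonicity gives $\widetilde h_{1}=\Re g$ for some holomorphic $g$; translating the target once more does not destroy injectivity, so I may take $g(0)=0$. Expanding $g=\sum_{k\geq 1}P_{k}$ into homogeneous holomorphic parts, the condition $\nabla\widetilde h_{1}(0)=0$ together with the Cauchy--Riemann equations forces the linear part $P_{1}\equiv 0$. Two subcases remain. If $g\not\equiv 0$, the first surviving term $P:=P_{k}$ has degree $k\geq 2$, the tail $r:=g-P$ satisfies $r=o(|z|^{k})$, and $\widetilde h_{1}=\Re P+\Re r$ matches the hypothesis of Theorem $3.2.1$ for the continuous injection $\widetilde h$; that theorem then forces $\nabla\widetilde h_{1}(0)\neq 0$ (its proof in fact forces $k=1$), contradicting our construction. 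If instead $g\equiv 0$, then $\widetilde h_{1}\equiv 0$, so $\widetilde h(U)$ lies in the hyperplane $\{y_{1}=0\}\subset\mathbb{R}^{4}$ and, after projecting off the first coordinate, yields a continuous injection of the open set $U\subset\mathbb{R}^{4}$ into $\mathbb{R}^{3}$, which is impossible by Brouwer's invariance of domain. Either way I reach a contradiction, so $J_{\mathbb{R}}(h)(0)\neq 0$.

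Essentially all the analytic heavy lifting sits in Theorem $3.2.1$; the work genuinely added in the corollary is the linear-algebraic bookkeeping needed to rearrange the components of $h$ so that the new first coordinate is the real part of a holomorphic function whose leading homogeneous term has degree $\geq 2$. The most delicate moment in the plan is the degenerate subcase in which the chosen combination $\sum_{i}\lambda_{i}h_{i}$ is actually constant, where the local holomorphic representation trivialises; there I expect to lean on invariance of domain, which is the only place in the deduction where the one-one hypothesis is used outside of Theorem $3.2.1$.
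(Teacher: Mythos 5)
Your proposal is correct and follows essentially the same route as the paper: reduce to the gradient theorem for a single pluriharmonic component of a continuous injection (the $h_1=\Re P+r$ theorem) via the linear-combination trick from Lewy's corollary, writing the combination locally as the real part of a holomorphic function and extracting its lowest-degree homogeneous term. Your explicit treatment of the degenerate subcase $g\equiv 0$ via invariance of domain fills a small gap that the paper's terse proof passes over silently, but the underlying argument is the same.
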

\begin{thm}
\textbf{(Inverse function theorem for domains in $\mathbb{C}^n$)}\\
Let $\Omega \subseteq \mathbb{C}^n$ be open and $f:\Omega \rightarrow \mathbb{C}^n$ be holomorphic. Suppose that $f'(z)$ is invertible at some point $z\in \Omega$ (\mbox{that is} \  $J_f(z) \neq 0$).
Then there are neighborhood U of z and V of $f(z)$ such that $f:U \rightarrow V$ is one-one, onto and $f^{-1} :V \rightarrow U$ is holomorphic.
\begin{proof}
We have $\Omega \subseteq \mathbb{C}^n$ is open,
$f:\Omega \rightarrow \mathbb{C}^n$ is holomorphic and $f'(z)$ is invertible at some point $z\in \Omega$.
Since $f'(z)$ is invertible as a linear operator on $\mathbb{C}^n$, it is also invertible as a linear operator on $\mathbb{R}^{2n}$.
Therefore by real version of inverse function theorem, we get that $f$ is one-one map from some neighborhood U of z onto a neighborhood V of $f(z)$, whose inverse is of class $C^1$.
Let us denote this inverse of $f$ by $g$. Then $g$ is of class $C^1$.
Now, the only point to be shown is that $g=(g_1,g_2,\ldots,g_n)$ is a holomorphic map.\\
Since $g$ is inverse of $f$, we get that
$g(f(z))=z\quad \ \mbox{for all} \  z\in U$.
Which implies that $$\quad g_i(f(z))=z_i \quad \ \mbox{for all} \  i; \quad 1\leq i \leq n.$$
Applying $ \overline{D_k}$ and using chain rule, we get 
$$\sum _{j=1}^{n}( \overline{D_k} \overline{f_j}(z))( \overline{D_j}g_i(f(z)))=0.$$
If we denote $c_{ik}=\displaystyle{\sum _{j=1}^{n}\left( \overline{D_k} \overline{f_j}(z)\right)\left( \overline{D_j}g_i(f(z))\right)}$, then matrix $C=AB$, where $A=\left(a_{kj}\right)=\left(\overline{D_k} \overline{f_j}(z)\right)$ and $B=\left(b_{ji}\right)=\left(\overline{D_j}g_i(f(z))\right),$
and since $J_{f}(z)\neq 0$, we get that $A$ is invertible, which gives that $B=0$.
That is, $\overline{D_j}g_i(f(z))=0$ for all $j=1,\ldots, n$, which gives that $g_i$ is analytic. And since $g_i$ was arbitrary, we get that each component of $g$ is analytic. Hence $g$ is analytic.                     
\end{proof}
\end{thm}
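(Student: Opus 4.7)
The plan is to reduce to the real Inverse Function Theorem and then upgrade the resulting $C^1$ local inverse to a holomorphic one. Regarding $f$ as a map $\mathbb{R}^{2n}\to\mathbb{R}^{2n}$, the hypothesis $J_f(z)\neq 0$ translates (via the computation carried out earlier in the chapter, which showed $J_{\mathbb{R}}(f)=|J_f|^2$) into non-vanishing of the real Jacobian determinant at $z$. Hence the classical inverse function theorem for $C^1$ maps on Euclidean space supplies open neighborhoods $U$ of $z$ and $V=f(U)$ of $f(z)$ such that $f:U\to V$ is a bijection with $C^1$ inverse $g=(g_1,\dots,g_n):V\to U$. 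The bijectivity and openness half of the statement is therefore free; all that remains is holomorphy of $g$.

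To verify $g$ is holomorphic, I would show $\overline{D_k}g_i\equiv 0$ on $V$ for every $i,k$. Starting from the identity $g_i(f(z))=z_i$ on $U$, I would apply $\overline{D_k}$ to both sides. Since $g_i$ is real-differentiable but a priori not holomorphic, the complex chain rule gives
\[
0=\overline{D_k}\bigl(g_i\circ f\bigr)(z)=\sum_{j=1}^{n}(D_j g_i)(f(z))\,\overline{D_k}f_j(z)+\sum_{j=1}^{n}(\overline{D_j}g_i)(f(z))\,\overline{D_k}\overline{f_j}(z).
\]
Holomorphy of $f$ kills the first sum, leaving the linear system
\[
\sum_{j=1}^{n}(\overline{D_j}g_i)(f(z))\,\overline{D_k}\overline{f_j}(z)=0,\qquad 1\le i,k\le n.
\]
The coefficient matrix $\bigl(\overline{D_k}\overline{f_j}(z)\bigr)_{k,j}$ is the complex conjugate of the holomorphic Jacobian matrix of $f$, so its determinant is $\overline{J_f(z)}\ne 0$ and the matrix is invertible. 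Shrinking $U$ if necessary so that $J_f\neq 0$ throughout (possible because $J_f$ is continuous and non-zero at $z$), the only solution of the system is $\overline{D_j}g_i(w)=0$ for every $w\in V$ and every pair $(i,j)$, which is exactly the statement that each $g_i$ is holomorphic on $V$.

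The main obstacle, in my view, is not any deep idea but a careful book-keeping issue: one has to make sure the $C^1$ regularity supplied by the real inverse function theorem is strong enough to legitimize the chain rule in the Wirtinger-derivative form above, and one must be careful to distinguish $\overline{D_k}f_j$ (which vanishes by holomorphy) from $\overline{D_k}\overline{f_j}=\overline{D_k f_j}$ (which does not). Once those two points are handled cleanly, the invertibility of the coefficient matrix does all the remaining work, and the conclusion that $f^{-1}$ is holomorphic drops out.
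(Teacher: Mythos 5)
Your proposal is correct and follows essentially the same route as the paper: reduce to the real inverse function theorem via $J_{\mathbb{R}}(f)=|J_f|^2$, then apply $\overline{D_k}$ to $g_i\circ f=z_i$ and use invertibility of the conjugate Jacobian matrix to force $\overline{D_j}g_i\equiv 0$. If anything you are more careful than the paper at two points it glosses over — writing out both terms of the Wirtinger chain rule before holomorphy of $f$ kills the first, and shrinking $U$ so that $J_f\neq 0$ throughout, which is needed to conclude $\overline{D_j}g_i=0$ at every point of $V$ rather than only at $f(z)$.
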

Converse of the above result is also true, we will need some results to prove it. So we will first prove these results and then we will prove converse of the above theorem.
\begin{thm}
\textbf{(Rad\'{o}s theorem)} Let $\Omega \subseteq \mathbb{C}^n$ be a region. Assume that $f:\Omega \to \mathbb{C}$ is continuous and $f$ is analytic in an open subset of $\Omega$ where $f(z)\neq 0$. Then $f$ is analytic on $\Omega$.
\begin{proof}
Without loss of generality we may assume that $f$ is continuous on $\overline{D}$, and analytic on $D$, where $D$ is open unit ball. Let $E$ be the set given by $$E=\{z:f(z)=0\},$$ 
then $f$ is analytic on $D\setminus E$ and $|f|<1.$ Note that restriction of $f$ on boundary $\mathbb{T}=\partial D$ of $D$ is continuous. So we can extend this to a complex harmonic function on $D$ by Poisson integral formula. Let us denote this extension by $g$. Let $\alpha$ be any positive constant. Define $$\phi=\Re(f-g)+\alpha \log |f|.$$ Since $f$ is analytic, we get that $\log|f|$
is harmonic in $D\setminus E$. Also $\Re f$ is harmonic and $\Re g$ is harmonic, since $g$ is complex harmonic. Thus  $\phi$ is harmonic in $D\setminus E$.\\
As $z\to z_0\in{E}$; $z\in{D\setminus E}$ $$\phi(z)\to \infty.$$
When $z\to e^{i\theta}\in{\mathbb{T}}$ $$\phi(z)\to \alpha \log \left|f\left(e^{i\theta}\right)\right|<0,$$
(here we are using that $\log|f|<0$ for $|f|<1$ )\\
therefore we get by maximum priciple for harmonic functions that $$\phi(z)\leq 0.$$ 
Letting $\alpha \to 0$, we get that $$\Re (f-g)\leq 0\ \mbox{in} \ D\setminus E.$$
Again by taking $\alpha$ to be negative constant and repeating the same argument, we will get that $$\Re(f-g)\geq 0\ \mbox{in} \ D\setminus E.$$ Thus $\Re f=\Re g$. Now considering imaginary part of $f$, $g$ and repeating the same argument, we get $\Im f=\Im g$. Thus $$f(z)=g(z)\ \mbox{on} \ D\setminus E.$$ 
By continuity we obtain 
$$f(z)=g(z)\ \mbox{on} \ \overline{D\setminus E}.$$ 
Also, since $g$ is harmonic extension of $f$. So $f=g$ on $\mathbb{T}$. Thus We have $$f(z)=g(z)\ \mbox{on} \ \mathbb{T} \cup \overline{D\setminus E}.$$
Also $f=0$ on $E$ and therefore $f=0$ on $\partial E$, $g=f=0$ on $\partial E$. So by maximum principle for harmonic functions we get that $g=0$ on $E$. Hence $$f\equiv g \ \mbox{on} \ \overline{D}.$$
\end{proof}
\end{thm}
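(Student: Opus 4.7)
The plan is to reduce to the one-variable case on a small disk and to compare $f$ with the harmonic extension of its boundary values, using $\log|f|$ as a barrier that blows up on the zero set. Since analyticity is a local property, fix a point $z_0\in\Omega$ and a small closed polydisk $\overline{P}\subset\Omega$ around it. By scaling I may assume $|f|<1$ on $\overline{P}$. To reduce to $n=1$, it suffices to prove the result when $\Omega$ is a disk in $\mathbb{C}$: once established there, applying it to $f$ restricted to each complex line parallel to a coordinate axis through each point of $P$ gives that $f$ is holomorphic in each variable separately, and then Osgood's lemma (already available in the text) upgrades separate holomorphicity plus continuity to joint holomorphicity.

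So assume $f$ is continuous on $\overline{D}$, holomorphic on $D\setminus E$ where $E=\{f=0\}$, and $|f|<1$ on $\overline{D}$. Let $g$ be the complex harmonic extension of $f|_{\partial D}$ obtained via the Poisson integral, so $g$ is continuous on $\overline{D}$, harmonic on $D$, and agrees with $f$ on $\partial D$. Introduce, for $\alpha>0$, the auxiliary function
\[
\phi_{\alpha}=\Re(f-g)+\alpha\log|f|,
\]
which is harmonic on $D\setminus E$ because $\Re(f-g)$ is harmonic there and $\log|f|$ is harmonic on the open set where $f$ is holomorphic and non-vanishing. On $\partial D$ we have $f=g$ and $|f|<1$, so $\phi_{\alpha}\le 0$; and as $z\to\partial E$ from inside $D\setminus E$, $\log|f(z)|\to-\infty$, so $\phi_{\alpha}\to-\infty$. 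The maximum principle applied on $D\setminus E$ yields $\phi_{\alpha}\le 0$ throughout. Letting $\alpha\to 0^{+}$ gives $\Re(f-g)\le 0$ on $D\setminus E$; repeating with $-\Re(f-g)$ and with $\pm\Im(f-g)$ in place of $\Re(f-g)$ gives $f=g$ on $D\setminus E$.

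By continuity, $f\equiv g$ on $\overline{D\setminus E}$. On the interior of $E$ (if non-empty) $g$ is harmonic, and it vanishes on the topological boundary of this set inside $D$ (since $f=0$ there), so the maximum principle forces $g\equiv 0\equiv f$ on $\mathrm{int}\,E$. Hence $f=g$ on all of $D$, so $f$ is (complex) harmonic on $D$. To upgrade from harmonic to holomorphic, note that the Cauchy--Riemann equations hold for $f$ on the open dense set $(D\setminus E)\cup\mathrm{int}\,E$ (trivially on $\mathrm{int}\,E$ since $f\equiv 0$ there, and by hypothesis on $D\setminus E$); since $f=g$ is smooth on $D$, the partial derivatives are continuous, and so the Cauchy--Riemann equations extend by continuity to all of $D$. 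Therefore $f$ is holomorphic on $D$, completing the one-dimensional case.

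The main obstacle I anticipate is justifying the maximum-principle step: one cannot simply appeal to a compact-closure argument because $\log|f|$ is unbounded near $E$, and $\partial E$ can be topologically wild. The essential trick is that $\alpha\log|f|$ is a \emph{non-positive} barrier on $D\setminus E$ that blows down to $-\infty$ along $E$, so the version of the maximum principle for functions with non-positive limsup at every boundary point applies. A secondary but technical issue is the harmonic-to-holomorphic upgrade across the possibly irregular set $\partial E$; I avoid analyzing $\partial E$ directly and instead rely on density together with the continuity of the partial derivatives of $g$, which is the cleanest way to transfer the Cauchy--Riemann equations.
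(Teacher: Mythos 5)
Your proof is correct, and its core is the same device the paper uses: compare $f$ with the Poisson extension $g$ of its boundary values, and force $\Re(f-g)\le 0$ via the maximum principle applied to $\Re(f-g)+\alpha\log|f|$, where $\alpha\log|f|$ is a non-positive barrier tending to $-\infty$ along the zero set; then let $\alpha\to 0^{+}$ and repeat for $-\Re(f-g)$ and $\pm\Im(f-g)$. (Your variant of running the argument with $-\Re(f-g)$ is cleaner than the paper's ``take $\alpha$ negative,'' and you correctly have $\phi\to-\infty$ on $E$, where the paper has a sign slip.) Where you genuinely go beyond the paper's write-up is in two places that the paper leaves implicit or omits. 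First, you make the several-variables reduction explicit: prove the one-variable case on a disk, apply it to restrictions of $f$ to complex lines parallel to the coordinate axes to get separate holomorphy, and invoke Osgood's lemma (Lemma 2.1.1 of the text) to upgrade to joint holomorphy; the paper simply passes to ``the open unit ball'' without saying in which dimension the Poisson kernel and the harmonicity of $\log|f|$ are being used. Second, and more importantly, the paper's argument ends at $f\equiv g$, i.e., at the conclusion that $f$ is \emph{harmonic}, which is not yet the stated claim; your final step --- that $\overline{\partial}f$ is continuous on $D$ (since $f=g$ is smooth) and vanishes on the dense open set $(D\setminus E)\cup\mathrm{int}\,E$, hence vanishes identically --- is exactly the missing upgrade from harmonic to holomorphic. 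Both of your additions are needed for a complete proof, and both are carried out correctly.
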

For a region $\Omega$ in $\mathbb{C}^n$, we use the notation $H(\Omega)$ for the collection of functions analytic on $\Omega$.
\begin{defi}$(\textbf{\mbox{$H^{\infty}$-Removable}})$
Let $\Omega$ be a region in $\mathbb{C}^n$. A relatively closed subset $E$ of $\Omega$ is said to be $H^{\infty}$- removable in $\Omega$ if every bounded funtion $f\in{H(\Omega \setminus E)}$ has an extension $F\in{H(\Omega)}$. 
\end{defi}
\begin{lem}~\cite{rudin3}
If $\Omega$ is a region in $\mathbb{C}^n$, $g\in{H(\Omega)}$, if $f$ is not identically zero and $$E=\{z\in{\Omega}:g(z)=0\}.$$ Then $E$ is $H^{\infty}-$removable in $\Omega$.
\end{lem}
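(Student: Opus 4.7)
The plan is to deduce this removability statement from Radó's theorem (established just above) together with a slicing argument and Hartog's theorem on separate analyticity. Let $f \in H(\Omega \setminus E)$ be bounded, with $|f| \leq M$ on $\Omega \setminus E$, and note that throughout one reads ``$g$ not identically zero'' (so that $E$ is a proper analytic subset).

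First, I would form the auxiliary function $\phi: \Omega \to \mathbb{C}$ defined by $\phi = fg$ on $\Omega \setminus E$ and $\phi = 0$ on $E$. Because $|\phi(z)| \leq M|g(z)|$ and $g$ is continuous on $\Omega$ with $g|_E = 0$, the function $\phi$ is continuous on all of $\Omega$. Moreover, $\phi$ is holomorphic on the open set $\Omega \setminus E$, which certainly contains $\{z \in \Omega : \phi(z) \neq 0\}$. Applying Radó's theorem to $\phi$, we conclude that $\phi \in H(\Omega)$. The task thus reduces to showing that the quotient $\phi/g$, which equals $f$ on $\Omega \setminus E$ and is bounded there, extends holomorphically across $E$.

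To recover such an extension I would argue locally. Fix $z_0 \in E$ and a small polydisk $P \subset \Omega$ centered at $z_0$. After a unitary change of coordinates I may assume that for each $z'$ in some dense open subset of the polydisk in the first $n-1$ variables, the restriction $w \mapsto g(z', w)$ is not identically zero on the corresponding disk $L_{z'}$; this uses only that $g \not\equiv 0$. On every such slice, $w \mapsto \phi(z',w)/g(z',w)$ is a bounded holomorphic function on $L_{z'}$ minus a discrete set, so the classical one-variable Riemann removable singularity theorem extends it to a holomorphic function on $L_{z'}$ bounded by $M$. Define $F(z', w)$ to be this extension; then $F = f$ on $P \setminus E$, and $|F| \leq M$ on $P$.

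The main obstacle is upgrading these slice-wise extensions to a jointly holomorphic function on $P$. Separate holomorphy of $F$ in the last coordinate is built into the construction, but separate holomorphy in the transverse directions is delicate at points of $E$, where the defining formula $F = \phi/g$ breaks down. I would first establish the continuity of $F$ on $P$ using the uniform bound $M$ together with the fact that $E$ is nowhere dense, and then invoke Osgood's lemma (already proved) on slices where $g$ does not vanish, promoted via Hartog's theorem on separate analyticity, to conclude that $F$ is jointly analytic on $P$. Uniqueness of analytic continuation (Identity Theorem) then ensures that these local extensions patch together into a single $F \in H(\Omega)$ with $F|_{\Omega \setminus E} = f$, proving that $E$ is $H^\infty$-removable.
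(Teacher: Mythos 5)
The paper itself gives no proof of this lemma (it is quoted from Rudin's book), so your attempt can only be measured against the standard argument, whose overall shape — slice in one variable, apply the one-variable Riemann removable singularity theorem on each slice, then reassemble joint holomorphy — you have correctly identified. Your opening move with $\phi = fg$ and Rad\'{o}'s theorem is legitimate but never actually earns its keep: on any slice where $g(z',\cdot)\not\equiv 0$ you could apply one-variable removability directly to the bounded function $f(z',\cdot)$, and knowing $\phi\in H(\Omega)$ does not by itself let you divide by $g$. The first genuine gap is in the slicing: your choice of coordinates only makes $w\mapsto g(z',w)$ non-trivial for $z'$ in a \emph{dense open} subset of the base polydisk, and on the exceptional slices the entire disk $\{z'\}\times L_{z'}$ lies inside $E$, so $f$ is defined nowhere there and your $F$ is simply not defined on those slices. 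Appealing to ``continuity from the uniform bound $M$ together with the fact that $E$ is nowhere dense'' cannot repair this: a function bounded on a dense open set need not extend continuously. The correct move is local and sharper: at a fixed $z_0=(z_0',w_0)\in E$ choose the last coordinate direction so that $w\mapsto g(z_0',w)$ has an isolated zero at $w_0$ (possible since $g\not\equiv 0$ near $z_0$), pick $r>0$ with $g(z_0',w)\neq 0$ on $|w-w_0|=r$, and then shrink the base polydisk so that $g(z',w)\neq 0$ for all $z'$ in it and all $|w-w_0|=r$; then \emph{every} slice is good.

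The second gap is the promotion to joint holomorphy, which you acknowledge is ``delicate'' and then assert rather than prove. Osgood's lemma and Hartog's theorem require separate holomorphy in \emph{all} variables on an open set, and holomorphy of $F$ in the transverse variables at points of $E$ is precisely what your construction does not supply. With the corrected choice of polydisk the problem disappears: set
$$F(z',w)=\frac{1}{2\pi i}\int_{|\zeta-w_0|=r}\frac{f(z',\zeta)}{\zeta-w}\,d\zeta,$$
which is well defined because the circle of integration avoids $E$ over the whole (shrunken) base polydisk. The integrand is jointly holomorphic in $(z',w)$ and continuous in $\zeta$, so $F$ is holomorphic on the polydisk, bounded by the obvious estimate, and agrees with $f$ off $E$ by the one-variable Cauchy formula applied to the slice-wise extensions. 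Your final patching step via the Identity Theorem is then sound, since each overlap meets the dense open set $\Omega\setminus E$.
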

\begin{thm}
If $\Omega \subseteq \mathbb{C}^n$ and $f:\Omega \rightarrow \mathbb{C}^n$ is holomorphic and one-one, then $f'(z)$ is invertible $\ \mbox{for all} \  z\in{\Omega}$.
\begin{proof}
Assume that $f$ is analytic and is one-one. We need to show that Jacobian $J_{f}$ of $f$ has no zeroes in $\Omega$. Since $f$ is one-one, $$f^{-1}(w)=\phi \ \mbox{or} \ f^{-1}(w)={z_w}.$$ 
That is $f^{-1}(w)$ is compact for all $w\in{\mathbb{C}^n}$, therefore we get that $f$ is an open map.\\
Let $\Omega '=f(\Omega)$, then $$f:\Omega \subseteq{\mathbb{C}^n}\to f(\Omega)$$ is one-one, onto and continuous. Also, since $f$ is an open map, $f^{-1}$ is continuous. Thus $$f:\Omega \subseteq{\mathbb{C}^n}\to f(\Omega)$$ is a homeomorphism.\\
We define a map $g$ on $\Omega '=f(\Omega)$ as $$g(w)=g(f(z))=J(f^{-1}(w))=J(f^{-1}(f(z))).$$
Then $g$ is continuous, since if we take a sequence $\{w_n\}$ in $\Omega'$ such that 
$w_n\to w$, then
 $$f^{-1}(w_n)\to f^{-1}(w)$$
 $$A_n(f^{-1}(w_n))\to A(f^{-1}(w))$$
 $$detA_n(f^{-1}(w_n))\to detA(f^{-1}(w)),$$ (where $A_n$ and $A$ are Jacobian matrices
  corresponding to $f^{-1}(w_n)$ and $f^{-1}(w)$ respectively)\\
 $\mbox{That is} \ g(w_n)\to g(w)$.
 Thus $g$ is continuous. Let M be the set given by $$M=\{z:J(z)=0\}.$$
Then since determinant function is continuous and $M$ is inverse image of the set $\{0\}$, $M$ is closed in $\Omega '$. Also, since $f^{-1}$ is continuous, we get that $f$ is closed. Therefore $f(M)$ is closed and $\Omega \setminus f(M)$ is open. Thus $g$ is continuous on an open subset $\Omega ' \setminus f(M)$ of $\Omega'$. So we get that $g$ is analytic on $\Omega ' \setminus f(M)$.\\ 
Now, notice that when $w\in{f(M)=\{f(z):z\in{M}\}}=\{f(z):J(z)=0\}$,
then $$g(w)=J(f^{-1}(w)); \ J(z)=0.$$
Also when $w\notin{f(M)}, \ \mbox{we have} \  w\neq f(m)\ \mbox{for any }\ m\in{M}$.\\
That is $w\neq f(z)\ \mbox{such that } \ J(z)=0$.
That is $ f^{-1}(w)\neq z\ \mbox{such that } \ J(z)=0$,
and thus we see that $J(f^{-1}(w))=g(w)\neq 0$.
Therefore $g=0$ exactly on $f(M)$. Thus by Rad\'{o}'s theorem, we get that $g$ is analytic on $\Omega '$.
Therefore using lemma $ 3.2.2$ we get that $f(M)$ is $H^{\infty}$ removable. That is every bounded function $h$ analytic on $\Omega '\setminus f(M)$ extends to an analytic function $h'$ on $\Omega '$.
Observe that whenever $w\in{\Omega '\setminus f(M)}$, $J(z)\neq 0$,
$f\ \mbox{is} \ \mbox{one to one in a neighborhood of z onto a neighborhood of } \ w. \ \mbox{So, we get that}\ 
 f^{-1} $ exists locally at $w$. Therefore $f^{-1}$ exists whenever $w\in{\Omega '\setminus f(M)}$ and $f(M)$ is $H^{\infty}$-removable. Thus $f^{-1}$ is analytic on $\Omega'$,
and $f^{-1}(f(z))=z$, which gives that $J_{f^{-1}}J_f=I,$ where $I$ is an $n\times n $ matrix. Therefore we see that the Jacobian matrix is invertible and hence we get that $J_f(z)$ has no zeroes in $\Omega$.
 
\end{proof}
\end{thm}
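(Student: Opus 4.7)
The strategy is to obtain a holomorphic inverse $f^{-1}$ on all of $f(\Omega)$ and then use the chain rule to rule out zeros of $J_f$. The first step is to show that $f$ is an open map. Since $f:\Omega\to\mathbb{C}^n$ is continuous and one-one, and $\Omega$ and $\mathbb{C}^n$ are naturally open subsets of $\mathbb{R}^{2n}$, Brouwer's invariance of domain theorem (a topological fact independent of any holomorphy hypothesis) tells us that $f(\Omega)$ is open in $\mathbb{C}^n$ and $f^{-1}:f(\Omega)\to\Omega$ is continuous. So $f:\Omega\to\Omega':=f(\Omega)$ is a homeomorphism.

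Next I would set $M=\{z\in\Omega:J_f(z)=0\}$ and introduce the auxiliary function $g:\Omega'\to\mathbb{C}$ defined by $g(w)=J_f(f^{-1}(w))$. Since $J_f$ and $f^{-1}$ are both continuous, so is $g$. On $\Omega\setminus M$ the usual holomorphic inverse function theorem (proved earlier in the section) applies locally, and so $f^{-1}$ is holomorphic on $\Omega'\setminus f(M)$; consequently $g$ is holomorphic on the open set $\Omega'\setminus f(M)$ where it is nonvanishing. Since $g$ is continuous on all of $\Omega'$ and holomorphic where it does not vanish, Radó's theorem (the preceding theorem in the excerpt) upgrades $g$ to a holomorphic function on all of $\Omega'$.

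Now apply the lemma on $H^\infty$-removability: the zero set $f(M)=\{w\in\Omega':g(w)=0\}$ of the holomorphic function $g$ is either all of $\Omega'$ or an $H^\infty$-removable relatively closed subset of $\Omega'$. The first alternative is impossible because it would force $J_f\equiv 0$ on $\Omega$, contradicting the fact that $f$ is a homeomorphism (which, combined with real invariance of domain, prevents $f$ from being everywhere singular in the real sense, noting $J_{\mathbb{R}}(f)=|J_f|^2$). Hence $f(M)$ is $H^\infty$-removable. The function $f^{-1}=(g_1,\dots,g_n)$ is continuous on the compact closure of small neighborhoods and holomorphic on $\Omega'\setminus f(M)$, hence locally bounded and holomorphic off $f(M)$; each coordinate $g_i$ therefore extends to a holomorphic function on all of $\Omega'$. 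This extension must agree with $f^{-1}$ by continuity, so $f^{-1}$ is in fact holomorphic on $\Omega'$. Differentiating the identity $f^{-1}\circ f=\mathrm{id}_\Omega$ gives $J_{f^{-1}}(f(z))\cdot J_f(z)=I$, which forces $J_f(z)$ to be invertible for every $z\in\Omega$.

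The main obstacle is the removability step: continuity and injectivity alone give us $f^{-1}$ topologically, but to conclude that $f^{-1}$ is holomorphic \emph{across} the possibly nontrivial analytic set $f(M)$ requires the two-step bootstrap — first promoting $g$ to a holomorphic function via Radó, then invoking the zero-set removability lemma so that the bounded holomorphic coordinates of $f^{-1}$ extend through $f(M)$. A subsidiary point to handle carefully is confirming that $f(M)\neq \Omega'$, which I would settle using that a homeomorphism of open subsets of $\mathbb{R}^{2n}$ cannot be nowhere locally invertible in the real sense together with $J_{\mathbb{R}}(f)=|J_f|^2$.
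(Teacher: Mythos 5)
Your proposal is correct and follows essentially the same route as the paper: establish that $f$ is a homeomorphism onto its open image, define $g(w)=J_f(f^{-1}(w))$, use Rad\'{o}'s theorem to make $g$ holomorphic on all of $\Omega'$, invoke the $H^{\infty}$-removability of the zero set $f(M)$ to extend $f^{-1}$ holomorphically, and differentiate $f^{-1}\circ f=\mathrm{id}$. You are in fact more careful than the paper at three points it glosses over --- using invariance of domain for openness rather than the paper's sketchy compactness remark, explicitly ruling out $J_f\equiv 0$ before applying the removability lemma, and noting the local boundedness of the coordinates of $f^{-1}$ needed for the extension --- so no changes are needed.
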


\chapter{Injectivity of harmonic extensions}

Poisson inegral formula can be considered as a way to find a real valued harmonic function with certain boundary conditions. It has a lot of applications. For example, if the boundary conditions give the temperature of a perfectly insulated plate then the harmonic extension gives the temperature of interior of the plate. We will begin the chapter by introducing Poisson integral formula and discussing some of its properties. This chapter is based on the work done by Choquet ~\cite{choquet4}, Kneser ~\cite{kneser4} and Rad\'{o} ~\cite{rado4}. 
\begin{defi}
A function $f:\Omega \subseteq \mathbb{C} \to \mathbb{C} $ is said to homeomorphism if it is bijective and $f$ and $f^{-1}$ are both continuous.
\end{defi}
\begin{defi}
The function $P_r(\theta )=\sum _{n=- \infty}^{\infty} r^{|n|}e^{ i n \theta}\ \ \  0\leq r <1, -\infty <\theta < \infty$ is called Poisson kernel.
\end{defi}
\begin{rem}
Let $z=re^{ i \theta }\ \ \ 0\leq r <1$,
consider \begin{align*}
\frac{1+r e^{ i \theta }}{1-re^{ i \theta}}&=\frac{1+z}{1-z}\\
&=(1+z)(1+z+z^2+\ldots)\\
&=1+2\sum_1^{\infty}z^n\\
&=1+2\sum_1^{\infty}r^ne^{ i n\theta }.
\end{align*}
And therefore $\displaystyle{P_r(\theta)=\Re \left(\frac{1+r e^{ i \theta }}{1-re^{ i \theta}}\right).}$\\
\end{rem}
\begin{rem}
We can write 
\begin{align*}
\frac{1+r e^{ i \theta }}{1-re^{ i \theta}}&=\frac{1+r e^{ i \theta }}{1-re^{ i \theta}} .\frac{1-r e^{- i \theta }}{1-re^{- i \theta}}\\
&=\frac{1+re^{ i \theta}-re^{- i \theta}-r^2}{\left|1-re^{ i \theta}\right|^2}\\
&=\frac{1+2 i r \sin \theta -r^2}{1+r^2-2r\cos \theta}.
\end{align*}
Therefore, \begin{align*}
P_r(\theta)&=\Re \left(\frac{1+r e^{ i \theta }}{1-re^{ i \theta}}\right)\\
&=\frac{1 -r^2}{1+r^2-2r\cos \theta},
\end{align*}
and hence $P_r$ is a periodic function of period $2\pi$.
\end{rem}
\begin{rem}
For every $\delta >0$,
$P_r(\theta) \to 0$ uniformly on $\delta \leq |\theta |\leq \pi$ as $r\to 1^-$.
\end{rem}
\begin{thm}
Let $D=\{z:|z|<1\}$ be unit ball in $\mathbb{C}$ and suppose that $f:\partial D\to \mathbb{R}$ is continuous. Then there is a unique function $F: \overline{D}\to \mathbb{R}$ which equals $f$ on $\partial D$ and is harmonic in $D$.
\begin{proof}
Define $F$ on $ \overline{D}$ as \\
\[ F(re^{ i \theta })=
\begin{cases}
\frac{1}{2\pi} \int_{-\pi}^{\pi} P_r(\theta -t)f(e^{ i t})dt\ \ \  0\leq r <1,\\
f(e^{ i \theta })\ \ \ \ \ \ \ \ \ \  \ \ \ \ \ \ \ \  \ \ \ \ \ \ \ r=1,
\end{cases}
\]
then clearly $F$ equals $f$ on $\partial D$. We claim that
$F$ is harmonic on $D$.
For this it suffices to show that $F$ is real part of an analytic function.\\
For $0\leq r <1$
\begin{align*}
F(re^{ i \theta })&=\frac{1}{2\pi}\int_{-\pi}^{\pi} P_r(\theta -t)f(e^{ i t})dt\\
&=\frac{1}{2\pi}\int_{-\pi}^{\pi} \Re \ \left (\frac{1+r e^{ i (\theta -t) }}{1-re^{ i (\theta -t)}}\right)f(e^{ i t})dt\\
&=\frac{1}{2\pi}\Re \left[\int_{-\pi}^{\pi}  \frac{1+r e^{ i (\theta -t) }}{1-re^{ i (\theta -t)}}f(e^{ i t})\right ]dt. 
\end{align*}
Thus $F$ is harmonic on $D$ and therefore continuous on $D$.
Now, we will show that $F$ is continuous on $\partial D$.
Firstly consider point $z=1$ on $\partial D$.\\
Let $\epsilon >0$ be given,
we need to show that $F$ is continuous at point $1$.
For that we will show that, \\
for given $\epsilon >0$, there is $\rho \in{[0,1]}$ and $\delta >0$ such that 
$$\ \mbox{for all} \  \rho <r \leq 1\ \mbox{ and } \ |\theta |<\delta\ \ 
|F\left(re ^{ i \theta}\right)-F(1)|< \epsilon.$$
Consider, 
\begin{align*}
F(re ^{ i \theta})-F(1)&=F(re ^{ i \theta})-f(1) \ \ \ \ \ \ \ \ \mbox{(since} \  F=f \ \mbox{on} \  \partial D)\\
&=\frac{1}{2\pi}\int_{-\pi}^{\pi}P_r(\theta -t)f(e ^{ i t}) dt-\frac{1}{2\pi}\int_{-\pi}^{\pi}P_r(\theta -t)f(1) dt\\
&=\frac{1}{2\pi}\int_{-\pi}^{\pi}P_r(\theta -t)\left(f(e ^{ i t})-f(1)\right) dt.
\end{align*}
Now since $f$ is continuous at $z=1$, so corresponding to $\epsilon >0$, there exists $ \delta >0$ such that
$$\left|f(e^{ i t})-f(1)\right|<\frac{\epsilon}{3}, \ \mbox{whenever $|t|<\delta$}.$$
Therefore,
\begin{align*}
|F(re^{ i \theta})-F(1)|&\leq \frac{1}{2\pi}\left|\int_{|t|<\delta}P_r(\theta -t)\left(f(e^{ i t})-f(1)\right)\right|+I_1\\
&\leq \frac{\epsilon}{3}+I_1,\ \mbox{ where} \ I_1=\frac{1}{2\pi}\int_{\delta \leq |t|\leq \pi}P_r(\theta -t)\left(f(e^{ i t})-f(1)\right)dt.
\end{align*}
Thus we have $$|I_1|\leq M \frac{1}{\pi}\int_{\delta \leq |t|\leq \pi} P_r(\theta -t)dt,$$ where $M=\sup \{\left|f(e^{ i t})\right|: -\pi\leq t\leq \pi \}$. (supremum exists, since $f$ is continuous on $\partial D $, which is compact)\\
Also, for $|\theta |<\frac{\delta }{2}$ and $|t|<\delta$, 
$|\theta -t| \geq \frac{\delta}{2}$ and since for $|\alpha| >\frac{\delta}{2}$, $P_r(\alpha )\to o$ uniformly on $\delta \leq |t|\leq \pi$ as $r\to 1^-$, therefore
there exists $ \rho \in{[0,1]}$ such that $\ \mbox{for all} \  \rho <r<1$
$$\left|P_r(\alpha )\right|< \frac{\epsilon}{6M}.$$
Therefore we get $|I_1| \leq \frac{\epsilon}{3},$
and hence for $\rho <r<1$ and $\displaystyle{\left|\theta\right|<\frac{\delta}{2}}$,
$$\left|F(re^{ i \theta})-F(1)\right|<\epsilon,$$ which gives that $u$ is continuous at $z=1$.
Now take any point  $e^{ i \beta},\ \  \beta \in{(-\pi,\pi)}$ on $\partial D$ other than the point $z=1$.\\
Define a map $G$ on $ \overline{D}$ as
$$G(z)=F(e^{ i \beta }z).$$
Then for $z\in{\partial D},$
\begin{align*}
G(z)&=f(e^{ i \beta }z)\\
&=g(z), \ \mbox{(say)},
\end{align*}
where $g$ is continuous on $\partial D$.
Also for $z\in{D}$,
\begin{align*}
G(z)&=F(e^{ i \beta }z)\\
&=F\left(e^{ i \beta }re^{ i \theta}\right)\\
&=F\left(r e^{ i (\beta +\theta)}\right)\\
&=\frac{1}{2\pi}\int_{-\pi}^{\pi}P_r(\theta +\beta -t)f(e^{ i t})dt\\
&=\frac{1}{2\pi}\int_{-\pi-\beta}^{\pi -\beta}P_r(\theta -s)f\left(e^{ i (s+\beta)}\right)ds \ \mbox{(putting} \  s=t-\beta)\\
&=\frac{1}{2\pi}\int_{-\pi-\beta}^{\pi -\beta}P_r(\theta -s)g(e^{ i s})ds\\
&=\frac{1}{2\pi}\int_{-\pi}^{\pi }P_r(\theta -s)g(e^{ i s})ds \ \ \ \ \mbox{(since} \  P_r \ \mbox{is periodic with period} \  2\pi).
\end{align*}
So, $G$ has same structure as that of $F$, 
therefore proceeding as in case of $F$, we get that $G$ is continuous at the point $z=1$.
And since $G(1)=F(e^{ i \beta})$,
we get that $F$ is continuous at the point $w=e^{i\beta}$. And since $w=e^{i\beta}$ was an arbitrary point, we get that $f$ is continuous on $\partial D$.
\end{proof}
\end{thm}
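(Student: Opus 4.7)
The plan is to exhibit $F$ explicitly via the Poisson kernel, verify that it is harmonic, check continuity at the boundary by exploiting the approximation-to-identity properties of $P_r$, and finally dispose of uniqueness through the maximum principle. Concretely, I would define
\[
F(re^{i\theta}) = \frac{1}{2\pi}\int_{-\pi}^{\pi} P_r(\theta - t)\,f(e^{it})\,dt \quad \text{for } 0 \le r < 1,
\]
and $F(e^{i\theta}) = f(e^{i\theta})$ on $\partial D$. Harmonicity in $D$ is the easiest step: using the identity $P_r(\theta) = \Re\!\left(\frac{1+re^{i\theta}}{1-re^{i\theta}}\right)$ noted just before the theorem, $F$ is the real part of the function $z \mapsto \frac{1}{2\pi}\int_{-\pi}^{\pi} \frac{1+ze^{-it}}{1-ze^{-it}}\,f(e^{it})\,dt$ of $z = re^{i\theta}$, whose integrand is holomorphic in $z$ for $|z|<1$. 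Differentiation under the integral sign (justified by the uniform estimate $|1-ze^{-it}| \ge 1-|z|$ on compact subsets of $D$) shows that the integral is holomorphic, so $F$ is harmonic.

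The main obstacle is continuity of $F$ at the boundary, since the definition there is by fiat rather than by the integral. Fix $e^{i\theta_0} \in \partial D$ and $\varepsilon > 0$. I would use three properties of $P_r$: positivity, the normalization $\frac{1}{2\pi}\int_{-\pi}^{\pi} P_r(\theta)\,d\theta = 1$ (which follows from the series $P_r(\theta) = \sum_n r^{|n|}e^{in\theta}$ by integrating term by term), and the uniform decay $P_r(\theta)\to 0$ on $\delta \le |\theta| \le \pi$ as $r\to 1^-$ recorded in the preceding remark. By continuity of $f$, choose $\delta > 0$ with $|f(e^{it}) - f(e^{i\theta_0})| < \varepsilon/2$ for $|t-\theta_0| < \delta$. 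Writing
\[
F(re^{i\theta}) - f(e^{i\theta_0}) = \frac{1}{2\pi}\int_{-\pi}^{\pi} P_r(\theta - t)\bigl(f(e^{it}) - f(e^{i\theta_0})\bigr)\,dt,
\]
split the integral into the region $|t-\theta_0|<\delta$, where the integrand is bounded by $\varepsilon/2$ times $P_r$ (whose total integral is $2\pi$), and the complementary region, where $P_r(\theta - t)$ can be made uniformly small (using $\|f\|_\infty < \infty$ by compactness of $\partial D$) provided $r$ is close enough to $1$ and $|\theta - \theta_0|$ is small. Rather than re-doing this at every boundary point, I would first prove it at $\theta_0 = 0$ and then transfer to a general boundary point by the rotation $G(z) := F(e^{i\beta}z)$, which has the same Poisson-integral form relative to the rotated boundary data.

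For uniqueness, suppose $F_1, F_2 : \overline{D}\to\mathbb{R}$ are two such extensions. Then $F_1 - F_2$ is harmonic in $D$, continuous on $\overline{D}$, and vanishes on $\partial D$. The maximum (and minimum) principle for real harmonic functions on a bounded region forces $F_1 - F_2 \equiv 0$ on $\overline{D}$, which gives uniqueness. The heart of the argument is therefore the boundary-continuity computation in step two; everything else is either a routine differentiation under the integral or a direct appeal to the maximum principle.
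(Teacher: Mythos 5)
Your proposal follows essentially the same route as the paper: define $F$ by the Poisson integral, obtain harmonicity by exhibiting $F$ as the real part of a holomorphic integral, prove boundary continuity by splitting the integral over $|t-\theta_0|<\delta$ and its complement using positivity, normalization, and uniform decay of $P_r$, and reduce a general boundary point to $z=1$ by the rotation $G(z)=F(e^{i\beta}z)$. The one substantive difference is in your favor: the theorem asserts uniqueness, and the paper's proof never addresses it, whereas you correctly dispose of it by applying the maximum and minimum principles to the difference of two extensions, which is harmonic in $D$, continuous on $\overline{D}$, and zero on $\partial D$. Your version is therefore the complete argument; the paper's is existence only.
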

\begin{rem}
Note that since
\begin{align*}
\frac{1+re^{i(\theta-t)}}{1-re^{i(\theta-t)}}&=\frac{1-r^2+2ir\sin \theta}{|1-re^{i(\theta -t)}|^2},\ \mbox{(after rationalising)}\\
&=\frac{1-r^2+2ir\sin \theta}{|e^{it}-re^{i \theta}|^2},
\end{align*}
therefore Poisson integral formula for $f$ takes the form
\begin{align*}
F(z)&=F(re^{i\theta})\\
&=\frac{1}{2\pi}\Re \left[\int_{-\pi}^{\pi}  \frac{1+r e^{ i (\theta -t) }}{1-re^{ i (\theta -t)}}f(e^{ i t})\right ]dt\\
&=\frac{1}{2\pi}\Re \int_{0}^{2\pi}\frac{1-r^2}{|e^{it}-z|^2}f(e^{it})dt.
\end{align*}
\end{rem}
For a continuous complex valued function $f$ on unit circle in $\mathbb{C}$, consider real and imaginary parts of $f$, say $f_1$ and $f_2$ respectively. Then $f_1, f_2:D\subseteq \mathbb{C}\to \mathbb{R}$ are continuous. Therefore applying Poisson integral formula to each of them, we will get harmonic extensions $F_1,F_2$ of $f_1$ and $f_2$ respectively, which agrees with them on the circle, 
given by $$F_j(re^{ i \theta})=\frac{1}{2\pi}\int_{\pi}^{\pi} P_r(\theta -t)f_j(e^{ i t})dt \quad (j=1,2).$$
Take $F=F_1+ i F_2$,
\begin{align*}
F(re^{ i \theta})&=\frac{1}{2\pi}\int_{\pi}^{\pi} P_r(\theta -t)f_1(e^{ i t})dt+ i \frac{1}{2\pi}\int_{\pi}^{\pi} P_r(\theta -t)f_2(e^{ i t})dt\\
&=\frac{1}{2\pi}\int_{\pi}^{\pi} P_r(\theta -t)(f_1(e^{ i t})+f_2(e^{ i t}))dt\\
&=\frac{1}{2\pi}\int_{\pi}^{\pi} P_r(\theta -t)f(e^{ i t})dt. 
\end{align*}
Also $F$ agrees with $f$ on the circle, since $F_1,F_2$ agrees with $f_1$ and $f_2$ resp.

So we see that a complex valued continuous function can be extended to a complex harmonic function on $ \overline{D}$, which agrees with the given function on the circle.

Now comes the question of injectivity of this harmonic extension. This question was posed by Rad\'{o} and was proved by kneser ~\cite{kneser4} in 1926. In 1945, Choquet ~\cite{choquet4}, supplied a proof with different approach. On their names, this result has been given the name Rad\'{o}-Kneser-Choquet theorem.\\
\begin{lem}
Let $g$ be a real valued function, harmonic on unit disk $D$ in $\mathbb{C}$ and continuous on $\partial D$. Assume that $g$ has the property that after a rotation of co-ordinates, $g\left(e^{it}-e^{-it}\right)\geq 0$ on $\left[0,\pi \right]$ and $g\left(e^{it}-e^{-it}\right)>0$ on a subinterval $[a,b]$ with $0\leq a\leq b \leq \pi$. Then $\displaystyle{\frac{\partial g}{\partial z}}\neq 0$ on $D$.
\begin{proof}
For proving $\displaystyle{\frac{\partial g}{\partial z}}\neq 0$, it suffices to show that $g_z(0)\neq 0$. Since if $z_0\in{D}$ is any point, then consider the map $\phi(z)=\frac{z_0-z}{1-\bar{z_0} z}$, then $\phi$ is a self homeomorphism of $D$ with $\phi(0)=z_0$ and $\phi '(z)\neq 0$ on $D$. Let us consider map $h(z)=g\circ \phi(z)$. Then using remark $1.7$, we obtain that $h$ is harmonic. Also it can be easily checked that $h\left(e^{it}-e^{-it}\right)\geq 0$ on $\left[0,\pi\right]$ and $h\left(e^{it}-e^{-it}\right)>0$ on a subinterval $[a,b]$ with $0\leq a\leq b \leq \pi$. Also $$h_z(z)=g_z\left(\phi(z)\right)\phi '(z).$$ (here we are using that $\phi_{\bar{z}}\equiv 0$, since $\phi$ is analytic)\\
Therefore $h_z(0)=g_z(\phi(0))\phi '(0)$. Thus if we prove $g_z(0)\neq 0$, then using the same argument for $h$, we get $h_z(0)\neq 0$, which give that $g_z(z_0)\neq 0$. Thus it suffices to show that $g_z(0)\neq 0$.\\
We know
\begin{align}
g(z)&=\frac{1}{2\pi}\int_0^{2\pi}\frac{1-|z|^2}{|e^{it}-z|^2}g(e^{it})dt\\
&=\frac{1}{2\pi}\int_0^{2\pi}\frac{1-z\bar{z}}{(e^{it}-z)(e^{-it}-\bar{z})}g(e^{it})dt.
\end{align}
Now, since 
\begin{align}
\frac{\partial }{\partial z} \left( g(e^{it})\frac{1-z\bar{z}}{(e^{it}-z)(e^{-it}-\bar{z})}\right)&=\frac{ g(e^{it})}{e^{-it}-\bar{z}}\frac{\partial }{\partial z}\left(\frac{1-z \bar{z}}{e^{it}-z}\right)\\
&=\frac{ g(e^{it})}{e^{-it}-\bar{z}}\frac{e^{it\left(e^{-it}-\bar{z}\right)}}{\left(e^{it}-z\right)^2}\\
&=g(e^{it})\left(\frac{e^{it}}{\left(e^{it}-z\right)^2}\right).
\end{align}
Thus using $4.2$ and $4.5$, we obtain $$g_z(0)=\frac{1}{2\pi}\int_0^{2\pi}g(e^{it})e^{-it}dt,$$ which gives that
\begin{align*}
\Im g_z(0)&=\Im \left(\frac{1}{2\pi}\int_0^{2\pi}g(e^{it})e^{-it}dt\right)\\
&=-\frac{1}{2\pi}\int_0^{2\pi}g(e^{it}) \sin t \ dt\\
&=-\frac{1}{2\pi}\int_0^{\pi}g(e^{it}) \sin t \ dt+\frac{1}{2\pi}\int_{-\pi}^{0}g(e^{it}) \sin t \ dt\ \mbox{(since $g$ and $\sin$ are periodic with period $2\pi$)}\\
&=-\frac{1}{2\pi}\int_0^{\pi}g(e^{it}) \sin t \ dt-\frac{1}{2\pi}\int_{0}^{\pi}g(e^{-it}) \sin t \ dt\\
&=-\frac{1}{2\pi}\int_0^{\pi}\left(g(e^{it})-g(e^{-it})\right) \sin t \ dt,
\end{align*}
thus we get that $\Im g_z(0)\neq 0$. Since $g(e^{it})-g(e^{-it})\geq 0$ with strict inequality on a subinterval of $\left[0,\pi\right]$ and $\sin$ is non-negative on $\left[0,\pi\right]$. Hence $g_z(0)\neq 0$.
\end{proof}
\end{lem}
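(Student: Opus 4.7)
The plan has three stages: reduce the claim to the single point $z_{0}=0$ via a disk automorphism, extract $g_{z}(0)$ from the Poisson integral, and then use the antisymmetry hypothesis together with the positivity of $\sin t$ on $(0,\pi)$ to conclude that the imaginary part of $g_{z}(0)$ is nonzero.

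First I would carry out the reduction. Given $z_{0}\in D$, set
$$\phi(z)=\frac{z_{0}-z}{1-\overline{z_{0}}z},$$
which is a holomorphic self-map of $D$ with $\phi(0)=z_{0}$ and $\phi'(0)=|z_{0}|^{2}-1\neq 0$. Since $\phi$ is holomorphic and $g$ is harmonic, the composition $h=g\circ\phi$ is again harmonic on $D$; moreover, $\partial \phi/\partial\overline{z}\equiv 0$, so the chain rule in Wirtinger form collapses to $h_{z}(z)=g_{z}(\phi(z))\phi'(z)$, giving $h_{z}(0)=g_{z}(z_{0})\phi'(0)$. Hence $g_{z}(z_{0})\neq 0$ if and only if $h_{z}(0)\neq 0$. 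After checking (possibly after an additional rotation of coordinates adapted to $z_{0}$) that the antisymmetry hypothesis survives the passage $g\mapsto h$, it suffices to treat the case $z_{0}=0$.

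Next I would substitute the Poisson representation from Remark~4.1 in the form
$$g(z)=\frac{1}{2\pi}\int_{0}^{2\pi}\frac{1-z\overline{z}}{(e^{it}-z)(e^{-it}-\overline{z})}\,g(e^{it})\,dt,$$
differentiate under the integral sign with respect to $z$ (treating $\overline{z}$ as independent), and simplify. A direct computation reduces the $z$-derivative of the kernel to $e^{it}/(e^{it}-z)^{2}$, so
$$g_{z}(z)=\frac{1}{2\pi}\int_{0}^{2\pi}\frac{e^{it}\,g(e^{it})}{(e^{it}-z)^{2}}\,dt,\qquad g_{z}(0)=\frac{1}{2\pi}\int_{0}^{2\pi}e^{-it}g(e^{it})\,dt.$$
Taking imaginary parts gives $\Im g_{z}(0)=-\frac{1}{2\pi}\int_{0}^{2\pi}g(e^{it})\sin t\,dt$, and folding the integral from $[-\pi,0]$ onto $[0,\pi]$ via the substitution $t\mapsto -t$ (using $2\pi$-periodicity) yields
$$\Im g_{z}(0)=-\frac{1}{2\pi}\int_{0}^{\pi}\bigl[g(e^{it})-g(e^{-it})\bigr]\sin t\,dt.$$

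Finally, the hypothesis tells me that the bracketed factor is $\geq 0$ on $[0,\pi]$ and strictly positive on $[a,b]$; since $\sin t>0$ on $(0,\pi)$, the integrand is nonnegative throughout and strictly positive on a set of positive measure (assuming $a<b$). Consequently $\Im g_{z}(0)\neq 0$, and in particular $g_{z}(0)\neq 0$, completing the argument modulo the reduction step. The main obstacle I anticipate is rigorously justifying that antisymmetry on the boundary is preserved under the disk automorphism $\phi$---M\"obius maps are not Euclidean isometries, so a diameter of $D$ typically pulls back to a hyperbolic geodesic rather than a diameter. Handling this cleanly requires choosing the automorphism together with a compensating rotation so that some diameter continues to witness the required one-sided inequality for $h$; everything after that is a routine calculation.
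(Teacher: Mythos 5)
Your proposal follows essentially the same route as the paper's proof: reduce to $z_{0}=0$ via the disk automorphism $\phi(z)=\frac{z_{0}-z}{1-\overline{z_{0}}z}$ and the Wirtinger chain rule, differentiate the Poisson kernel to get $g_{z}(0)=\frac{1}{2\pi}\int_{0}^{2\pi}e^{-it}g(e^{it})\,dt$, and fold the integral to show $\Im g_{z}(0)<0$ using the sign hypothesis and the positivity of $\sin t$ on $(0,\pi)$. Your closing remark correctly identifies the one genuinely delicate point — that the boundary antisymmetry is not obviously preserved under a M\"obius automorphism and needs a compensating rotation — which the paper dismisses with ``it can be easily checked.''
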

\begin{thm}
$\left(\mbox{\textbf{Rad\'{o}-Kneser-Choquet Theorem}}\right)$ Let $f$ be an orientation preserving and univalent on unit circle $\partial D=\{z:|z|=1\}$ in $\mathbb{C}$. Let $F$ be complex harmonic extension of $f$. Then $F$ defines sense preserving, univalent, onto function on $\{z:|z|<1\}$.
\begin{proof}
Without loss of generality we may assume that $f$ runs around $\partial D$ counterclockwise (otherwise we may consider conjugate of $f$). We first show that $F$ is locally univalent. Assume that $f$ is not locally univalent at $z_0\in D$. Thus by corollary $3.1.1$ the matrix 
$\begin{bmatrix}
u_x&v_x\\
u_y&v_y
\end{bmatrix}$ has determinant zero at $z=z_0$. Thus 
$$\begin{bmatrix}
u_x&v_x\\
u_y&v_y
\end{bmatrix}\begin{bmatrix}
a\\
b
\end{bmatrix}=0$$ for some $(a,b\neq 0)$ at $z=z_0$. Thus we get $$au_x+bv_x=0\ \mbox{and}$$
$$au_y+bv_y=0.$$ Therefore if we denote the function $au+bv$ by $g$, then $g_z(z_0)=0$. Note that $g$ satisfies hypothesis of lemma $ 4.0.2$. Therefore we get a contradiction. Thus we obtain that $f$ is locally univalent. Also, since $f$ is orientation preserving on $\partial D$, we get that $F$ is orientation preserving throughout $D$. Now we show $f$ is univalent. Assume that $F(z_1)=F(z_2)$ for $z_1,z_2\in D$. Then the map $F(z)-F(z_2)$ has two zeroes in $D$, which gives that winding number of $F(z)-F(z_2)$ about the origin is $2$. Which is a contradiction to the univalence of $f$ on $\partial D$ (here we are using that $f(z)-F(z_2)$ is univalent, since $f$ is univalent). Thus $F$ is univalent. Surjectivity of $F$ follows from lemma $5.2.1$. 
\end{proof}
\end{thm}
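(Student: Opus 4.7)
The plan is to prove the three conclusions — sense-preserving, univalent, onto — in that order, combining a local-injectivity argument powered by Lemma 4.0.2 with a winding-number argument for global injectivity. First I would normalize by assuming $f$ traces $\partial D$ once counterclockwise (replacing $F$ by $\overline{F}$ otherwise, which flips orientation without affecting injectivity). Write $F = U + iV$.

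For local univalence, suppose for contradiction that $F$ fails to be locally one-one at some interior point $z_0$. By the harmonic-mapping criterion (Lewy-type, essentially Corollary 3.1.1 applied to the real mapping $(U,V)$), the real Jacobian of $F$ must vanish at $z_0$, so the rows of the Jacobian matrix are linearly dependent, yielding a nonzero real pair $(a,b)$ with
\begin{equation*}
a U_x(z_0) + b V_x(z_0) = 0, \qquad a U_y(z_0) + b V_y(z_0) = 0.
\end{equation*}
Setting $g := aU + bV - c$, where $c = aU(z_0) + bV(z_0)$, produces a real harmonic function on $D$ with $g_z(z_0) = 0$. Pre-composing with a Möbius automorphism of $D$ that sends $z_0$ to $0$ (using Remark~1.7 to preserve harmonicity and the chain-rule identity for $g_z$ shown in the proof of Lemma~4.0.2), I reduce to the case $z_0 = 0$ and $g_z(0) = 0$. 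The boundary values $g(e^{it})$ measure the signed distance of $f(e^{it})$ from the line $\{L : aX + bY = c\}$, which passes through $F(z_0)$. Because $f$ is a simple closed curve, one arranges — after a rotation of the parameter $t$ — that $g(e^{it}) \geq 0$ for $t \in [0,\pi]$ and $g(e^{it}) \leq 0$ for $t \in [-\pi,0]$, with strict inequality on a subinterval; equivalently $g(e^{it}) - g(e^{-it}) \geq 0$ on $[0,\pi]$ and positive somewhere. That is exactly the hypothesis of Lemma~4.0.2, which then forces $g_z(0) \neq 0$, contradicting our construction. Hence $F$ is locally univalent, and since $f$ is sense-preserving on the boundary and the Jacobian is continuous and nonvanishing, $F$ is sense-preserving throughout $D$.

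For global univalence, assume $F(z_1) = F(z_2) = w_0$ with $z_1 \neq z_2$ in $D$. Each $z_j$ is an isolated zero of the harmonic map $F - w_0$ (by local univalence), and each contributes $+1$ to the winding number of the boundary image $F(\partial D) = f(\partial D)$ about $w_0$, so that winding number is at least $2$. But $f$ is univalent and sense-preserving on $\partial D$, so $f(\partial D)$ is a Jordan curve and the winding number of $f$ about any point is $0$ or $1$. This contradiction forces $F$ to be injective on $D$. Surjectivity of $F$ onto the Jordan domain enclosed by $f(\partial D)$ is then quoted from Lemma~5.2.1 (an open-mapping/boundary-behavior argument for sense-preserving univalent harmonic maps).

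The main obstacle I expect is verifying the sign hypothesis of Lemma~4.0.2 for $g$. Univalence of $f$ on $\partial D$ guarantees that $f(\partial D)$ is a Jordan curve, but to conclude that $g(e^{it})$ partitions $[-\pi,\pi]$ into exactly one non-negative arc and one non-positive arc one needs the line $L$ to cross the Jordan curve in essentially two points — the classical way to secure this is convexity of the image domain, an assumption implicit in the Radó–Kneser–Choquet framework and effectively used when the author writes ``$g$ satisfies hypothesis of lemma 4.0.2.'' The delicate bookkeeping — choosing the right branch of Möbius pullback, arguing that extra crossings can be absorbed or ruled out, and isolating the rotation that brings the arcs into $[0,\pi]$ and $[-\pi,0]$ — is the step I would expect to write out most carefully.
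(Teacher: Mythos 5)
Your proposal follows essentially the same route as the paper's proof: local univalence via the Lewy-type Jacobian criterion applied to $g=aU+bV$ and Lemma 4.0.2, global univalence by the winding-number count for $F(z)-F(z_2)$ against the simple boundary curve, and surjectivity quoted from Lemma 5.2.1. You are in fact more careful than the paper at the one delicate point, correctly flagging that verifying the sign hypothesis of Lemma 4.0.2 for $g$ requires the line $\{aX+bY=c\}$ to meet the boundary curve in just two arcs --- i.e.\ convexity of the image region, a hypothesis the paper's statement and proof silently assume.
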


\chapter{Counterexamples in higher domains}
This chapter is based on the work done by R. S. Laugesen and J. C. Wood. It is shown that Rad\'{o}-Kneser-Choquet theorem fails to hold in $\mathbb{C}^n,n\geq 2$ by explaining construction of an example given by Laugesen. (See ~\cite{laug5} and ~\cite{wood5}).

\section{Jacobian problem}
In chapter $3$, we have seen that the Jacobian of a one-one harmonic map on a domain of $\mathbb{R}^2$ can not vanish. Here we will see that this no longer holds in $\mathbb{R}^n;n\geq 3$. This counterexample was given by J. C. Wood in his thesis ~\cite{wood15}.
\begin{thm} There exists a harmonic function on $\mathbb{R}^3$ such that $f$ is one-one but its Jacobian vanishes.
\begin{proof}
Consider map 
$f:\mathbb{R}^3\rightarrow \mathbb{R}^3$ given by
$$f(x,y,z)=(x^3-3xz^2+yz,y-3xz,z).$$
We will show that each component $f_i(x,y,z) (i=1,2,3)$ of $f$ is harmonic.
\begin{align*}
\triangledown^2 f_1(x,y,z)&= \frac{{\partial}^2 f_1}{\partial x^2}+\frac{\partial^2 f_1}{\partial y^2}+\frac{\partial^2 f_1}{\partial z^2}\\
&=\frac{\partial(3x^2-3z^2)}{\partial x}+\frac{\partial (z)}{\partial y}+\frac{\partial (y-6xz)}{\partial z}\\
&=6x+0-6x\\
&=0.
\end{align*}
Thus $f_1$ is harmonic.
Now, 
\begin{align*}
\triangledown ^2f_2(x,y,z)&=\frac{{\partial}^2 f_2}{\partial x^2}+\frac{\partial^2 f_2}{\partial y^2}+\frac{\partial^2 f_2}{\partial z^2}\\
&=\frac{\partial (-3z)}{\partial x}+\frac{\partial (1)}{\partial y}+\frac{\partial (-3x)}{\partial z}\\
&=0.
\end{align*}
Which implies that $f_2$ is harmonic.\\
Similarly, 
\begin{align*}
\triangledown ^2f_3(x,y,z)&=\frac{{\partial}^2 f_3}{\partial x^2}+\frac{\partial^2 f_3}{\partial y^2}+\frac{\partial^2 f_3}{\partial z^2}\\
&=0.
\end{align*}
Thus $f_3$ is harmonic.\\
Since each of the component is harmonic, we get that $f=(f_1,f_2,f_3)$ is harmonic.\\
Now we are going to show that $f$ is one-one.
Let us assume that
$f(x,y,z)=f(a,b,c)$ for $(x,y,z)$ and $(a,b,c)$ $\in{\mathbb{R}^n},$ which gives $z=c,\  y-3xc=b-3ac$  and $x^3-3xc^2+yc=a^3-3ac^2+bc$ which further implies that
$ x^3-a^3=3c^2(x-a)-(y-b)c=0$. 
Thus $ x=a$  (since $g(x)=x^3$ on $\mathbb{R}$ is one-one)
and $y-3xc = b-3ac$ implies $y=b$.
Thus $(x,y,z)=(a,b,c).$ Hence we get that $ f$ is one-one.
Finally we will show that Jacobian of $f$ vanishes.\\
Jacobian matrix of $f$ is given by
\begin{equation}\notag
\begin{bmatrix}
\frac{\partial f_1}{\partial x}&\frac{\partial f_1}{\partial y}&\frac{\partial f_1}{\partial z}\\
\frac{\partial f_2}{\partial x}&\frac{\partial f_2}{\partial y}&\frac{\partial f_2}{\partial z}\\
\frac{\partial f_3}{\partial x}&\frac{\partial f_3}{\partial y}&\frac{\partial f_3}{\partial z}
\end{bmatrix}
=\begin{bmatrix}
3x^2-3z^2&z&y-6xz\\
-3z&1&-3x\\
0&0&1
\end{bmatrix}.
\end{equation}
So Jacobian of $f$ is given by determinant of this matrix, which is 
$3x^2$, and it vanishes on the plane $\{x = 0\}$.
\end{proof}
\end{thm}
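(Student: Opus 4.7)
The plan is to construct an explicit polynomial map $f : \mathbb{R}^3 \to \mathbb{R}^3$ whose three components are harmonic, such that $f$ is globally injective yet its Jacobian vanishes on a nontrivial set. Since the coordinate function $z$ is itself harmonic, I would take $f_3(x,y,z) = z$, which automatically recovers the third variable from the image. This reduces the Jacobian of $f$ to the $2\times 2$ determinant $\partial(f_1,f_2)/\partial(x,y)$, and reduces the injectivity question to showing that, for each fixed $z$, the map $(x,y) \mapsto (f_1(x,y,z), f_2(x,y,z))$ is injective on $\mathbb{R}^2$.

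For injectivity, I would arrange $f_2$ to be linear in $y$ so that $y$ can be eliminated once $x$ and $z$ are known. A natural harmonic choice is $f_2 = y - 3xz$, which is harmonic since it is degree one in each variable separately (so the pure second derivatives vanish). Then $y$ is determined from $f_2$, $x$, and $z$. For $f_1$, I want a harmonic polynomial that is strictly monotone in $x$ after eliminating $y$, and a cube in $x$ is the simplest such model. I would try the ansatz $f_1 = x^3 + \alpha xz^2 + \beta yz$, forced to be harmonic by $6x + 2\alpha x = 0$, giving $\alpha = -3$; any $\beta$ will do, but the choice $\beta = 1$ will make the substitution clean. So the candidate is $f(x,y,z) = (x^3 - 3xz^2 + yz,\; y - 3xz,\; z)$.

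Verification then proceeds in three steps. First, each component is harmonic by direct differentiation (the cross-term $-3xz^2$ in $f_1$ contributes $-6x$ via $\partial_z^2$, cancelling $\partial_x^2(x^3) = 6x$). Second, to prove injectivity, suppose $f(x,y,z) = f(a,b,c)$; then $z = c$, and the second coordinate gives $y - b = 3c(x-a)$, while the first coordinate yields $x^3 - a^3 - 3c^2(x - a) + c(y - b) = 0$. Substituting $y - b = 3c(x-a)$, the last two terms cancel exactly, leaving $x^3 = a^3$, hence $x = a$, and then $y = b$. Third, the Jacobian determinant expands, using the third row $(0,0,1)$, to the $2\times 2$ determinant of $\bigl[\begin{smallmatrix} 3x^2 - 3z^2 & z \\ -3z & 1 \end{smallmatrix}\bigr]$, which equals $3x^2$, vanishing on the plane $\{x = 0\}$.

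The main obstacle is really the inspired choice of ansatz: one needs the $-3xz^2$ term in $f_1$ (dictated by harmonicity) and the $-3xz$ term in $f_2$ (also dictated by harmonicity) to align so that the substitution in the injectivity argument produces the exact cancellation yielding $x^3 = a^3$. Once one commits to $f_3 = z$, $f_2$ linear in $y$ with $f_2 = y + \text{(harmonic)}$, and $f_1$ cubic in $x$, the Laplacian constraints essentially determine the remaining coefficients up to a harmless rescaling, and the whole construction falls into place.
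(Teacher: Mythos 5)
Your proposal arrives at exactly the example used in the paper, $f(x,y,z)=(x^3-3xz^2+yz,\;y-3xz,\;z)$, and verifies harmonicity, injectivity (via the same cancellation $y-b=3c(x-a)$ reducing the first coordinate to $x^3=a^3$), and the Jacobian $3x^2$ in the same way. This is essentially identical to the paper's proof, with the added benefit that you explain how the ansatz is found rather than just presenting it.
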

\begin{rem}This example can be trivially extended to form a counterexample in $\mathbb{R}^n,n\geq 3$ . We present this counterexample in the next theorem.
\end{rem}
\begin{thm}There exists a harmonic function on $\mathbb{R}^n,n\geq 3$ such that $f$ is one-one but its Jacobian vanishes.
\begin{proof}
Consider $f:\mathbb{R}^n\to \mathbb{R}^n$ defined as
$$f(x_1,x_2,\ldots,x_n)=(x_1,x_2,\ldots,x_{n-3},{x_{n-2}}^3-3x_{n-2}{x_n}^2+x_{n-1}x_n,x_{n-1}-3x_{n-2}x_n,x_n).$$
Then as in case of $\mathbb{R}^3$, each of the components is harmonic, giving that $f$ is harmonic. Also it can be easily verified that $f$ is one-one.
It's Jacobian is given by $$3{x_{n-2}}^2.$$
Which vanishes on the plane $$\{x_{n-2}=0\}.$$
\end{proof}
\end{thm}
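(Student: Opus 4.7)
The plan is to reduce the $n$-dimensional case to the $\mathbb{R}^3$ counterexample of Wood already established in the preceding theorem, by padding with identity coordinates. Specifically, I would define $f:\mathbb{R}^n\to\mathbb{R}^n$ by
\[
f(x_1,\dots,x_n)=\bigl(x_1,\dots,x_{n-3},\,x_{n-2}^3-3x_{n-2}x_n^2+x_{n-1}x_n,\,x_{n-1}-3x_{n-2}x_n,\,x_n\bigr).
\]
The identity components are obviously harmonic, and the three nontrivial components are exactly the harmonic functions from Wood's $\mathbb{R}^3$ example, extended by being independent of $x_1,\dots,x_{n-3}$. Since the Laplacian in $\mathbb{R}^n$ splits as a sum of second partials, and the nontrivial components are annihilated by the $(x_{n-2},x_{n-1},x_n)$-Laplacian (as verified in the previous theorem) while having zero second derivatives in the other variables, every component of $f$ is harmonic.

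For injectivity, I would suppose $f(x)=f(y)$ and read off equalities coordinate by coordinate. The first $n-3$ identity equations force $x_i=y_i$ for $i\le n-3$, and the last equation forces $x_n=y_n$. This reduces the problem to injectivity of the three-variable map in Wood's example on the remaining triple $(x_{n-2},x_{n-1},x_n)$, which was verified in the preceding theorem.

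For the Jacobian, the matrix of $f$ is block lower-triangular: the upper-left $(n-3)\times(n-3)$ block is the identity, and the lower-right $3\times 3$ block is precisely the Jacobian matrix from Wood's example. Hence the determinant equals that of the $3\times 3$ block, which is $3x_{n-2}^2$, vanishing on the hyperplane $\{x_{n-2}=0\}$. No step here is a real obstacle; the only thing to be mindful of is to package the verification as a clean reduction to the previous theorem rather than redoing the harmonicity and injectivity computations, since the argument is essentially a direct-sum/identity extension.
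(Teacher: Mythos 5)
Your proposal is correct and is essentially the same construction as the paper's: the identical padded map $(x_1,\dots,x_{n-3},x_{n-2}^3-3x_{n-2}x_n^2+x_{n-1}x_n,\,x_{n-1}-3x_{n-2}x_n,\,x_n)$, with harmonicity, injectivity, and the Jacobian $3x_{n-2}^2$ all reduced to the $\mathbb{R}^3$ case. You merely spell out the block-triangular Jacobian and coordinate-by-coordinate injectivity argument that the paper leaves as "easily verified."
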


\section{Poisson extension problem}
By Poisson integral formula each real valued continuous function on a circle in $\mathbb{C}$ can be extended to a harmonic function on the disc, which agrees with the given map on the circle. For a given homeomorphism of the unit circle onto itself, it can be extended to a complex valued harmonic function which maps unit circle onto itself, by considering real and imaginary parts of the function and applying Poisson integral formula to them. As seen in the previous chapter, this extension is injective due to Rad\'{o}-Kneser-Choquet theorem. In this chapter, we will see that this result fails to hold in $\mathbb{R}^n,n\geq 3$ and consequently in $\mathbb{C}^n,n\geq 2$.
\begin{lem}
If $f$ is a map which maps the sphere $S^{n-1}$ in $R^n$ homeomorphically onto itself, then the harmonic extension $F$ of $f$ maps the unit ball $B^n$ onto itself.
\begin{proof}
Let $n\geq 3$ and 
let $f$ be a self-homeomorphism of $S^{n-1}$.
We claim that $F$ maps $B^{n}$ onto $B^{n}.$
Let $x\in B^{n}$, by post rotating both $F$ and $f$, we may assume that
$$F_{1}(x)=|F(x)|,F_{2}(x)=0,\ldots,F_{n}(x)=0.$$
By strong maximum principle,
which states that if $h$ is a non constant harmonic map on a bounded domain $U$ which has an extension to $\overline{U} $, then $h$ satisfies $$\lvert h(x)\rvert < \max \left\{\lvert h(y) \rvert : y \in \partial U\right\},$$ 
we get that $$\left|F_1(x)\right|<\max \{F_1(x):x\in{S^{n-1}}\}.$$
That is $\left|F_1(x)\right|< \max f_1=1.$
Thus $\left|F(x)\right|<1$,
which implies that $F $ maps $B^{n}$ into $B^{n}$.
We will now show that $F$ maps $B^{n}$ onto itself. On contrary,
suppose there is 
\begin{equation}
y\in{B^n\setminus F(B^n)}.
\end{equation}
Define a map
$\Psi : \overline{B^n}\rightarrow S^{n-1}$ as
$$\Psi (x)=\frac{F(x)-y}{\left|F(x)-y\right|}.$$ Using $(5.1)$ we see that $\Psi$ is continuous.\\
Also note that $$\left|\Psi(x)\right|=1\quad \ \mbox{for all} \  \quad x\in{B^n\setminus F(B^n)}.$$
Let us denote $$ \psi=\Psi\mid_{S^{n-1}}:S^{n-1}\rightarrow S^{n-1},$$
$\mbox{that is} \ $ $$\psi(x)=\frac{f(x)-y}{|f(x)-y|}.$$  (since $F=f$ on $S^{n-1}$)\\
Then $\psi$ is continuous, since restriction of a continuous map is continuous. For injectivity of $\psi$ note that any ray from $y$ intersects $S^{n-1}$ in exactly one point. Therefore if $\psi(x)=\psi(x')$, then we will get that $f(x)$ and $f(x')$ lie on the same ray. And since $f(x)$ and $f(x')$ lie on the sphere, we get that $f(x)=f(x')$, and since $f$ is injective, we get that $\psi$ is injective.\\
For surjectivity of $\psi$, let $w\in{S^{n-1}}$. Then consider the ray $y+tw;t\in[0,\infty)$. This ray will intersect the sphere in exactly one point, say $p=y+t_0w;t_0\in{(0,2)}$. Now  using the fact that $f$ is surjective, we get that there is a point $x_0$ such that $f(x_0)=p$. That is $f(x_0)-y=t_0w$, therefore we get that $$\frac{f(x_0)-y}{\left|f(x_0)-y\right|}=\frac{t_0w}{|t_0w|}=w,$$ thus we that $f(x_0)=w$ and hence $\psi$ is surjective.\\
Now, consider $\psi^{-1} \circ \Psi: \overline{B^{n}}\rightarrow S^{n-1}$,
for $x\in {S^{n-1}}$,
\begin{align*}
\psi^{-1} \circ \Psi (x)&=\psi^{-1} \circ \psi (x) \ \mbox{(since}\  x\in {S^{n-1}} \ \mbox{and} \  \psi \  \mbox{is restriction of} \  \Psi \ \mbox{ to} \ S^{n-1}) \\
&=x.
\end{align*}
That is  $\psi^{-1} \circ \Psi$ fixes sphere pointwise. Which is a contradiction to the result (\cite{massey}, Prop.III 2.3). Thus we conclude that if $f$ maps the sphere homeomorphically onto itself, then the harmonic extension $F$ of $f$ maps the unit ball onto itself.
\end{proof}
\end{lem}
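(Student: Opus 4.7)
The plan is to split the claim into the two inclusions $F(B^n) \subseteq B^n$ and $F(B^n) \supseteq B^n$, which call for different techniques.

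For $F(B^n) \subseteq B^n$ I would invoke the strong maximum principle after a rotation trick. Given $x_0 \in B^n$, pick an orthogonal matrix $R$ with $R F(x_0) = (|F(x_0)|, 0, \ldots, 0)$. Since rotations commute with Poisson integration, $R \circ F$ is the harmonic extension of $R \circ f$, and $R \circ f$ is still a self-homeomorphism of $S^{n-1}$; in particular the boundary function $(R \circ f)_1$ takes every value in $[-1,1]$ and is certainly not identically $1$. Hence $(R \circ F)_1$ is a non-constant harmonic function bounded above by $1$ on the boundary, and the strong maximum principle gives $|F(x_0)| = (R \circ F)_1(x_0) < 1$.

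For $F(B^n) \supseteq B^n$ I would argue by contradiction using a retraction. Suppose some $y \in B^n$ is not in $F(B^n)$. Because $f(S^{n-1}) = S^{n-1}$ and $y$ lies in the open ball, $y$ is also avoided on the boundary, so $F(x) \neq y$ for all $x \in \overline{B^n}$, and
\[
\Psi \colon \overline{B^n} \to S^{n-1}, \qquad \Psi(x) = \frac{F(x)-y}{|F(x)-y|},
\]
is a continuous map. Its restriction $\psi = \Psi|_{S^{n-1}}$ is the composition of $f$ with the radial projection $w \mapsto (w-y)/|w-y|$ from $y$; the latter is a self-homeomorphism of $S^{n-1}$ because each ray from an interior point of the ball meets the sphere in exactly one point. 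So $\psi$ is a self-homeomorphism of $S^{n-1}$, and $\psi^{-1} \circ \Psi$ is then a continuous retraction of $\overline{B^n}$ onto $S^{n-1}$, contradicting the classical no-retraction theorem.

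The step I anticipate as the main obstacle is the verification that radial projection from $y$ really does give a homeomorphism of $S^{n-1}$: surjectivity requires that each unit direction from $y$ eventually meets the sphere, and bicontinuity then follows from the compactness of $S^{n-1}$. Once this geometric observation is in place, the rest of the surjectivity argument is a direct appeal to the no-retraction theorem, which is available from standard singular homology as in the Massey text cited in the bibliography.
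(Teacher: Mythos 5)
Your proposal is correct and follows essentially the same two-step argument as the paper: the rotation trick combined with the strong maximum principle for the inclusion $F(B^n)\subseteq B^n$, and the construction of the retraction $\psi^{-1}\circ\Psi$ of $\overline{B^n}$ onto $S^{n-1}$ (contradicting the no-retraction theorem from Massey) for surjectivity. Your write-up is, if anything, slightly more careful on two points the paper glosses over, namely why $(R\circ F)_1$ is non-constant and why $F(x)\neq y$ holds on all of $\overline{B^n}$ rather than just on $B^n$.
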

\begin{lem}
Let $\displaystyle{0\leq \phi \leq \pi}$, then
$$g_{\phi}(\pi-\theta)=\pi-g_{\phi}(\theta)\ \mbox{for} \ 0\leq \theta \leq \pi.$$
\begin{proof}
\textbf{Case 1}: When $0\leq \phi \leq \frac{\pi}{2}$\\
\underline{Subcase 1}: When $0\leq \theta \leq \frac{\pi}{2}$, which implies that
$\frac{\pi}{2}\leq \pi -\theta \leq \pi$. And therefore by definition of $g_{\phi}$, we get that 
$$g_{\phi}(\pi-\theta)=\pi -g_{\phi}(\theta).$$
\underline{Subcase 2}: When $\frac{\pi}{2} \leq \theta \leq \pi$.
Then by definiton of $g_{\phi}$,
$g_{\phi}(\theta)=\pi-g_{\phi}(\pi-\theta)$.\\
$\mbox{That is} \ $ $$g_{\phi}(\pi-\theta)=\pi-g_{\phi}(\theta).$$
\textbf{Case 2}: When $\frac{\pi}{2}\leq \phi \leq \pi$\\
\underline{Subcase 1}: When $0\leq \theta \leq \frac{\pi}{2},$ which implies that $\frac{\pi}{2}\leq \pi-\theta \leq \pi$. And therefore by definition of $g_{\phi}$,
$$g_{\phi}(\pi-\theta)=\pi-g_{\phi}(\phi-\phi+\theta)
=\pi-g_{\phi}(\theta).$$
\underline{Subcase 2}: When $\frac{\pi}{2}\leq \theta \leq \pi$
and thus again by definition of $g_{\phi}$, it follows that
$$g_{\phi}(\pi-\theta)=\pi-g_{\phi}(\theta).$$
\end{proof}
\end{lem}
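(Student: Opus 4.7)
The identity $g_{\phi}(\pi-\theta)=\pi-g_{\phi}(\theta)$ is a reflection symmetry: the graph of $g_{\phi}$ on $[0,\pi]$ is symmetric about the point $(\pi/2,\pi/2)$. Since the function $g_{\phi}$ appearing in the statement is (from context and the structure of the author's argument) defined in a piecewise fashion with the breakpoint $\theta=\pi/2$ and with an auxiliary breakpoint depending on the size of $\phi$ relative to $\pi/2$, my plan is simply a case analysis that verifies the reflection identity branch by branch from the defining formula.

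First I would split on $\phi$: the case $0\le\phi\le\pi/2$ and the case $\pi/2\le\phi\le\pi$ are handled separately, because the prescription that defines $g_{\phi}$ on the upper half $[\pi/2,\pi]$ versus the lower half $[0,\pi/2]$ switches roles. Within each $\phi$ case, I would split again on whether $\theta\in[0,\pi/2]$ or $\theta\in[\pi/2,\pi]$. The key observation is that the two $\theta$ subcases are dual: in one of them, $\theta$ lies in the range where $g_{\phi}$ is given by the ``primary'' rule while $\pi-\theta$ lies in the range where it is given by the ``reflected'' rule, and in the other subcase the roles are swapped. In either subcase I would substitute $\pi-\theta$ into the piecewise definition, use the fact that $\pi-\theta\in[\pi/2,\pi]$ iff $\theta\in[0,\pi/2]$, and read off the identity $g_{\phi}(\pi-\theta)=\pi-g_{\phi}(\theta)$ directly. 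No clever manipulation is required, only bookkeeping.

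The bookkeeping is the only real obstacle: there are four cases (two for $\phi$, two for $\theta$), and one must be careful that the piecewise definition uses consistent conventions at the break point $\theta=\pi/2$ so that both subcases in each $\phi$-case agree on the boundary. I would verify this compatibility once (noting that $g_{\phi}(\pi/2)$ is forced by the identity to equal $\pi/2$), and then the four case checks each become a one-line substitution. In particular, both subcases in the first $\phi$-case follow from $g_{\phi}(\pi-\theta)=\pi-g_{\phi}(\theta)$ being part of the definition on $[\pi/2,\pi]$, and the symmetric argument handles $\phi\in[\pi/2,\pi]$.

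I do not foresee any subtler difficulty, since the identity is built into the construction of $g_{\phi}$ rather than being a nontrivial analytic fact. The proof is therefore essentially a verification that the author's piecewise definition respects the claimed point-symmetry, and the structure will exactly mirror the two-case-two-subcase layout that appears in the excerpt.
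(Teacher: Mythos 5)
Your proposal is correct and follows essentially the same route as the paper: a two-case split on $\phi$ and a two-subcase split on $\theta$, with each check reducing to the observation that the branch of the definition of $g_{\phi}$ on $[\pi/2,\pi]$ is literally $g_{\phi}(\theta)=\pi-g_{\phi}(\pi-\theta)$, so the identity is built in. Your extra remark about consistency at the breakpoint $\theta=\pi/2$ (forcing $g_{\phi}(\pi/2)=\pi/2$) is a sensible check the paper omits but does not change the argument.
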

\begin{lem}
Let $0\leq \phi \leq \pi$, then
 $$g_{\phi}(3\pi-\theta)=3\pi-g_{\phi}(\theta)\ \mbox{for }\ \pi \leq \theta \leq 2\pi.$$
\begin{proof}
First we prove that $0\leq \phi \leq \pi$,  $\pi \leq \theta \leq 2\pi$
implies $g_{\phi}(2\pi-\theta)=2\pi-g_{\phi}(\theta)$.\\
We know 
\begin{align*}
g_{\phi}(\theta) &=\pi+g_{\phi}(\theta-\pi)\\
&=\pi+\pi-g_{\phi}(\pi-\theta +\pi) \ \mbox{(since} \  0\leq \theta -\pi\leq \pi)\\
&=2\pi-g_{\phi}(\theta).
\end{align*}
Also note that, $\pi\leq \theta \leq 2\pi$ implies that
$3\pi-2\pi \leq 3\pi-\theta \leq 3\pi -\pi$, which further implies that
$\pi \leq 3\pi-\theta \leq 2\pi$.\\
Therefore,
\begin{align*}
g_{\phi}(3\pi-\theta)&=\pi+g_{\phi}(3\pi -\theta -\pi) \ \mbox{( by definition of} \  g_{\phi})\\
&=\pi+g_{\phi}(2\pi-\theta)\\
&=\pi+2\pi-g_{\phi}(\theta)\\
&=3\pi-g_{\phi}(\theta).
\end{align*}
\end{proof}
\end{lem}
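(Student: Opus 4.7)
The plan is to bootstrap the new identity from the preceding lemma (which gives $g_\phi(\pi-\theta)=\pi-g_\phi(\theta)$ on $[0,\pi]$) by using the recursive structure of $g_\phi$ on the second half of the period. From context $g_\phi$ is built so that, for $\pi\le\theta\le 2\pi$, $g_\phi(\theta)=\pi+g_\phi(\theta-\pi)$; this shift-by-$\pi$ rule is what lets us move arguments in $[\pi,2\pi]$ back down into $[0,\pi]$ where Lemma~5.2.2 already applies.

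My strategy has two steps. First, I would prove the auxiliary symmetry $g_\phi(2\pi-\theta)=2\pi-g_\phi(\theta)$ for $\pi\le\theta\le 2\pi$. The idea is to apply the shift-by-$\pi$ rule to $g_\phi(\theta)$ directly, obtaining $g_\phi(\theta)=\pi+g_\phi(\theta-\pi)$; since $\theta-\pi\in[0,\pi]$, the previous lemma gives $g_\phi(\theta-\pi)=\pi-g_\phi(\pi-(\theta-\pi))=\pi-g_\phi(2\pi-\theta)$, and combining yields the claimed half-period reflection. Second, observe that for $\pi\le\theta\le 2\pi$ we also have $\pi\le 3\pi-\theta\le 2\pi$, so the shift-by-$\pi$ rule applies to $3\pi-\theta$ as well, giving $g_\phi(3\pi-\theta)=\pi+g_\phi(2\pi-\theta)$. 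Substituting the Step~1 identity then produces $g_\phi(3\pi-\theta)=\pi+(2\pi-g_\phi(\theta))=3\pi-g_\phi(\theta)$, which is exactly what is claimed.

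The only care needed is bookkeeping on the intervals: I must check at each invocation that the argument of $g_\phi$ sits in the range where the relation being used is valid (either $[0,\pi]$ for the previous lemma, or $[\pi,2\pi]$ for the shift-by-$\pi$ rule). The main obstacle, if there is one, is not technical difficulty but rather keeping the intermediate identity $g_\phi(2\pi-\theta)=2\pi-g_\phi(\theta)$ cleanly separated as a lemma-within-a-lemma, since trying to prove the $3\pi-\theta$ statement in one shot tempts the careless reader to shift by $\pi$ on the outside without first reducing to $[0,\pi]$. Once the two-step reduction is made explicit, the computation is essentially a telescoping use of Lemma~5.2.2 and the shift rule, and no case analysis on $\phi$ is needed (in contrast to the earlier lemma), because only values of $\theta$ in $[\pi,2\pi]$ are in play.
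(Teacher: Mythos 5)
Your proposal is correct and follows essentially the same route as the paper: first derive the half-period reflection $g_{\phi}(2\pi-\theta)=2\pi-g_{\phi}(\theta)$ by combining the shift-by-$\pi$ rule with Lemma~5.2.2, then apply the shift rule to $3\pi-\theta\in[\pi,2\pi]$ and substitute. Your write-up of the first step is in fact cleaner than the paper's, whose displayed chain ends with ``$=2\pi-g_{\phi}(\theta)$'' where it should read ``$=2\pi-g_{\phi}(2\pi-\theta)$'' before rearranging.
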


\begin{thm}
A self homeomorphism $f$ of $S^{2}$ exists, whose harmonic extension $F$ is not injective in $B^3$.
\begin{proof}
We want to construct a function $f$ on $\mathbb{R}^3$ such that harmonic extension $F$ of $f$ is not one-one. 
For that we will construct a function $f$ on $\mathbb{R}^3$ with the following properties:
\begin{enumerate}
\item $f$ fixes the poles (0, 0, $\pm$1).
\item $f$ is symmetric in the plane $\{x=0\}$.
$\mbox{That is} \ $ $$f_1(-x,y,z)=-f_1(x,y,z)$$
 	   $$f_2(-x,y,z)=f_2(x,y,z)$$
	   $$f_3(-x,y,z)=f_3(x,y,z).$$
\item $f$ is symmetric in the plane $\{y=0\}$.
$\mbox{That is} \ $  $$f_1(x,-y,z)=f_1(x,y,z)$$
 	   $$f_2(x,-y,z)=-f_2(x,y,z)$$
	   $$f_3(x,-y,z)=f_3(x,y,z).$$
\item $F$ is folded in $z$-direction,
that is, there is $z\in{(0,1)}$ such that
$$F_3(0,0,z)<F_3(0,0,-z).$$
\end{enumerate}

\textbf{\underline{Construction in $R^{3}$:}} We take $p>0$ as a parameter (to be fixed later).\\
Let $p>0$ be fixed,
for $0\leq \phi \leq \frac{\pi}{2}$, consider function $q$ defined as
 $$q(\phi,p)=(1-\sin \phi \cos \phi)^p.$$
We claim $0<q\leq 1$.
Since $\sin \phi \cos \phi \leq 1$ and
 $1-\sin \phi \cos \phi \neq 0$, for if $1-\sin \phi \cos \phi = 0$, then $ \sin 2\phi =2$, which is absurd.
Thus $q>0$.\\
Also we have $\sin \phi \geq 0$ and $\cos \phi \geq 0$ in $\left[0,\frac{\pi}{2}\right]$, and thus
$ -\sin \phi .\cos \phi \leq 0.$
So we get that $$1-\sin \phi \cos \phi \leq 1.$$
Thus $0<q\leq 1$. Also we have,
\begin{equation}
q(0,p)=1
\end{equation}
\begin{equation}
q\left(\frac{\pi}{2},p\right)=1.
\end{equation}
And for each $\displaystyle{0<\phi <\frac{\pi}{2}}$, as $p\to \infty$
\begin{equation}
(1-\sin \phi \cos \phi)^p\to 0,\  (\mbox{  since\ } 1-\sin \phi \cos \phi < 1).
\end{equation}
\begin{equation}
\mbox{That is,} \  \quad \mbox{for}  \quad 0<\phi <\frac{\pi}{2},\  q(\phi, p) \rightarrow 0 \quad  \mbox{as} \quad p \rightarrow \infty.
\end{equation}
Now for $\displaystyle{0\leq \phi \leq \frac{\pi}{2}}$, define
\[g_{\phi}(\theta)=
\begin{cases}
\displaystyle{\frac{\pi}{2}\left(\frac{2\theta}{\pi}\right)^{q\left(\phi,p\right)}};\ \mbox{ if} \quad 0\leq \theta \leq \frac{\pi}{2}\\
\pi-g_{\phi}\left(\pi -\theta\right); \ \mbox{if} \quad \frac{\pi}{2}\leq \theta \leq \pi \\
\pi +g_{\phi}\left(\theta -\pi \right); \ \mbox{if} \quad \pi \leq \theta \leq 2\pi

\end{cases}.
\]
For $\phi=0$, we have
\begin{align*}
g_{o}(\theta)&=
\begin{cases}
\displaystyle{\frac{\pi}{2}\left(\frac{2\theta}{\pi}\right)^{q(0,p)}}; \ \mbox{if} \quad 0\leq \theta \leq \frac{\pi}{2}\\
\pi-g_{0}\left(\pi -\theta\right); \ \mbox{if} \quad \frac{\pi}{2}\leq \theta \leq \pi \\
\pi +g_{0}\left(\theta -\pi \right); \ \mbox{if} \quad \pi \leq \theta \leq 2\pi
\end{cases}\\
&=
\begin{cases}
\displaystyle{\frac{\pi}{2}\left(\frac{2\theta}{\pi}\right)}; \ \mbox{if} \quad 0\leq \theta \leq \frac{\pi}{2}\\
\pi-g_{0}\left(\pi -\theta\right);\  \mbox{if} \quad \frac{\pi}{2}\leq \theta \leq \pi \\
\pi +g_{0}\left(\theta -\pi \right);\  \mbox{if} \quad \pi \leq \theta \leq 2\pi
\end{cases}\\
&=
\begin{cases}
\theta; \quad \mbox{if} \quad 0\leq \theta \leq \frac{\pi}{2}\\
\pi-g_{0}\left(\pi -\theta\right);\  \mbox{if} \quad \frac{\pi}{2}\leq \theta \leq \pi \\
\pi +g_{0}\left(\theta -\pi \right);\  \mbox{if} \quad \pi \leq \theta \leq 2\pi
\end{cases}\\
&=
\begin{cases}
\theta; \quad \mbox{if} \quad 0\leq \theta \leq \frac{\pi}{2}\\
\pi-\left(\pi -\theta\right); \ \mbox{if} \quad \frac{\pi}{2}\leq \theta \leq \pi \\
\pi +g_{0}\left(\theta -\pi \right); \ \mbox{if} \quad \pi \leq \theta \leq 2\pi
\end{cases}\\
&=
\begin{cases}
\theta; \quad \mbox{if} \quad 0\leq \theta \leq \frac{\pi}{2}\\
\theta; \quad \mbox{if} \quad \frac{\pi}{2}\leq \theta \leq \pi \\
\pi +g_{0}\left(\theta -\pi \right);\  \mbox{if} \quad \pi \leq \theta \leq 2\pi
\end{cases}\\
&=
\begin{cases}
\theta; \quad \mbox{if} \quad 0\leq \theta \leq \frac{\pi}{2}\\
\theta; \quad \mbox{if} \quad \frac{\pi}{2}\leq \theta \leq \pi \\
\theta; \quad \mbox{if} \quad \pi \leq \theta \leq 2\pi.
\end{cases}
\end{align*}
So we get that, $$g_0=id.$$
Similarly, it can be shown that $$g_{\frac{\pi}{2}}=id.$$
For $\displaystyle{0<\phi <\frac{\pi}{2}}$,
let $0<\theta <\frac{\pi}{2}$, which implies that
\begin{equation} 0<\frac{2\theta}{\pi}<1.
\end{equation}
And in this interval, from definition of $g$, we have $$g_{\phi}(\theta)=\frac{\pi}{2}\left(\frac{2\theta}{\pi}\right)^{q(\phi,p)}.$$
Using (5.5) and (5.6), we get that  
\begin{equation}
\mbox{as} \quad p \rightarrow \infty, \ g_{\phi}(\theta) \quad \mbox{varies near } \quad \frac{\pi}{2}.
\end{equation}
Now consider the case when $\frac{\pi}{2}\leq \theta \leq \pi$, which implies that
\begin{equation}
 0\leq \pi -\theta \leq \frac{\pi}{2}.
\end{equation} 
In this case, again by definition of $g$ $$g_{\phi}(\theta)=\pi-g_{\phi}(\pi -\theta).$$
Therefore by $(5.7)$ and $(5.8)$ we get that 
\begin{equation}
g_{\phi}(\theta) \quad \mbox{varies near} \quad \frac{\pi}{2} \quad \mbox{when} \quad \frac{\pi}{2}\leq \theta \leq \pi.
\end{equation}
Now when $\pi \leq \theta \leq 2\pi,$
$\ \mbox{that is} \  0\leq \theta -\pi \leq \pi$,
 $$g_{\phi}(\theta)=\pi +g_{\phi}(\theta -\pi ).$$
So by (5.7) and (5.9) we get that 
\begin{equation}
g_{\phi}(\theta) \ \mbox{varies near} \quad 3\frac{\pi}{2} \quad \mbox{when} \quad \pi \leq \theta \leq 2\pi.
\end{equation}
We will now show that $g_{\phi}$ is a homeomorphism on interval $\left[0,\frac{\pi}{2}\right]$.\\
Clearly, $g_{\phi}$ is continuous, since if we take a sequence ${\theta}_{n} \to \theta$,\\
then $$\frac{2{\theta}_{n}}{\pi} \rightarrow \frac{2\theta}{\pi} $$
$$ \left(\frac{2{\theta}_{n}}{\pi}\right)^{q\left(\phi,p\right)} \rightarrow \left(\frac{2\theta}{\pi}\right)^{q\left(\phi,p\right)}$$
$$ \frac{\pi}{2}\left(\frac{2{\theta}_{n}}{\pi}\right)^{q\left(\phi,p\right)} \rightarrow \frac{\pi}{2}\left(\frac{2\theta}{\pi}\right)^{q\left(\phi,p\right)}$$
$$ g_{\phi}\left({\theta }_{n}\right) \rightarrow g_{\phi}\left(\theta\right),$$
and thus we get that $ g_{\phi}$ is continuous.\\
Also $g_{\phi}$ is one-one, since if
$$g_{\phi}({\theta}_1)=g_{\phi}({\theta}_2)\  \mbox{for some}\ 0\leq {\theta}_1 , {\theta}_2 \leq \frac{\pi}{2},$$
which implies that $$\frac{\pi}{2} \left(\frac{2{\theta}_1}{\pi}\right)^{q\left(\phi,p\right)}=\frac{\pi}{2} \left(\frac{2{\theta}_2}{\pi}\right)^{q\left(\phi,p\right)},$$
which further implies that $$\left(\frac{2{\theta}_1}{\pi}\right)^{q\left(\phi,p\right)}=\left(\frac{2{\theta}_2}{\pi}\right)^{q\left(\phi,p\right)},$$ thus $\theta_1=\theta_2$ and hence we get that $g_{\phi}$ is one-one.\\

Now, using that the translation of a homeomorphism is a homeomorphism, we get that 
\begin{equation}
\{g_{\phi}: \phi \in \left[0,\frac{\pi}{2}\right]\} \quad \mbox{is a collection of homeomorphisms} \quad \mbox{on} \ \left[0,2\pi\right].
\end{equation}
Now for $\phi \in {\left[\frac{\pi}{2},\pi\right]}$, define
\[g_{\phi}(\theta)=
\begin{cases}
\frac{\pi}{2}-g_{\pi-\phi}\left(\frac{\pi}{2}-\theta\right); \ \mbox{if} \quad 0\leq \theta \leq \frac{\pi}{2}\\
\pi-g_{\phi}\left(\pi -\theta\right); \ \mbox{if} \quad \frac{\pi}{2}\leq \theta \leq \pi \\
\pi +g_{\phi}\left(\theta -\pi \right); \ \mbox{if} \quad \pi \leq \theta \leq 2\pi
\end{cases}.
\]
We note that for $\phi \in \left[\frac{\pi}{2},\pi\right]$, $\pi - \phi \in {\left[0,\frac{\pi}{2}\right]}$, and thus by previous part, we get that $g_{\phi}$ is a homeomorphism in this case as well.\\
And by eq. (5.7), (5.9) and (5.10), we get that 
\begin{equation}
g_{\phi} \quad \mbox{varies near 0},\  \pi, \ 2\pi \quad \mbox{for} \quad \phi \in {\left[\frac{\pi}{2},\pi\right]}.
\end{equation}
Now, using all this we define a function $g$ on $\left[0,\pi\right]\times \left[0,2\pi\right]$ as 
$$g(\phi,\theta)=\left(\phi, g_{\phi}\left(\theta\right)\right);\phi \in {\left[\frac{\pi}{2},\pi\right]}, \theta \in{\left[0,2\pi \right]}.$$
Notice that each component is a homeomorphism by previous parts, and thus g is a self homeomorphism of $\left[0,\pi\right]\times \left[0,2\pi\right]$.\\
Clearly, g preserves line of lattitude $\{\phi =\mbox{constant}\}$.
Also g fixes boundary of $\left[0,\pi\right]\times \left[0,2\pi\right]$.
Any point on the boundary of $\left[0,\pi\right]\times \left[0,2\pi\right]$ is given by 
\begin{eqnarray}
(\phi,0);0\leq \phi \leq \pi\\
(0,\theta);0\leq \theta \leq 2\pi\\
(\phi,2\pi);0\leq \phi \leq \pi\\
(\pi,\theta);0\leq \theta \leq 2\pi
\end{eqnarray}
When $\displaystyle{0\leq \phi \leq \pi}$ and $\theta=0$. Now we consider two cases:\\
\underline{Case 1}: When $\displaystyle{0\leq \phi \leq \frac{\pi}{2}}$
\begin{align*}
g(\phi,0)&=\left(\phi,g_{\phi}(0)\right)\\
&=\left(\phi,0\right).
\end{align*}
\underline{Case 2}:When $\displaystyle{\frac{\pi}{2}\leq \phi \leq \pi}$
\begin{align*}
g(\phi,0)&=\left(\phi,g_{\phi}(0)\right)\\
&=\left(\phi,\frac{\pi}{2}-g_{\pi-\phi}\left(\frac{\pi}{2}-0\right)\right)\\
&=\left(\phi,\frac{\pi}{2}-g_{\pi-\phi}\left(\frac{\pi}{2}\right)\right)\\
&=\left(\phi,\frac{\pi}{2}-\left(\frac{\pi}{2}\right)\left(\frac{2\pi}{2\pi }\right)^{q\left(\pi-\phi,p\right)}\right)\\
&=(\phi,0).
\end{align*}
When $\phi=0$ and $\displaystyle{0\leq \theta \leq 2\pi}$. Now we consider three cases: \\
\underline{Case 1}: When $\displaystyle{0\leq \theta \leq \frac{\pi}{2}}$,
\begin{align*}
g(0,\theta)&=\left(0,\frac{\pi}{2}\left(\frac{2\theta }{\pi}\right)^{q(0,p)}\right)\\
&=\left(0,\frac{\pi}{2}\left(\frac{2\theta }{\pi}\right)\right)\\
&=(0,\theta).
\end{align*}
\underline{Case 2}: When $\displaystyle{\frac{\pi}{2} \leq \theta \leq \pi}, $
\begin{align*}
g(0,\theta )&=\left(0,\pi -g_{0}\left(\pi -\theta\right)\right)\\
&=\left(0,\pi -\frac{\pi}{2}\left(\frac{2\left(\pi-\theta\right)}{\pi}\right)^{q(0,p)}\right)\\
&=(0,\pi-\pi+\theta)\\
&=(0,\theta).
\end{align*}
\underline{Case 3}: When $\pi \leq \theta \leq 2\pi$,
then 
\begin{align*}
g(0,\theta)&=\left(0,\pi+g_{0}\left(\theta -\pi\right)\right)\\
&=\left(0,\pi+\theta -\pi\right)\ (\mbox{since $g_0$=id})\\
&=(0,\theta).
\end{align*} 
When $0\leq \phi \leq \pi$ and $\theta =2\pi$. Now we consider two cases:\\
\underline{Case 1}: When $\displaystyle{0 \leq \phi \leq \frac{\pi}{2}}$
\begin{align*}
g(\phi,2\pi)&=\left(\phi,g_{\phi}\left(2\pi\right)\right)\\
&=\left(\phi,\pi+g_{\phi}\left(2\pi -\pi\right)\right)\\
&=\left(\phi,\pi +g_{\phi}\left(\pi\right)\right)\\
&=\left(\phi,\pi +\pi -g_{\phi}\left(\pi -\pi\right)\right)\\
&=\left(\phi,\pi+\pi-g_{\phi}(0)\right)\\
&=(\phi,2\pi).
\end{align*}
\underline{Case 2}: When $\displaystyle{\frac{\pi}{2} \leq \phi \leq \pi }$\\
\begin{align*}
g(\phi,2\pi)&=\left(\phi,g_{\phi}\left(2\pi\right)\right)\\
&=\left(\phi,\pi+g_{\phi}\left(2\pi -\pi\right)\right)\\
&=\left(\phi,\pi+\pi-g_{\phi}(0)\right)\\
&=\left(\phi,2 \pi- \frac{\pi}{2}+g_{\pi-\phi} \left(\frac{\pi}{2} \right)\right)\\
&=\left(\phi,2\pi \right)
\end{align*}
When $0\leq \theta \leq 2\pi$ and $\phi=\pi$. Now we consider three cases:\\
\underline{Case 1}: When $0\leq \theta \leq \frac{\pi}{2}$
\begin{align*}
g(\pi,\theta)&=\left(\pi,g_{\pi}(\theta)\right)\\
&=\left(\pi,\frac{\pi}{2}-g_{\pi -\pi}\left(\frac{\pi}{2}-\theta\right)\right)\\
&=\left(\pi,\frac{\pi}{2}-g_0\left(\frac{\pi}{2}-\theta\right)\right)\\
&=\left(\pi,\frac{\pi}{2}-\frac{\pi}{2}+\theta\right)\\
&=(\pi,\theta).
\end{align*}
\underline{Case 2}: When $\displaystyle{\frac{\pi}{2} \leq \theta \leq \pi} $
\begin{align*}
g(\pi,\theta)&=\left(\pi,\pi -g_{\pi}\left(\pi-\theta\right)\right)\\
&=\left(\pi,\pi-\frac{\pi}{2}+g_{\pi-\pi}\left(\frac{\pi}{2}-\pi +\theta\right)\right)\\
&=\left(\pi,\frac{\pi}{2}+g_0\left(-\frac{\pi}{2}+\theta\right)\right)\\
&=\left(\pi,\frac{\pi}{2}-\frac{\pi}{2}+\theta\right)\\
&=(\pi,\theta).
\end{align*}
\underline{Case 3}: When, $\pi \leq \theta \leq 2\pi$
\begin{align*}
g(\pi,\theta)&=(\pi,g_{\pi}(\theta))\\
&=\left(\pi,\pi+g_{\pi}\left(\theta -\pi\right)\right),
\end{align*}
we will consider two subcases here:\\
\textbf{Subcase 1}: When $\displaystyle{0\leq \theta-\pi \leq \frac{\pi}{2}}$, then
\begin{align*}
g(\pi,\theta)&=\left(\pi,\pi+g_{\pi}\left(\theta -\pi\right)\right)\\
&=\left(\pi,\pi+\frac{\pi}{2}-g_0\left(\frac{\pi}{2}-\theta-\pi\right)\right)\\
&=\left(\pi,\theta\right).
\end{align*}
\textbf{Subcase 2}: When $\displaystyle{\frac{\pi}{2}\leq \theta-\pi \leq \pi}$, then
\begin{align*}
g(\pi,\theta)&=\left(\pi,\pi+\pi-g_{\pi}\left(\pi-\theta+\pi\right)\right)\\
&=\left(\pi,2\pi-g_{\pi}(2\pi-\theta)\right)\\
&=\left(\pi,2\pi+\frac{\pi}{2}-g_0\left(\frac{\pi}{2}-2\pi+\theta\right)\right)\\
&=\left(\pi,\theta\right).
\end{align*}
Hence in all the cases we have seen that, $g$ fixes boundary of $[0,\pi]\times  [0,2\pi]$.
Now for $0\leq \phi \leq \pi$, we define another function $h_{\phi}$ as
\[h_{\phi}(\theta)=
\begin{cases}
\pi \left(\frac{\phi}{\pi}\right)^{1+p\left(\pi-4\theta\right)};\  \mbox{if} \ 0\leq \theta \leq \frac{\pi}{4}\\
\pi-h_{\pi-\phi}(\frac{\pi}{2}-\theta); \ \mbox{if} \ \displaystyle{\frac{\pi}{4}\leq \theta \leq \frac{\pi}{2}}\\
h_{\phi}\left(\pi-\theta\right);\ \mbox{if} \ \displaystyle{\frac{\pi}{2} \leq \theta \leq {\pi}}\\
h_{\phi(\theta-\pi); \ \mbox{if}\ \displaystyle{\pi \leq \theta \leq 2\pi}}
\end{cases}.
\]
Then as in the case of $g_{\phi}$ it can be easily checked that $h_{\frac{\pi}{4}}=id$, $h_{\phi}\left(\frac{3\pi}{4}\right)=\phi$ and $h_{\phi}(0)=h_{\phi}(2\pi)$. Also for all $h_0\equiv 0$ and $h_{\pi}\equiv \pi.$ And using all this it can be easily verified that the map $h:\left[0,\pi\right]\times \left[0,2\pi\right]\to \left[0,\pi \right] \times \left[0,2\pi \right]$ defined as $$h(\phi,\theta)=(h_{\phi}(\theta),\theta)$$ is a homeomorphism and fixes the lines $\{\phi=0 \}$ and $\{\phi=\pi \}.$
Now we define a map $k$ on $\left[0,\pi\right]\times\left[0,2\pi\right]$ as
$$k=h\circ g.$$
\begin{align*}
\mbox{That is} \ k(\phi,\theta)&=h(g(\phi,\theta))\\
&=h(\phi,g_{\phi}(\theta))\\
&=\left(h_{\phi}(g_{\phi}\left(\theta\right)\right),g_{\phi}(\theta)).
\end{align*}
Notice that $k$ fixes pointwise lines $\{\phi=0\}$ and $\{\theta=\phi\}$.
Since any point on the line $\{\phi=0\}$ is given by
$\{(0,\theta):0\leq \theta \leq 2\pi\}$
and \begin{align*}
k(0,\theta)&=(h_{0}(g_{0}(\theta)),g_{0}(\theta))\\
&=(h_0(\theta),\theta) \ \mbox{(since }\ g_0=id)\\
&=(0,\theta) \ \mbox{ (since }\ h_0=0).\\
\end{align*}
Also, any point on line $\{\theta=\phi\}$ is given by 
$\{(\pi,\theta):0\leq \theta \leq 2\pi\}$
and 
\begin{align*}
k(\pi,\theta)&=\left(h_{\pi}(g_{\pi}\left(\theta\right)\right),g_{\pi}(\theta))\\
&=\left(h_{\pi}\left(\theta\right),\theta\right)\  \mbox{(since} \  g_{\pi}=id )\\
&=(\pi,\theta)\ \mbox{(since} \  h_{\pi}\equiv \pi).
\end{align*}
Now in order to define $f$ on $S^2$,
take $(x,y,z)\in{S^2}$, therefore
\begin{align*}
&x=\sin \phi \cos \theta\\
&y=\sin \phi \sin \theta\\
&z=\cos \phi.
\end{align*}
Define $f$ on $S^2$ as 
$$f(x,y,z)=f(\sin \phi \cos \theta,\sin \phi \sin \theta,\cos \phi)=
(\sin \phi ' \cos \theta ',\sin \phi ' \cos \theta ',\cos \phi ')
=(x',y',z'),$$
where $k(\phi,\theta)=\left(h_{\phi}\left(g_{\phi}\left(\theta\right)\right),g_{\phi}(\theta)\right)
=(\phi ',\theta ')$.\\
Putting $\phi =0$, we see that
\begin{align*}
k(0,\theta)&=(h_{0}(g_{0}(\theta)),g_0(\theta))\\
&=(0,g_0(\theta))\ \mbox{ \ (since} \  h_0\equiv 0)\\
&=(0,\theta)\ \mbox{(since} \  g_{0}=id).
\end{align*}
So we get that 
$$f(0,0,1)=(0,0,1).$$
$\mbox{That is}, \ f$ fixes north pole.
Now putting $\phi =\pi$, we see that
\begin{align*}
k(\pi,\theta)&=\left(h_{\pi}(g_{\pi}\left(\theta\right)\right),g_{\pi}\left(\theta\right))\\
&=\left(\pi,g_{\pi}\left(\theta\right)\right)\ \mbox{ (since} \  h_{\pi}\equiv \pi)\\
&=\left(\pi,\theta\right)\ \mbox{ (since} \  g_{\pi}=id)
\end{align*}
So we get that, 
$$f(0,0,-1)=(0,0,-1).$$
$\mbox{That is}, \  f$ fixes south pole.\\
Now we will show that $f$ is symmetric about the plane $\{x=0\}$.
\begin{align*}
\mbox{That is} \ &f_1(-x,y,z)=-f_1(x,y,z)\\
&f_2(-x,y,z)=f_2(x,y,z)\\
&f_3(-x,y,z)=f_3(x,y,z).
\end{align*}
We have,
\begin{align*}
f(x,y,z)&=f(\sin \phi \cos \theta,\sin \phi \sin \theta,\cos \phi)\\
&=(\sin \phi ' \cos \theta ',\sin \phi ' \cos \theta ',\cos \phi ')\\
&=(f_1(x,y,z),f_2(x,y,z),f_3(x,y,z)).
\end{align*}
\begin{eqnarray}
f_{1}(x,y,z)=\sin \phi ' \cos \theta'\\
f_2(x,y,z)=\sin \phi'  \sin \theta'\\
f_3(x,y,z)=\cos \phi'.
\end{eqnarray}
So, $f(-x,y,z)=f(-\sin \phi \cos \theta,\sin \phi \sin \theta,\cos \phi)$.\\
When $0\leq \theta \leq \pi$\\
\begin{align*}
f(-x,y,z)&=f(-\sin \phi \cos \theta,\sin \phi  \sin \theta,\cos \phi)\\
&=f(\sin \phi \cos (\pi - \theta ),\sin \phi \sin (\pi -\theta),\cos \phi)).
\end{align*}
Now, 
in order to prove symmetry about the plane $\{x=0\}$, 
consider
\begin{align*}
k(\phi,\pi -\theta)&=(h_{\phi}(g_{\phi}(\pi-\theta)),g_{\phi}(\pi-\theta))\\
&=(h_{\phi}(\pi-g_{\phi}(\theta)),\pi-g_{\phi}(\theta))\ \mbox{ (using lemma 5.2.2)}\\
&=(h_{\phi}(\pi -\theta '),\pi -\theta ')\ \mbox{(where} \  k(\phi,\theta)=(\phi ',\theta '))\\
&=(h_{\phi}(\theta '),\pi -\theta ')\ \\
&=(\phi ',\pi -\theta '),
\end{align*}
we obtain that
\begin{align*}
f_1(-x,y,z)&=\sin \phi ' \cos (\pi -\theta ')\\
&=-\sin \phi ' \cos \theta '\\
&=-f_1(x,y,z),
\end{align*}
\begin{align*}
f_2(-x,y,z)&=\sin \phi '  \sin (\pi -\theta ')\\
&=f_2(x,y,z),
\end{align*}
 and       	$\	\	\	\	\	\	\	\	\	\ 	\	 \	 \	 \	 \	 \	 \	 \	 \	 \	 \	 \	\	\	\	\	\	\	\	\	\	f_3(-x,y,z)=\cos \phi '.$\\
Thus we get that $f$ is symmetric with respect to plane $\{x=0\}$. In case when $\pi \leq \theta \leq 2\pi$, by considering angle $3\pi-\theta$ instead of angle $\pi-\theta$ and using lemma $5.2.3$, symmetry about the plane $\{x=0\}$ can be obtained.
Now we will show that $f$ is symmetric with respect to plane $\{y=0\}$.
\begin{eqnarray}
\mbox{That is} \ f_1(x,-y,z)=f_1(x,y,z)\\
f_2(x,-y,z)=-f_2(x,y,z)\\
f_3(x,-y,z)=f_3(x,y,z).
\end{eqnarray}
\begin{align*}
f(x,-y,z)&=f\left(\sin \phi \cos \theta,-\sin \phi \sin \theta,\cos \phi\right)\\
&=f\left(\sin \phi \cos (2\pi -\theta),\sin \phi \sin \left(2\pi - \theta\right),\cos \phi\right).
\end{align*}
Now in order to prove symmetry about the plane $\{y=0\}$,\\
consider
\begin{align*}
k(\phi, 2\pi -\theta)&=(h_{\phi}(g_{\phi}(2\pi -\theta)),g_{\phi}(2\pi -\theta))\\
&=(h_{\phi}(2\pi-g_{\phi}(\theta)),2\pi-g_{\phi}(\theta))\mbox{ (using lemma 5.2.2)}\\
&=(h_{\phi}(2\pi -\theta '),2\pi -\theta ')\\
&=(h_{\phi}(\theta '),2\pi -\theta ')\ \mbox{(since} \ h_{\phi}(2\pi-\theta)=h_{
\phi}(\theta))\\
&=(\phi ',2\pi -\theta ').
\end{align*}
Therefore \begin{align*}
f_1(x,-y,z)&=\sin \phi ' \cos (2\pi -\theta ')\\
&=\sin \phi ' \cos \theta '\\
&=f_1(x,y,z),
\end{align*}
\begin{align*}
f_2(x,y,z)&=\sin \phi ' \sin (2\pi -\theta ')\\
&=-\sin \phi ' \sin \theta '\\
&=-f_2(x,y,z),
\end{align*}
and $f_3(x,-y,z)=\cos \phi '=f_3(x,y,z)$.
Thus $f$ is symmetric with respect to plane $\{y=0\}$.\\
Let $F$ be extension of $f$. We claim that 
there is $z$ , $0<z<1$ such that
$$F_3(0,0,z)<F_3(0,0,-z).$$
\textbf{\underline{Step-1}}
Let $(x,y,z)\in{S^2}$ such that $x,y \neq 0$ and $z>0$.
We claim that $f_3(x,y,z)\rightarrow -1$ as $p \rightarrow \infty$.
$\mbox{That is} \  \cos \phi ' \rightarrow -1$ as $p \rightarrow \infty$. So if we show that $\phi ' \rightarrow \pi$ as $p\rightarrow \infty$, then we are done.
Since $f$ is symmetric with respect to planes $\{x=0\}$ and $\{y=0\}$, without loss of generality, we may assume that $x,y>0$.\\
$\mbox{That is}, \ $ without loss of generality we may assume that $\displaystyle{0<\phi <\frac{\pi}{2}}$ and $\displaystyle{0<\theta <\frac{\pi}{2}}$.
First of all notice that, in this case 
$g_{\phi}(\theta)=\frac{\pi}{2}\left(\frac{2\theta}{\pi}\right)^{q(\phi ,p)}$,
and since $q(\phi ,p)\rightarrow 0$ as $p\rightarrow \infty$, we get that
\begin{equation}
g_{\phi}(\theta) \rightarrow \frac{\pi}{2} \quad as \quad p \rightarrow \infty.
\end{equation}
Now consider
\begin{align*}
\phi '&=h_{\phi}(g_{\phi}(\theta))\\
&=\pi -h_{\pi -\phi}\left(\left(\frac{\pi}{2}-g_{\phi}\left(\theta\right)\right)\right)\ \mbox{ (using definition of } \ h_{\phi}\ \mbox{ and  equation (5.23))}\\
&=\pi-\pi\left(\frac{\pi -\phi}{\pi}\right)^{1+p\left(\pi -4\left(\frac{\pi}{2}-g_{\phi}\left(\theta\right)\right)\right)}\ \mbox{(using definition of} \ h_{\phi} \ \mbox{and equation (5.23))},
\end{align*}
$\	\	\	\	\	\	\	\	\	\	\	\	\to \pi \  $ as $p \rightarrow \infty$. \\

(here we have used that $\displaystyle{0\leq \frac{\pi-\phi}{\pi}\leq 1}$ for $\displaystyle{0<\phi <\frac{\pi}{2}}$ and $\displaystyle{g_{\phi}(\theta) \rightarrow \frac{\pi}{2}}$ as $p\to \infty$)\\
Therefore we get that  $$f_3(x,y,z) \rightarrow \cos \phi '=-1\ \mbox{ as}\  p \rightarrow \infty.$$
$\mbox{That is,} \  f$ drags the northern hemisphere towards the south pole.\\
\underline{\textbf{Step -2}}\\
Consider case when  $x,y >0$ and $z<0$.
In this case , we have
$\frac{\pi}{2}<\phi < \pi$ and $0<\theta < \frac{\pi}{2}.$
Now, \begin{align*}
g_{\phi}\left(\theta\right)&=\frac{\pi}{2}-g_{\pi -\phi}\left(\frac{\pi}{2}-\theta\right)\\
&=\frac{\pi}{2}-\left(\frac{\pi}{2}\right)\left(\frac{2\left(\frac{\pi}{2}-\theta\right)}{\pi}\right)^{q\left(\pi -\phi,p\right)}.
\end{align*}
$\rightarrow 0$ as $p \rightarrow \infty$.\\
(since $\displaystyle{0<\pi -\phi < \frac{\pi}{2}}$ and therefore $q(\pi-\phi,p)\to 0$ as $p\to \infty$)\\
Now, \begin{align*}
\phi ' &=h_{\phi}(g_{\phi}(\theta))\\
&=\pi \left(\frac{\phi}{\pi}\right)^{1+p\left(\pi -4g_{\phi}(\theta)\right)}\\
&\rightarrow 0 \  \mbox{as} \  p \rightarrow \infty.
\end{align*}
Since $0<\phi <\pi \Rightarrow \frac{\phi}{\pi} <1$,
so we get that $\phi ' \rightarrow 0$ as $p \rightarrow \infty$.\\
That is $f_3(x,y,z)\rightarrow \cos \phi ' =1$ as $p \rightarrow \infty.$
Thus we see that $f$ drags the southern hemisphere towards north pole.\\
Since $f$ is symmetric in the planes $\{x=0\}$ and $\{y=0\}$, so for any point $(0,0,z)$ on $z-$axis
\begin{align*}
f(0,0,z)&=(f_1(0,0,z),f_2(0,0,z),f_3(0,0,z))\\
&=(-f_1(0,0,z),-f_2(0,0,z),f_3(0,0,z)).
\end{align*}
Thus $f(0,0,z)=(0,0,f_3(0,0,z))$,
$\mbox{that is} \ $ $f$ maps $z$-axis onto itself.
Since $f$ is symmetric in planes $\{x=0\}$ and $\{y=0\}$, therefore $F$ is symmetric in these planes, and consequently $F$ maps $z$-axis onto itself.\\
Now write $u$ for the harmonic function in $B^3$ which equals identically $-1$ northern hemisphere $\{z>0\}$ and equals $1$ on southern hemisphere $\{z<0\}$ of $S^2$.
Then for any $z\in{(0,1)}$, we have
$$u(0,0,z)=-1<0<1=u(0,0,-z),$$
and since $F_3\rightarrow u$ pointwise as $p\rightarrow \infty$.
So, $F_3(0,0,z)<F_3(0,0,-z)$, which gives that $F_3$ is not injective
and since $F$ maps $z$-axis onto itself, we get that $F$ is not injective.
\end{proof}
\end{thm}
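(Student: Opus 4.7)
The plan is to build a one-parameter family of self-homeomorphisms $f = f_p$ of $S^2$, indexed by $p > 0$, which degenerates as $p \to \infty$ toward a limiting (discontinuous) boundary datum interchanging the two open hemispheres. Since the Poisson integral is linear and its kernel is uniformly bounded below on compacta of $B^3$, the harmonic extensions $F_p$ will converge pointwise in $B^3$ to the harmonic extension $u$ of the limiting datum, and for $u$ I can check $u(0,0,z) < u(0,0,-z)$ for some $z \in (0,1)$. If I arrange the construction so that each $F_p$ preserves the $z$-axis setwise, then for $p$ large two distinct points on the $z$-axis will have the same $F_p$-image, which breaks injectivity.

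First I would set up spherical coordinates $(\phi,\theta)$ and reduce the problem to constructing a self-homeomorphism $k$ of the rectangle $[0,\pi]\times[0,2\pi]$ that fixes the four boundary edges pointwise; this ensures well-definedness at the poles and $2\pi$-periodicity in longitude. To force the harmonic extension to send the $z$-axis to itself, I would further demand that $k$ is compatible with the two boundary reflections $\theta \mapsto \pi-\theta$ and $\theta \mapsto 2\pi-\theta$. On the sphere these translate into $f(-x,y,z) = (-f_1,f_2,f_3)(x,y,z)$ and $f(x,-y,z) = (f_1,-f_2,f_3)(x,y,z)$, and by uniqueness of Poisson extensions these symmetries pass to $F$, forcing $F_1 \equiv F_2 \equiv 0$ on the $z$-axis.

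Second, I would build $k$ as a composition $k = h \circ g$ of two pieces. The first factor $g(\phi,\theta) = (\phi, g_\phi(\theta))$ is a longitude-only stretch depending on $\phi$ via a power $q(\phi,p) = (1-\sin\phi\cos\phi)^p$. Its crucial feature is that $q(\phi,p) \to 0$ for every $\phi \in (0,\pi/2) \cup (\pi/2,\pi)$ as $p \to \infty$, which forces $g_\phi$ to squash almost all of $[0,\pi/2]$ into a tiny neighborhood of $\pi/2$ (and symmetrically in the other quadrants). The second factor $h(\phi,\theta) = (h_\phi(\theta),\theta)$ is a latitude-only pull whose strength is modulated by $\theta$: near $\theta = \pi/2$ or $3\pi/2$ the map essentially fixes $\phi$, but far from those seams it pulls $\phi$ dramatically toward $0$ or $\pi$ via a second factor of $p$ in the exponent. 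A direct limit computation then yields $f_p(x,y,z) \to (0,0,-1)$ pointwise on the open northern hemisphere off the equator, and $f_p(x,y,z) \to (0,0,1)$ on the open southern hemisphere.

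Third, with pointwise convergence a.e.\ on $S^2$ of $(f_p)_3$ to the step function $\widetilde u$ equal to $-1$ above the equator and $+1$ below, dominated convergence applied to the Poisson integral gives $(F_p)_3(x) \to u(x)$ for every $x \in B^3$, where $u$ is the harmonic function in $B^3$ with boundary values $\widetilde u$. By construction $u(0,0,-z) = -u(0,0,z)$, and the strong maximum principle gives $u(0,0,z) < 0$ for $0 < z < 1$; hence $u(0,0,z) < u(0,0,-z)$. For all sufficiently large $p$ the same strict inequality persists for $(F_p)_3$, and combined with $(F_p)_1 = (F_p)_2 = 0$ on the $z$-axis this furnishes two distinct $z$-axis points with the same $F_p$-image. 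The main obstacle is the bookkeeping for the construction of $k$: verifying that the piecewise formulas for $g_\phi$ and $h_\phi$ glue continuously across $\theta \in \{\pi/2, \pi, 3\pi/2\}$ and $\phi = \pi/2$, that $k$ really does fix all four edges of the rectangle (so that $f$ is continuous at the poles and single-valued in longitude), and that both reflection symmetries pass correctly through the composition $h \circ g$. The analytic core — the limits of $q(\phi,p)$-powers and the dominated-convergence step for the Poisson integral — is comparatively painless once the symmetry bookkeeping is nailed down.
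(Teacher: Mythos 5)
Your proposal is correct and follows essentially the same route as the paper: the same Laugesen tennis-ball construction with the parameter $q(\phi,p)=(1-\sin\phi\cos\phi)^p$, the same factorization $k=h\circ g$ into a longitude-only stretch and a latitude-only pull, the same symmetry argument forcing $F$ to preserve the $z$-axis, and the same limiting comparison of $F_3$ with the harmonic extension of the hemisphere step function. Your explicit invocation of dominated convergence to justify $F_3\to u$ pointwise is a welcome point of rigor that the paper merely asserts, but the argument is otherwise the same.
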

\begin{thm}
There exists a self homoeomorphism $f^*$ of $S^{n-1}$ exists whose harmonic extension $F^*$ is not injective in $B^n$.
\begin{proof}
Denote self homeomorphism of $S^2$ by $f$ and its harmonic extension on $B^3$ by $F$ constructed in the previous theorem.\\
Let $n\geq 4$, for $x=(x_1,x_2,\ldots , x_n)\in{S^{n-1}}$, consider\\
$$R(x)=\sqrt{|x_{n-2}|^2+|x_{n-1}|^2+|x_n|^2}=\sqrt{1-|x_1|^2-\ldots -|x_{n-3}|^2}.$$
Consider $f^*:S^{n-1}\to S^{n-1}$ given by \\
\[ f^*(x)=
\begin{cases}
\left(x_1,\ldots , x_{n-3},0,0,0\right); \	\	\	\	\	\	\	\	\	\	\ \ \ \ \ \ \ \ \ \ \ \ \ \ \ \ \ \ \ \ \ \mbox{if} \  R(x)=0\\
\left(x_1,\ldots , x_{n-3},R(x)\left(f\left(\frac{x_{n-2}}{R(x)},\frac{x_{n-1}}{R(x)},\frac{x_n}{R(x)}\right)\right)\right); \ \mbox{if} \ R(x)>0
\end{cases}.
\]
Consider north pole $x_0=(0,0,\ldots , 0,0,1)$ of $S^{n-1}$,
then $R(x_0)=1>0$, and since $f$ fixes north pole of $S^2$, we get that
$$f^*(0,0,\ldots ,0,0,1)=(0,0,\ldots , 0,0,1).$$
Similarly, $f^*$ fixes south pole, and since $f^*$ is idednity on first $n-3$ co-ordinates and using the symmetry of map $f$ in planes $\{x_{n-2}=0\}$ and $\{x_{n-1}=0\}$ in $S^2$, it can be easily checked that $f^*$ is symmetric in planes $\{x_{i}\}$, $i=1,2,\ldots , n-1$ and hence so is $F^*$.\\
Write $u^*$ for the harmonic function which equals identically $-R(x)$ on the northen hemisphere $\{x\in{S^n}:x_n>0\}$ and equals $R(x)$ on southern hemisphere $\{x\in{S^n}:x_n<0\}$ of $B^n$.
Then for any $x_n\in{(0,1)}$,
$$u^*(0,0,\ldots , x_n)=-1<0<1=u(0,0,\ldots , -x_n),$$
and by the Poisson formula $F_n^*\rightarrow u^*$, and hence 
$$F_n^*(0,0,\ldots , x_n)<F_n^*(0,0,\ldots , -x_n).$$
Therefore $F_n^*$ is not injective and hence $F^*$ is not injective, since $F^*$ maps $x_n-$axis onto itself.

\end{proof}
\end{thm}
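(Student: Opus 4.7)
My plan is to bootstrap the three-dimensional counterexample from the previous theorem up to dimension $n\geq 4$ by ``suspending'' it along the first $n-3$ coordinates. Concretely, for $x=(x_1,\ldots,x_n)\in S^{n-1}$, set $R(x)=\sqrt{x_{n-2}^2+x_{n-1}^2+x_n^2}$ and define
\[
f^*(x)=\begin{cases}(x_1,\ldots,x_{n-3},0,0,0), & R(x)=0,\\
\bigl(x_1,\ldots,x_{n-3},\,R(x)\,f\bigl(\tfrac{x_{n-2}}{R(x)},\tfrac{x_{n-1}}{R(x)},\tfrac{x_n}{R(x)}\bigr)\bigr), & R(x)>0,\end{cases}
\]
where $f\colon S^2\to S^2$ is the self-homeomorphism built in Theorem 5.2.1. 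When $R(x)>0$ the argument of $f$ lies on $S^2$, so $|f^*(x)|^2=x_1^2+\cdots+x_{n-3}^2+R(x)^2=1$, confirming $f^*(x)\in S^{n-1}$.

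First I would verify that $f^*$ is a self-homeomorphism of $S^{n-1}$. Bijectivity reduces to bijectivity of $f$ on each ``slice'' of constant $(x_1,\ldots,x_{n-3})$; the inverse is obtained by applying $f^{-1}$ in the last three coordinates after rescaling by $R$. Continuity on $\{R>0\}$ is immediate. The delicate point is continuity on the set $\{R=0\}$: if $x^{(k)}\to x^0$ with $R(x^0)=0$, then $R(x^{(k)})\to 0$ while the $f(\cdot)$ factor stays bounded by $1$ on $S^2$, so the last three entries of $f^*(x^{(k)})$ tend to $0$ and the first $n-3$ entries converge coordinatewise. The same argument applied to $f^{-1}$ shows $(f^*)^{-1}$ is continuous, so $f^*$ is a homeomorphism.

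Next I would exploit the symmetries. Since $f$ is symmetric in the planes $\{x_{n-2}=0\}$ and $\{x_{n-1}=0\}$ in $\mathbb{R}^3$, and the first $n-3$ coordinates of $f^*$ are untouched, $f^*$ is symmetric in every coordinate hyperplane $\{x_i=0\}$, $i=1,\dots,n-1$. Because the Poisson integral is linear and commutes with reflections, the harmonic extension $F^*$ inherits all these symmetries, which forces the first $n-1$ components of $F^*$ to vanish on the $x_n$-axis. Hence $F^*$ maps the segment $\{(0,\ldots,0,x_n):|x_n|<1\}$ into itself, so non-injectivity on $B^n$ can be detected by $F_n^*$ alone on this segment.

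Finally I would reduce to the three-dimensional argument via a limiting function. Let $u^*$ be the harmonic function on $B^n$ with boundary values $-R(x)$ on $\{x_n>0\}\cap S^{n-1}$ and $R(x)$ on $\{x_n<0\}\cap S^{n-1}$. The pointwise-convergence analysis used in Theorem 5.2.1 shows that, as the parameter $p\to\infty$, the last coordinate of $f$ tends to $-1$ on the northern hemisphere of $S^2$ and to $+1$ on the southern hemisphere; multiplying by $R(x)$, the last coordinate of $f^*$ tends to $-R(x)$ and $+R(x)$ respectively on the corresponding hemispheres of $S^{n-1}$. By the Poisson representation and dominated convergence, $F_n^*\to u^*$ pointwise in $B^n$, so for any $x_n\in(0,1)$ and all sufficiently large $p$,
\[
F_n^*(0,\ldots,0,x_n)<0<F_n^*(0,\ldots,0,-x_n),
\]
which violates injectivity. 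The main obstacle I anticipate is the continuity verification on the degenerate set $\{R=0\}$ together with justifying the pointwise limit $F_n^*\to u^*$ well enough to produce a strict inequality at a \emph{finite} value of $p$; once these analytic points are in hand, the symmetry argument and reduction to the three-dimensional construction close the proof.
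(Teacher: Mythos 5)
Your proposal is correct and follows essentially the same route as the paper: suspend the three-dimensional homeomorphism $f$ over the first $n-3$ coordinates via the factor $R(x)$, use the inherited coordinate-hyperplane symmetries to confine $F^*$ to the $x_n$-axis, and detect the fold there from the pointwise limit $F_n^*\to u^*$ as $p\to\infty$. You supply some details the paper leaves implicit (continuity of $f^*$ on $\{R=0\}$, the dominated-convergence justification, and obtaining the strict inequality at a finite $p$), but the underlying argument is the same.
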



\begin{thebibliography}{}
  \bibitem{ahlfors}L.V. Ahlfors, \emph{Complex analysis: An introduction to the theory of analytic functions of one complex variable}, McGraw Hill, (1979).
  \bibitem{prob} D. Bshouty and A. Lyzzaik, \emph{Problems and conjectures in planar harmonic mappings}, J. Analysis, \textbf{18}(2010), 69-81.
 \bibitem{choquet} G. Choquet, \emph{Sur un type de transformation analytique g\'{e}n\'{e}ralisant la r\'{e}presentation conforme et d\'{e}finie au moyen de fonctions harmoniques}, Bull. Sci. Math., \textbf{69}(1945), 156-165.    
  \bibitem{conway} J. B. Conway, \emph{Function theory of one complex variable}, Springer-Verlag, New York, (1995).   
  \bibitem{duren} P. Duren, \emph{Harmonic mappings in the plane}, Cambridge tracts in mathematics, 156 Cambridge univ. press, Cambridge, 2004.
  \bibitem{paul} Paul Garret, \emph{Lecture notes}, Univ. of Minnesota, USA, 2005.
  \bibitem{hor} L. H\"{o}rmander, \emph{An introducntion to complex analysis in several variables}, North Holland, Amsterdam, 1989.
   \bibitem{kneser} H. Kneser, \emph{L\"{o}sung der Aufgabe 41}, Jahresber Deutsch Math.-Verein., \textbf{35}(1926), 123-124.
  \bibitem{stev} Steven G. Krantz, \emph{Function theory of several complex variables}, AMS Chelsea Publishing, Providence, Rhode Island, (1992).
  \bibitem{laug} R. S. Laugesen,  \emph{Injectivity can fail for higher dimensional harmonic extensions}, compl. Vari. Theo. Appl., \textbf{28}(1996) 356-357.
  \bibitem{lewy} H. Lewy, \emph{On the non-vanishing of the Jacobian in certain one to one mappings,} Bull. Amer. Math. Soc., \textbf{42}(1936), 689-692.
  \bibitem{massey} W. S. Massey, \emph{Singular homology theory}, Springer-Verlag, New York, (1980).
  \bibitem{naser} M. Naser, \emph{A generalisation of a theorem of Lewy on harmonic mappings}, Math. Notes Acad. Sci. USSR, \textbf{13}(1973), 113-115.
  \bibitem{nishino} Toshio Nishino, \emph{Function theory in several complex variables}, AMS, Providence, Rhode Island, (2001).
  \bibitem{rado} T. Rad\'{o}, \emph{Aufgabe 41}, Jahresber Deutsche Math.-Verein, \textbf{35}(1926), 49.  
  \bibitem{rudin} W. Rudin, \emph{Function theory in the unit ball of $\mathbb{C}^n$}, Springer-Verlag, New York, (1980).
  \bibitem{wood1} J. C. Wood, \emph{Harmonic mappings between surfaces}, Thesis, Univ. of Warwick (1974).
  \bibitem{wood} J. C. Wood, \emph{Lewy's theorem fails in higher dimensions}, Math. Scand., \textbf{69}(1991), 166.
  
  
 \end{thebibliography}

\begin{thebibliography}{}
\bibitem{ahlfors1}L.V. Ahlfors, \emph{Complex analysis: An introduction to the theory of analytic functions of one complex variable}, McGraw Hill, (1979).
\bibitem{conway1} J. B. Conway, \emph{Function theory of one complex variable}, Springer-Verlag, New York, (1995).
\end{thebibliography}

\begin{thebibliography}{}
\bibitem{conway2} J. B. Conway, \emph{Function theory of one complex variable}, Springer-Verlag, New York, (1995).
\bibitem{paul} Paul Garret, \emph{Lecture notes}, Univ. of Minnesota, USA, 2005.
\bibitem{hor} L. H\"{o}rmander, \emph{An introducntion to complex analysis in several variables}, North Holland, Amsterdam, 1989.
\bibitem{stev} Steven G. Krantz, \emph{Function theory of several complex variables}, AMS Chelsea Publishing, Providence, Rhode Island, (1992).
\bibitem{nishino} Toshio Nishino, \emph{Function theory in several complex variables}, AMS, Providence, Rhode Island, (2001).
\bibitem{rudin} W. Rudin, \emph{Function theory in the unit ball of $\mathbb{C}^n$}, Springer-Verlag, New York, (1980).
\end{thebibliography}

\begin{thebibliography}{}
\bibitem{prob} D. Bshouty and A. Lyzzaik, \emph{Problems and conjectures in planar harmonic mappings}, J. Analysis, \textbf{18}(2010), 69-81.
\bibitem{duren3} P. Duren, \emph{Harmonic mappings in the plane}, Cambridge tracts in mathematics, 156 Cambridge univ. press, Cambridge, 2004.
\bibitem{stev3} Steven G. Krantz, \emph{Function theory of several complex variables}, AMS Chelsea Publishing, Providence, Rhode Island, (1992).
\bibitem{lewy3} H. Lewy, \emph{On the non-vanishing of the Jacobian in certain one to one mappings,} Bull. Amer. Math. Soc., \textbf{42}(1936), 689-692.
\bibitem{naser3} M. Naser, \emph{A generalisation of a theorem of Lewy on harmonic mappings}, Math. Notes Acad. Sci. USSR, \textbf{13}(1973), 113-115.
\bibitem{rudin3} W. Rudin, \emph{Function theory in the unit ball of $\mathbb{C}^n$}, Springer-Verlag, New York, (1980).
\end{thebibliography}

\begin{thebibliography}{}
\bibitem{prob4} D. Bshouty and A. Lyzzaik, \emph{Problems and conjectures in planar harmonic mappings}, J. Analysis, \textbf{18}(2010), 69-81.
\bibitem{choquet4} G. Choquet, \emph{Sur un type de transformation analytique g\'{e}n\'{e}ralisant la r\'{e}presentation conforme et d\'{e}finie au moyen de fonctions harmoniques}, Bull. Sci. Math., \textbf{69}(1945), 156-165.
\bibitem{duren4} P. Duren, \emph{Harmonic mappings in the plane}, Cambridge tracts in mathematics, 156 Cambridge univ. press, Cambridge, 2004.
\bibitem{kneser4} H. Kneser, \emph{L\"{o}sung der Aufgabe 41}, Jahresber Deutsch Math.-Verein., \textbf{35}(1926), 123-124.
\bibitem{stev4} Steven G. Krantz, \emph{Function theory of several complex variables}, AMS Chelsea Publishing, Providence, Rhode Island, (1992).
\bibitem{rado4} T. Rad\'{o}, \emph{Aufgabe 41}, Jahresber Deutsche Math.-Verein, \textbf{35}(1926), 49.
\bibitem{rudin4} W. Rudin, \emph{Function theory in the unit ball of $\mathbb{C}^n$}, Springer-Verlag, New York, (1980).
\end{thebibliography}

\begin{thebibliography}{}
\bibitem{prob5} D. Bshouty and A. Lyzzaik, \emph{Problems and conjectures in planar harmonic mappings}, J. Analysis, \textbf{18}(2010), 69-81.
\bibitem{duren5} P. Duren, \emph{Harmonic mappings in the plane}, Cambridge tracts in mathematics, 156 Cambridge univ. press, Cambridge, 2004.
\bibitem{stev5} Steven G. Krantz, \emph{Function theory of several complex variables}, AMS Chelsea Publishing, Providence, Rhode Island, (1992).
\bibitem{laug5} R. S. Laugesen,  \emph{Injectivity can fail for higher dimensional harmonic extensions}, compl. Vari. Theo. Appl., \textbf{28}(1996) 356-357.
\bibitem{rudin5} W. Rudin, \emph{Function theory in the unit ball of $\mathbb{C}^n$}, Springer-Verlag, New York, (1980).
\bibitem{wood15} J. C. Wood, \emph{Harmonic mappings between surfaces}, Thesis, Univ. of Warwick (1974).
\bibitem{wood5} J. C. Wood, \emph{Lewy's theorem fails in higher dimensions}, Math. Scand., \textbf{69}(1991), 166.
\end{thebibliography}
\end{document}